\title[the one-phase Muskat problem]{Global  well-posedness for the one-phase Muskat problem}
\author{Hongjie Dong}
\address{Division of Applied Mathematics, Brown University, Providence, RI 02912}
\email{hongjie$\_$dong@brown.edu }
\author{Francisco Gancedo}
\address{Departamento de An\'alisis Matem\'atico $\&$ IMUS, Universidad de Sevilla, Sevilla, Spain}
\email{fgancedo@us.es}
\author{Huy Q. Nguyen}
\address{Department of Mathematics, Brown University, Providence, RI 02912}
\email{hnguyen@math.brown.edu}
\newcommand{\bq}{\begin{equation}}
\newcommand{\eq}{\end{equation}}
\newcommand{\bqa}{\begin{eqnarray*}}
\newcommand{\eqa}{\end{eqnarray*}}
\theoremstyle{plain}
\newtheorem{theo}{Theorem}[section]
\newtheorem{prop}[theo]{Proposition}
\newtheorem{lemm}[theo]{Lemma}
\newtheorem{coro}[theo]{Corollary}
\newtheorem{assu}[theo]{Assumption}
\newtheorem{defi}[theo]{Definition}
\theoremstyle{definition}
\newtheorem{rema}[theo]{Remark}
\newtheorem{nota}[theo]{Notation}
\DeclareMathOperator{\Lip}{Lip}
\DeclareMathOperator{\Tr}{Tr}
\DeclareSymbolFont{pletters}{OT1}{cmr}{m}{sl}
\DeclareMathSymbol{s}{\mathalpha}{pletters}{`s}
\def\tt{\theta}
\def\eps{\varepsilon}
\def\na{\nabla}
\def\mez{\frac{1}{2}}
\def\tdm{\frac{3}{2}}
\def\Rr{\mathbb{R}}
\def\T{\mathbb{T}}
\def\Nn{\mathbb{N}}
\def\Zz{\mathbb{Z}}
\def\cF{\mathcal{F}}
\def\cK{\mathcal{K}}
\def\cN{\mathcal{N}}
\def\cO{\mathcal{O}}
\def\p{\partial}
\def\na{\nabla}
\def\wc{\rightharpoonup}
\def\ka{\kappa}
\def\wt{\widetilde}
\def\wh{\widehat}
\def\ol{\overline}
\def\vp{\varphi}
\numberwithin{equation}{section}
\date{today}
\begin{document}
\begin{abstract}
The free boundary problem for a two-dimensional fluid filtered in porous media is studied. This is known as the one-phase Muskat problem and is mathematically equivalent to the vertical Hele-Shaw problem driven by gravity force. We prove that if the initial free boundary is the graph of a periodic Lipschitz function, then there exists a global-in-time Lipschitz solution in the strong $L^\infty_t L^2_x$ sense and it is the unique viscosity solution. The proof requires quantitative estimates for layer potentials and pointwise elliptic regularity in Lipschitz domains. This is the first construction of  unique global strong solutions for the Muskat problem with initial data of arbitrary size.
\end{abstract}

\keywords{}

\noindent\thanks{\em{ MSC Classification:  .}}

\maketitle
\section{Introduction}

In this paper we study the dynamics of a two dimensional incompressible fluid permeating a homogeneous porous medium. This physical phenomena is modeled by the classical Darcy law \cite{Darcy1856}
\bq\label{Darcy}
\mu u(x, y, t)=-\na_{x, y}p(x, y, t)-\rho(0, 1),\quad \na_{x, y}\cdot u(x, y, t)=0,\quad (x, y)\in \Omega_t\subset\Rr^2,~t\in \Rr_+.
\eq
Here $u$ is the fluid velocity,  $p$ is the fluid pressure, and the positive constants $\mu$ and $\rho$ are respectively the dynamic viscosity and  fluid density.  The permeability of the media and the gravitational acceleration have been normalized for the sake of notational simplicity. This problem is known as the one-phase Muskat problem \cite{Muskat1937book} and, interestingly,  is mathematically equivalent to the vertical Hele-Shaw problem driven by gravity. It is a fundamental parabolic free boundary problem in fluid mechanics. We should point out that the horizontal Hele-Shaw problem driven by fluid injection or suction has a different nature and has been studied extensively.

 The fluid occupies a time-dependent domain $\Omega_t\subset \Rr^2$ whose boundary $\Sigma_t$ moves with the fluid
 \bq\label{V:boundary}
 \mathcal{V}(\Sigma_t)=u\cdot n \quad\text{on } \Sigma_t=\partial \Omega_t,
 \eq
where $n$ is the outward pointing unit normal to $\Sigma_t$. Neglecting surface tension, the pressure is continuous across the free boundary
\bq\label{bc:p}
p\vert_{\Sigma_t}=0.
\eq
One is interested in the {\it geometry} and {\it regularity} of the free boundary $\Sigma_t$ as time evolves. Regarding the geometry, there are two distinguished cases: graph and non-graph boundary. For both kinds of geometry, the existence and uniqueness of strong solution for a {\it short time} have been established  even for much more general settings, including multi-phase, with rigid boundaries, with surface tension, nonconstant permeability.  See e.g. \cite{DuchonRobert1984, Chen1993, EscherSimonett1997} and more recent developments in \cite{CordobaGancedo2007, Ambrose2007, CCG2011, CCG2013, Ambrose2014, CG-BS2016, Matioc2019, AlazardLazard2019, AlazardOneFluid2019, NguyenPausader2019, HQNguyen2019, FlynnNguyen2020, AlazardHung22020, AlazardHung32020}.

In this paper we are concerned with long-term dynamics of the Muskat problem. Global existence and uniqueness of solutions have been obtained either when the initial free boundary is the  graph of a small function (in certain function spaces) \cite{SCH2004, CordobaGancedo2007, EscherMatioc2011, CG-BS2016, CGSV2017} or is close to a circle \cite{Chen1993, ConstantinPugh1993,GG-JPS2019Bubble}. There are many recent developments for the former in the direction of low regularity solutions. These include ``medium data''  in the Wiener algebra \cite{CCGS2013, CCGR-PS2016, GG-JPS2019, GG-BS2020}, ``medium data'' in the Lipschitz norm \cite{Cameron2019, Cameron2020}, small data in critical Sobolev spaces allowing for arbitrarily large slopes \cite{CordobaLazar2018, GancedoLazar2020, AlazardHung22020} and even infinite slope \cite{AlazardHung32020}. We note that these norms are scaling invariant for the setting considered.

The geometry has an interesting interplay with the regularity. It is known that for the two-phase problem (two fluids separated by a free interface), some initial graph interfaces with large slopes can turn over, passing from a stable regime to an unstable one \cite{CCFGL-F2012}.  Subsequently the solutions lose regularity in finite time \cite{CCFG2013}. On the other hand, there exist solutions that shift stability regime from stable to unstable and then return back to stable \cite{CG-SZ2017}. Particles on the free boundary cannot collide along a smooth curve (splat singularity) for both the two-phase \cite{CordobaGancedo2010} and one-phase problems \cite{CordobaPernas-Castano2017}. However, the one-phase problem is more singular in the sense that it can develop splash singularity \cite{CCFG2016} from some {\it non-graph} initial boundary, while the two-phase problem cannot \cite{GancedoStrain2014}. Here, splash singularity occurs when two particles collide at a single point on the free boundary while the boundary remains regular. The  remaining less well-understood scenario is the one-phase problem starting from {\it graph initial boundaries}.  Then, it is known that in stark contrast to its two-phase counterpart, the free boundary of the one-phase problem  cannot turn over. Two fundamental questions for this scenario are: 1) Does there exist a unique global solution? 2) If yes, what is its long-term regularity? 

In this paper we affirmatively answer the first question. More precisely, we prove that if the initial free boundary is the graph of a periodic Lipschitz function, then there exists a global Lipschitz solution in the strong $L^\infty_t L^2_x$ sense, and hence almost everywhere. Moreover, it is the unique viscosity solution \cite{CrandallLions}. In order to establish this result, we first show that the graph free boundary of the one-phase Muskat problem obeys a nonlinear integro-differential equation.

\begin{prop}\label{prop:reform}
Assume that the fluid domain is given by
\[
\Omega_t=\{(x, y)\in \Rr^2,\quad y<f(x, t)\}
\]
for some  function $f(x, t): \Rr\times [0, T]\to \Rr$ that is $2\pi$ periodic in $x$. Then, $f$ obeys the equation
\bq\label{reform:f}
\p_t f=-\ka G(f)f,\quad \ka=\frac{\rho}{\mu}.
\eq
Here, for $f, g: \T\equiv \Rr/2\pi\Zz\to \Rr$ the Dirichlet-Neumann operator $G(f)g$ is given by
\bq
\label{reform:Gfg}
G(f)g=\frac{1}{4\pi}p.v.\int_{\T}\frac{\sin(x\!-\!x')\!+\!\sinh(f(x)\!-\!f(x'))\p_xf(x)}{\cosh(f(x)\!-\!f(x'))\!-\!\cos(x\!-\!x')}\tt(x')dx',
\eq
where $\tt:\T\to \Rr$ satisfies
\bq\label{reform:tt}
\mez \tt(x)+\frac{1}{2\pi} p.v.\int_{\T}\frac{\sinh(f(x)\!-\!f(x'))\!-\!\sin(x\!-\!x')\p_xf(x)}{\cosh(f(x)\!-\!f(x'))\!-\!\cos(x\!-\!x')}\tt(x')dx'=\partial_x g(x).
\eq
\end{prop}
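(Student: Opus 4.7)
The plan is to reduce (\ref{Darcy})--(\ref{bc:p}) to a scalar evolution equation for the height $f$ by expressing the normal velocity on the interface as a Dirichlet-to-Neumann operator and then representing that operator via layer potentials. The kinematic boundary condition (\ref{V:boundary}), together with differentiating the graph equation $y - f(x,t) = 0$ along $\Sigma_t$, gives
\[
\p_t f = u_2 - (\p_x f) u_1 = \sqrt{1+(\p_x f)^2}\, u\cdot n\big\vert_{\Sigma_t},
\]
with upward unit normal $n = (-\p_x f, 1)/\sqrt{1+(\p_x f)^2}$. To eliminate $u$ and $p$, introduce the modified pressure $P := p + \rho y$, so that $\mu u = -\na P$ and $\Delta P = 0$ in $\Omega_t$; the free-boundary condition (\ref{bc:p}) reads $P\vert_{\Sigma_t} = \rho f$, and the normalization $u \to 0$ as $y\to -\infty$ forces $P \to 0$ at depth. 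Defining the Dirichlet-to-Neumann operator of $\Omega_t$ by $G(f)g := \sqrt{1+(\p_x f)^2}\,\p_n \widetilde g\vert_{\Sigma_t}$, where $\widetilde g$ is the bounded harmonic extension of $g$ into $\Omega_t$, the two steps combine to give (\ref{reform:f}):
\[
\p_t f = -\tfrac{1}{\mu}\sqrt{1+(\p_x f)^2}\,\p_n P = -\ka\, G(f) f.
\]

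The explicit formulas (\ref{reform:Gfg})--(\ref{reform:tt}) come from classical layer-potential calculus on the periodic strip $\T\times\Rr$, based on the periodic Newtonian potential
\[
E(x,y) = \tfrac{1}{4\pi}\log(\cosh y - \cos x),
\]
whose gradient $\p_x E = \tfrac{1}{4\pi}\tfrac{\sin x}{\cosh y - \cos x}$, $\p_y E = \tfrac{1}{4\pi}\tfrac{\sinh y}{\cosh y - \cos x}$ gives precisely the rational trigonometric kernels appearing in the proposition. I would represent either $\widetilde g$ itself or, equivalently, its harmonic conjugate / stream function $\widetilde\Psi$, as a single-layer potential with density $\theta$ parametrized by $x'\in\T$,
\[
\widetilde\Psi(X) = \int_\T E\bigl(X - (x', f(x'))\bigr)\,\theta(x')\,dx'.
\]
Projecting $\na\widetilde\Psi$ at the base point $X = (x, f(x))$ onto the unit tangent $\tau(x) = (1, \p_x f(x))/\sqrt{1+(\p_x f)^2}$ yields the principal-value integral whose numerator is $\sin(x-x') + \p_x f(x)\sinh(f(x)-f(x'))$, which is exactly the kernel of (\ref{reform:Gfg}); projecting onto the normal $n(x)$ yields the complementary kernel with numerator $\sinh(f(x)-f(x')) - \p_x f(x)\sin(x-x')$. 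Because the tangential derivative of a single-layer is continuous across $\Sigma_t$ while the normal derivative carries the classical $\pm\mez\theta$ jump, and because the Cauchy--Riemann / trace identity relates $\theta$ to $\p_x g$, these pieces assemble into the second-kind Fredholm integral equation (\ref{reform:tt}).

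The main obstacle is that $f$ is merely Lipschitz, so $n$ is defined only almost everywhere and the jump relations and principal-value integrals have to be interpreted nontangentially in the $L^2$ sense of Coifman--McIntosh--Meyer. This is classical layer-potential theory on Lipschitz domains and is developed in detail later in the paper for the well-posedness analysis; for Proposition \ref{prop:reform} it suffices to apply those results as a black box. What remains is a direct algebraic verification that differentiating $E$ and substituting the graph parametrization $Y = (x', f(x'))$ produces exactly the kernels in (\ref{reform:Gfg})--(\ref{reform:tt}), with the correct signs and prefactors.
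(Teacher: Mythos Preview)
Your proposal is correct and follows essentially the same two-step route as the paper: first reduce to $\p_t f=-\ka G(f)f$ via the hydraulic head $P=p+\rho y$ (this is Proposition~\ref{prop:reformDN}), then express $G(f)g$ through layer potentials with the periodic Newtonian kernel $E(x,y)=\tfrac{1}{4\pi}\ln(\cosh y-\cos x)$ (this is Proposition~\ref{prop:DNcontour}).

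The one genuine difference in presentation is how the layer-potential part is set up. You propose to write the \emph{stream function} $\widetilde\Psi$ (the harmonic conjugate of the extension $\phi$) as a single-layer potential $S[f]\theta$, and then read off \eqref{reform:Gfg} as its tangential trace and \eqref{reform:tt} from the normal-derivative jump combined with Cauchy--Riemann. The paper instead represents $\phi$ itself as a \emph{double}-layer potential $\phi=\mathcal K\Theta$ with $\Theta=(\tfrac12 I+K)^{-1}g$, sets $\theta=\p_x\Theta$, and uses the algebraic identity $\p_x K[f]\Theta=-K^*[f]\theta$ to obtain $\theta=(\tfrac12 I-K^*)^{-1}\p_xg$; the formula \eqref{reform:Gfg} then drops out as $T\cdot\nabla S[f]\theta$, invoking Verchota's continuity of the tangential derivative of the single layer. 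The two routes are equivalent in 2D (your Cauchy--Riemann step is exactly what underlies the paper's identity), but the paper's choice has the advantage that the existence of the double-layer representation on a Lipschitz graph is directly furnished by the invertibility of $\tfrac12 I+K$ on $H^1(\T)$ from \cite{Verchota1984}, whereas your stream-function ansatz would require a separate argument that the harmonic conjugate admits a single-layer representation in the unbounded periodic half-plane.
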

We shall prove using deep results from  layer potential theory for $C^1$ and Lipschitz domains \cite{Fabes1978, Verchota1984} that $G(f)g$ is well-defined in $L^2(\T)$ with a quantitative bound, provided that $f\in W^{1, \infty}(\T)$ and $g\in H^1(\T)$. Our main result is stated as follows.
\begin{theo}\label{theo:main}
For all  $f_0\in W^{1, \infty}(\T)$, there exists
\[
f\in C(\T\times [0, \infty))\cap L^\infty([0, \infty); W^{1, \infty}(\T)),\quad \p_t f\in L^\infty([0, \infty); L^2(\T))
\]
such that  $f\vert_{t=0}=f_0$, $f$ satisfies \eqref{reform:f} in $L^\infty_tL^2_x$, and
\[
\| f(t)\|_{W^{1, \infty}(\T)}\le \| f_0\|_{W^{1, \infty}(\T)}\quad\text{a.e. } t>0.
\]
Moreover, $f$ is the unique viscosity solution of \eqref{reform:f} and is stable in $L^\infty(\T)$.
\end{theo}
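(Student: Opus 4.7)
The plan is to construct $f$ as a limit of smooth solutions obtained from mollified data, exploiting three pillars: a global $W^{1,\infty}$ a priori bound coming from a sliding maximum principle, uniform equicontinuity to extract a compactness limit, and viscosity theory to identify the limit uniquely. Concretely I would (i) set $f_0^\eps = f_0 * \rho_\eps$ for a standard periodic mollifier; (ii) invoke the known short-time $H^s$ well-posedness of \eqref{reform:f} (say $s>2$), together with Proposition \ref{prop:reform}, to obtain smooth solutions $f^\eps$ on maximal intervals $[0,T_\eps)$; (iii) propagate the Lipschitz bound $\|f^\eps(t)\|_{W^{1,\infty}(\T)}\le \|f_0\|_{W^{1,\infty}(\T)}$, which forces $T_\eps=\infty$; (iv) extract a limit $f^\eps\to f$; (v) identify this limit as the unique viscosity solution.

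The a priori bound is driven by the translation invariance of \eqref{reform:f}: since $-\ka G(f)f$ depends on $f$ only through its graph, $f^\eps(\cdot+h,t)+c$ is again a solution for every $h,c\in\Rr$. The central step is a comparison principle for smooth solutions: if $f_1(\cdot,0)\le f_2(\cdot,0)$ then $f_1(\cdot,t)\le f_2(\cdot,t)$. At a first touch point $x_0$, the fluid domain of $f_1$ lies inside that of $f_2$ and the two graphs share a tangent at $(x_0,f_1(x_0,t_0))$; a Hopf-type sign statement for the Dirichlet--Neumann operator on such nested Lipschitz domains then forces $\p_t(f_1-f_2)(x_0,t_0)\le 0$. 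Applied to $f^\eps(\cdot,t)$ versus $f^\eps(\cdot+h,t)$ with a vertical shift, this gives $\|f^\eps(\cdot+h,t)-f^\eps(\cdot,t)\|_{L^\infty_x}\le |h|\,\|\p_x f_0\|_{L^\infty}$; dividing by $|h|$ and sending $h\to 0$ yields the Lipschitz bound. A maximum/minimum principle for $f^\eps$ itself controls $\|f^\eps\|_{L^\infty_{t,x}}$.

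Armed with $\|f^\eps\|_{L^\infty_t W^{1,\infty}_x}\le C$, the quantitative $L^2$ estimate on $G(f)f$ announced after Proposition \ref{prop:reform} yields $\|\p_t f^\eps\|_{L^\infty_t L^2_x}\le C$, hence $f^\eps$ is $1/2$-H\"older in $t$ uniformly on $[0,T]$ for each $T$. Arzel\`a--Ascoli furnishes $f^\eps\to f$ uniformly on compact subsets of $\T\times[0,\infty)$, with $f\in L^\infty([0,\infty);W^{1,\infty}(\T))$ and $\p_t f\in L^\infty([0,\infty);L^2(\T))$ by weak-$*$ lower semicontinuity. The technical heart of this step is showing $G(f^\eps)f^\eps\rightharpoonup G(f)f$ in $L^2$: it reduces to continuity of the map $f\mapsto (G(f)f,\tt)$ from $W^{1,\infty}$ (uniform topology) into $L^2(\T)$, which I would obtain from the Verchota--Fabes layer potential framework. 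The density $\tt^\eps$ solves the second-kind integral equation \eqref{reform:tt} whose kernel depends on $f^\eps$ continuously in the operator norm on $L^2(\T)$, so invertibility stability gives $\tt^\eps\to\tt$ in $L^2$, after which \eqref{reform:Gfg} passes to the limit.

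Finally, I would identify the strong solution as a viscosity solution of the nonlocal parabolic equation \eqref{reform:f} and invoke the Crandall--Lions comparison principle in its integro-differential extension to obtain uniqueness and $L^\infty$ stability, $\|f_1-f_2\|_{L^\infty_{t,x}}\le \|f_1(\cdot,0)-f_2(\cdot,0)\|_{L^\infty_x}$; the vertical translation invariance again reduces stability to comparison. The hardest part will be the sliding comparison principle at the level of smooth solutions: although translation invariance reduces matters to a one-sided inequality at a touch point, the operator $G(f)f$ is an order-one nonlocal operator whose kernel \emph{depends on the whole curve}, so the contribution far from the touch point must be handled carefully using the layer potential representation, rather than from a purely local pointwise argument. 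A second subtle point is the continuity of the inverse of $\mez I + K(f)$ in \eqref{reform:tt} under only $W^{1,\infty}$-convergence of $f$, which requires exploiting the $L^2$ boundedness and Fredholm theory of Cauchy-type operators on Lipschitz graphs.
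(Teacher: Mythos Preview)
Your outline has the right architecture, but step (iii) hides a genuine gap that is in fact the main obstacle the paper was written to overcome. You assert that the Lipschitz a priori bound ``forces $T_\eps=\infty$'' for the smooth solutions $f^\eps$ of the \emph{unregularized} equation \eqref{reform:f}. This does not follow. Local well-posedness is in $H^s$ with $s>2$, and the blow-up criterion is in $H^s$; a uniform $W^{1,\infty}$ bound does not by itself control $\|f^\eps(t)\|_{H^s}$. Indeed, if one could propagate $H^s$ regularity from a Lipschitz bound alone, the theorem would be immediate and the paper unnecessary. The paper avoids this by adding a parabolic term and solving instead $\p_t f^\eps=-\ka G(f^\eps)f^\eps+\eps\,\p_x^2 f^\eps$. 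The viscosity does two things: it preserves the comparison principle (hence the Lipschitz bound), and it supplies the dissipation $\eps\|\p_x^3 f^\eps\|_{L^2}^2$ needed to close an $H^2$ energy estimate. The latter requires a delicate bound of the form $\|\p_x[G(f)f]\|_{L^2}\le \nu\|\p_x^3 f\|_{L^2}+\nu^{-1}\cF(\|f\|_{W^{1,\infty}})(\|\p_x^2 f\|_{L^2}+\|\p_x^2 f\|_{L^2}^2)$ for arbitrary $\nu>0$, obtained by a careful splitting of the singular integral and a quantitative bound on $(\tfrac12 I-K^*[f])^{-1}$; the small parameter $\nu$ is then absorbed by the $\eps$-dissipation. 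Without the viscous term there is nothing to absorb it into.

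Two smaller points. First, the passage to the limit in $G(f^\eps)f^\eps$ is not done in the paper via strong $L^2$ convergence of $\tt^\eps$ coming from operator-norm continuity of $(\tfrac12 I-K^*[f])^{-1}$ in $f$; such continuity under mere uniform convergence of $f$ is not established. Instead the paper uses only weak-$*$ convergence of $\tt^\eps$ together with a small/large scale decomposition of the kernels, exploiting that the integrands are uniformly bounded away from the diagonal and that the near-diagonal contribution is $O(\delta)$. Second, you plan to ``invoke'' a Crandall--Lions comparison principle for viscosity solutions, but no off-the-shelf result applies here: the operator $f\mapsto G(f)f$ is nonlocal with a kernel depending on the solution through an implicit integral equation. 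The paper proves consistency of viscosity solutions at $C^{1,1}$ points by first establishing a pointwise $C^{1,\alpha}$ estimate for harmonic functions in Lipschitz domains (so that the harmonic extension is $C^{1,\alpha}$ at the touching point), and then reducing to an explicit Dirichlet--Neumann formula on an interior tangent disk. This is a substantial ingredient that your outline does not address.
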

\begin{rema}
We shall prove in Theorem \ref{theo:comparison:viscosity} that viscosity solutions of \eqref{reform:f} obey the comparison principle. Consequently, every modulus of continuity of $f_0$ is preserved by $f(t)$ for all $t> 0$.
\end{rema}
It turns out that  sufficiently smooth solutions of the one-phase Muskat problem obey the comparison principle. See  Proposition \ref{comparison:Muskatr} and  \cite{AlazardOneFluid2019}. As a consequence, so long as the free boundary remains to be a graph, its slope is bounded by the initial slope. Thus starting from a Lipschitz graph, any sufficiently smooth solution must be a graph.

 The comparison principle has been discovered for other free boundary problems including the  Stefan problem \cite{ACS1996, Kim2003, Kim2011} and the horizontal Hele-Shaw problem \cite{Caffarelli3, Kim2003,  Guillen2019}. Motivated by the fact that these models may develop  singularities in finite time,  the notion of viscosity solutions, introduced by Crandall and Lions \cite{CrandallLions}, has been employed to construct global  solutions past singularities. In the aforementioned works,  viscosity solutions for the unknown scalars  are merely continuous and so are their zero level sets--the free boundary. Consequently, the dynamics of the free boundary is not satisfied in the classical sense (pointwise). This is also the case for variational weak solutions constructed in \cite{Elliott1981, Kamenomostskaja1961}.  Higher regularity of weak (viscosity or variational) solutions  to these problems has been studied in \cite{ACS1996, ChoiJerisonKim2007, ChoiJerisonKim2009}. In particular, for the horizontal one-phase Hele-Shaw problem, the authors in \cite{ChoiJerisonKim2007} proved that if the initial Lipschitz norm of the free boundary is small, then for a short time the unique viscosity solution satisfies the equations pointwise. We also mention that by using the methodology of  convex integration, non-unique weak solutions have been constructed in the purely unstable \cite{Forster2018, CFM2019, Mengual2020} and partially unstable scenarios \cite{CFM2021}.  

As far as the Muskat problem is concerned, to the best of our knowledge there has not been any  global well-posedness result for initial data of arbitrary size, either for weak or strong solutions. We should point out that  for the two-phase problem, the authors in \cite{Lin2017}  proved the existence (without uniqueness) of global weak solutions that are monotone in $\Rr$. 
 In order to obtain viscosity solutions that satisfy the equation almost everywhere, one cannot appeal to the Perron method as in the aforementioned works. Instead, we construct solutions by the vanishing viscosity approach. More precisely, for small $\eps>0$ we consider the approximate equation
\bq\label{intro:eqapp}
\p_tf^\eps=-\ka G(f^\eps)f^\eps+\eps \p_x^2 f^\eps.
\eq
The added viscous term $\eps \p_x^2 f^\eps$ retains the comparison principle for smooth solutions. The first difficulty is to establish  global regularity for \eqref{intro:eqapp} with large data.  This is done in Proposition \ref{GlobalEpsilonProblem} via the layer potential representation \eqref{reform:Gfg}-\eqref{reform:tt} of the Dirichlet-Neumann operator $G(f)g$ and a
careful decomposition of its singular and non-singular parts. An important ingredient  in its proof is the $L^2$ bound
\bq\label{intro:L2DN}
\| G(f)g\|_{L^2(\T)}\le C(1+\| f\|_{\Lip(\T)})^2\|\partial_xg\|_{L^2(\T)},
\eq
where the slope $\| f\|_{\Lip(\T)}$ is controlled for all time. The proof of \eqref{intro:L2DN} relies on a quantitative bound for the inverse $(\mez I-K^*[f])^{-1}$, where $K[f]$ is the boundary double layer potential for the fluid domain  $\{y<f(x)\}$. This is established in Section \ref{Section:potential}. Under the $C^{2, \alpha}$ regularity condition for the boundary, a similar bound was obtained \cite{CCG2011}. The quantitative bound is also crucial to the uniform $L^2$ bound for the solution $\tt^\eps$ of \eqref{reform:tt} with $f=f^\eps$. Finally, from the uniform Lipschitz bound for $f^\eps$ and the uniform $L^2$ bound for $\tt^\eps$, we are able to pass to the limit $\eps \to 0$ in  equations \eqref{reform:f}-\eqref{reform:Gfg} and \eqref{reform:tt} using a decomposition into small and large scales inspired by \cite{CCGS2013, CCGR-PS2016}. We thus obtain a global Lipschitz  solution in the strong $L^\infty_t L^2_x$ sense.

An advantage of the viscosity regularization \eqref{intro:eqapp} is that one can show, in a direct manner, that the constructed solution $f$ is also a viscosity solution (see Definition \ref{def:viscosity}). The proof of this makes use of the contraction estimate for the Dirichlet-Neumann operator associated to two different domains \cite{NguyenPausader2019}. In order to obtain the uniqueness of viscosity solutions we prove that they obey the comparison principle. By the sup and inf-convolution technique, this reduces to proving the consistency of viscosity solutions. That is, if a viscosity solution is smooth ($C^{1,1}$) at a point $(x_0, t_0)$ then it satisfies equation \eqref{reform:f} classically at the same point.  It is known that the harmonic extension $\phi$ of $f$ to the domain $\{y<f(x)\}$ is $C^1$ at the boundary point $(x_0, f(x_0))$ in all nontangential directions (see, for instance, \cite[Lemma 11.17]{CaffarelliSalsa2005}), so that $G(f)f$ is classically well-defined at $x_0$. However, without min-max formulas available at hand as in \cite{CaffarelliSilvestre2009, Silvestre2011, Guillen2019}, the $C^1$ regularity seems insufficient. Here, we prove that in two dimensions, the harmonic extension $\phi$ is in fact $C^{1, \alpha}$ at $(x_0, f(x_0))$ with quantitative estimates (see Theorem \ref{thm1} and Corollary \ref{coro:pointwiseelliptic}). This pointwise $C^{1, \alpha}$ regularity for harmonic functions in Lipschitz domains is of independent interest. It allows us to reduce to the Dirichlet-Neumann operator for an interior disk tangent to $\{y=f(x)\}$ at $(x_0, f(x_0))$, so that an explicit integral formula can be used (see Proposition \ref{prop:disk}) to conclude the consistency of viscosity solutions.

 \section{The Dirichlet-Neumann operator}
\begin{nota}
For any  set $\cO\in \Rr^d$, we denote
\[
\Lip(\cO)=\Big\{g: \cO\to \Rr: \| g\|_{\Lip(\cO)}:=\sup_{x, y\in \cO, x\ne y}\frac{|g(x)-g(y)|}{|x-y|}<\infty\Big\}.
\]
For $\alpha\in (0, 1]$ we say that $g$ is $C^{1, \alpha}$ at $x_0\in \overline{\cO}$ if there exists $v\in \Rr^3$ and positive numbers $M$ and $\gamma$ such that
\bq\label{def:C1apha}
|g(x)-g(x_0)-v\cdot (x-x_0)|\le M|x-x_0|^{1+\alpha}\quad\forall x\in \Omega,~|x-x_0|< \gamma.
\eq
If $x_0$ is an interior point of  $\cO$ then \eqref{def:C1apha} implies that $\na g(x)$ exists and equals $v$.
\end{nota}
\begin{nota}
For $f:\T^d\to \Rr$ we denote
\begin{align}
&\Omega_f=\{(x, f(x)): x\in \T^d,~y<f(x)\},\quad \Sigma=\{(x, f(x)): x\in \T^d\},\\
& N(x)=(-\p_xf(x), 1),\quad n(x)=\frac{N(x)}{|N(x)|}.
\end{align}
For $\phi:\Omega_f\to \Rr$ and $z_0\in \Sigma$ we denote by
\bq\label{Nlimit}
\lim_ {z\to_N z_0}\phi(z)
\eq
the limit of $\phi$ when $\Omega_f\ni z\to z_0$ in the direction of $N$.
\end{nota}
\begin{nota}
For $x_0\in \Rr^m$, $m\ge 1$  we denote the ball of radius $r$ centered at $x_0$ by $B_r(x_0)$. When $x_0=0$, we shall write $B_r=B_r(0)$. 
\end{nota}
\subsection{Definition and global properties}
\begin{defi}
For $f:\T^d\to \Rr$, the Dirichlet-Neumann operator $G(f)$ is defined by
\bq\label{def:Gh}
\big(G(f)g\big)(x)=\p_N\phi(x, f(x)):=\lim_{h\to 0^-}\frac{1}{h}\big[\phi\big((x, f(x))+hN(x)\big)-\phi(x, f(x))\big],
\eq
where $\phi(x, y)$ solves the elliptic problem
\bq\label{elliptic:G}
\begin{cases}
\Delta_{x, y}\phi=0\quad\text{in}~\Omega_f,\\
\phi(x, f(x))=g(x),\quad \nabla_{x, y}\phi\in L^2(\Omega_f).
\end{cases}
\eq
\end{defi}
When $f$ and $g$ are time-dependent, we  write
\bq\label{nota:DNt}
\big(G(f)g\big)(x, t)\equiv \big(G\big(f(t)\big)g(t)\big)(x).
\eq
The one-phase Muskat problem has a compact reformulation in terms of the Dirichlet-Neumann operator.
\begin{prop}\label{prop:reformDN}
If the fluid domain is given by $\Omega_f$ for some $f(x, t):\Rr\times (0, T)\to \Rr$, then $f$ satisfies
\bq\label{Muskat:DN}
\p_tf=-\ka G(f)f\quad\text{on } (0, T),
\eq
where $\ka={\rho}/{\mu}$.
\end{prop}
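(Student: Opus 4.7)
The plan is to reformulate Darcy's law as a Laplace problem for a single harmonic potential and then read the evolution of $f$ directly off the Dirichlet-Neumann operator acting on (a multiple of) $f$ itself. \emph{Step 1 (harmonic potential):} Absorb gravity by setting $\tilde p(x,y,t) = p(x,y,t)+\rho y$; Darcy's law \eqref{Darcy} rewrites as $\mu u = -\nabla_{x,y}\tilde p$, and incompressibility then gives $\Delta_{x,y}\tilde p = 0$ in $\Omega_t = \Omega_{f(t)}$. The pressure condition \eqref{bc:p} becomes the Dirichlet datum $\tilde p(x,f(x,t),t)=\rho f(x,t)$, and together with the natural finite-energy condition $\nabla_{x,y}\tilde p\in L^2(\Omega_f)$, $\tilde p(\cdot,\cdot,t)$ coincides with the solution of \eqref{elliptic:G} with boundary value $g=\rho f(t)$.

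\emph{Step 2 (kinematic condition in graph form):} Writing $\Sigma_t$ as the zero set of $F(x,y,t)=y-f(x,t)$, one has $\nabla_{x,y}F = N = (-\partial_x f,1)$, and the kinematic boundary condition \eqref{V:boundary} is equivalent to $\partial_t F+u\cdot\nabla_{x,y}F=0$ on $\Sigma_t$; expanding this identity at the surface gives
\[
\partial_t f(x,t) = u(x,f(x,t),t)\cdot N(x,t),\qquad N(x,t)=(-\partial_x f(x,t),1).
\]
\emph{Step 3 (identification):} Substituting $u=-\mu^{-1}\nabla_{x,y}\tilde p$ into the above, using linearity of the harmonic extension, and invoking the definition \eqref{def:Gh} of $G(f)$ (which, for smooth $\phi$, coincides with the classical normal derivative $\nabla_{x,y}\phi\cdot N$ at $y=f(x)$),
\[
\partial_t f = -\frac{1}{\mu}\nabla_{x,y}\tilde p\cdot N\big|_{y=f(x,t)} = -\frac{1}{\mu}G(f(t))(\rho f(t)) = -\kappa\,G(f(t))\,f(t),
\]
which is exactly \eqref{Muskat:DN}.

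The derivation is essentially algebraic once the harmonic reformulation is granted, so this proposition does not contain the analytic core of the paper. The only mild care needed is the normalization $\nabla_{x,y}\tilde p\in L^2(\Omega_f)$, which singles out the correct harmonic extension in the unbounded domain $\Omega_f$ and corresponds to the physical decay of the pressure at $y\to-\infty$ in the horizontally periodic setting; all the serious analytic work — bounding and inverting the layer-potential representation \eqref{reform:Gfg}-\eqref{reform:tt} of $G(f)$ on Lipschitz domains — is postponed to later sections.
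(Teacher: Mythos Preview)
Your proof is correct and follows essentially the same approach as the paper: introduce the hydraulic head $\tilde p=p+\rho y$ (the paper calls it $q$), observe it is harmonic with Dirichlet trace $\rho f$, and combine Darcy's law with the graph form of the kinematic condition to identify $\partial_t f$ with $-\kappa G(f)f$. Your level-set derivation of the kinematic condition and your remark on the $L^2$ normalization of $\nabla\tilde p$ are slight elaborations, but the argument is the same.
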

This reformulation has been exploited in \cite{AlazardOneFluid2019, NguyenPausader2019, HQNguyen2019, FlynnNguyen2020}. We recall its proof for completeness.
\begin{proof}
Let $q=p+\rho y$ denote the `hydraulic head'. From Darcy's law \eqref{Darcy} we have $\mu u=-\na_{x, y}q$ and $\Delta_{x, y}q=0$ in $\Omega_f$. Moreover, \eqref{bc:p} implies $q(x, f(x))=\rho f(x)$. By the definition of the Dirichlet-Neumann operator, at any fixed time we have
\[
\rho \big(G(f)f\big)(x)=\big(G(f)(\rho f)\big)(x)=N(x)\cdot \nabla_{x, y} q(x, f(x))=-\mu N(x)\cdot u(x, f(x)).
\]
For graph boundary, \eqref{V:boundary} yields
\[
\p_t f(x, t)= N(x, t)\cdot u(f(x, t), t)=-\frac{\rho}{\mu}\big(G(f)f\big)(x, t),
\]
which finishes the proof of \eqref{Muskat:DN}.
\end{proof}
In order to define $G(f)g$ we first study the well-posedness of the elliptic problem \eqref{elliptic:G}. It was proved in Proposition 3.6  \cite{NguyenPausader2019} that \eqref{elliptic:G} has a unique variational solution when $f\in \Lip(\Rr^d)$ and $g\in \dot H^\mez(\Rr^d)$. Of course, this is  only nonstandard when the domain is unbounded.  Adaption to horizontally periodic domains is straightforward. To fix notation for later purposes, we prove
\begin{prop}\label{prop:elliptic}
Let $f\in \Lip(\T^d)$ and $g\in \dot H^\mez(\T^d)$. Then there exists a unique variational solution $\phi\in \dot H^1(\Omega_f)$ to \eqref{elliptic:G},
where
\bq\label{def:dotH1}
\dot H^1(\Omega_f)=\{u\in L^1_{\text{loc}}(\Omega_f): \na_{x, y}u\in L^2(\Omega_f)\}/\Rr
\eq
endowed with the norm $\| u\|_{\dot H^1(\Omega_f)}=\| \na u\|_{L^2(\Omega_f)}$. Moreover, $\phi$ satisfies
\bq\label{est:vari}
\| \phi\|_{\dot H^1(\Omega_f)}\le C(1+\| f\|_{\Lip(\T^d)})\| g\|_{\dot H^\mez(\T^d)}.
\eq
We shall refer to the solution $\phi$ of \eqref{elliptic:G} as {\it the} harmonic extension of $g$ to $\Omega_f$.
\end{prop}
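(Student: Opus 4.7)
The plan is to write $\phi=G+u$, where $G$ is an explicit lift of $g$ to $\Omega_f$ with controlled $\dot H^1$ norm and $u$ is a Lax--Milgram solution with homogeneous Dirichlet trace on $\Sigma$.

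\textbf{Step 1: Lifting the boundary data.} I flatten $\Omega_f$ via the bi-Lipschitz diffeomorphism $\Psi:\Omega_f\to S:=\T^d\times(-\infty,0)$, $\Psi(x,y)=(x,y-f(x))$, whose Jacobian determinant is identically $1$. On the half-cylinder $S$ take the Poisson extension
\[
\tilde g(x,z)=\sum_{k\in\Zz^d\setminus\{0\}}\hat g(k)\,e^{|k|z}\,e^{ik\cdot x},
\]
which is harmonic in $S$, has trace $g$ (modulo constants) at $z=0$, and by Plancherel satisfies $\|\na_{x,z}\tilde g\|_{L^2(S)}\le C\|g\|_{\dot H^\mez(\T^d)}$. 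Set $G=\tilde g\circ\Psi$. The chain rule yields $\p_{x_j}G=(\p_{x_j}\tilde g-\p_z\tilde g\,\p_{x_j}f)\circ\Psi$ and $\p_y G=\p_z\tilde g\circ\Psi$, whence
\[
\|\na G\|_{L^2(\Omega_f)}\le\big(1+\|f\|_{\Lip(\T^d)}\big)\|\na\tilde g\|_{L^2(S)}\le C\big(1+\|f\|_{\Lip(\T^d)}\big)\|g\|_{\dot H^\mez(\T^d)}.
\]

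\textbf{Step 2: Homogeneous Dirichlet problem via Lax--Milgram.} Let $\dot H^1_0(\Omega_f)$ denote the subspace of $\dot H^1(\Omega_f)$ consisting of classes whose trace on $\Sigma$ vanishes, equivalently the closure of $C_c^\infty(\Omega_f)$ in the seminorm $\|\na\cdot\|_{L^2}$ (well-defined since $f$ is Lipschitz). The bilinear form $a(u,v)=\int_{\Omega_f}\na u\cdot\na v$ is coercive and continuous on $\dot H^1_0$ by definition of the norm, and $\ell(v)=-\int_{\Omega_f}\na G\cdot\na v$ is bounded on $\dot H^1_0$ with $\|\ell\|\le\|\na G\|_{L^2}$. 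Lax--Milgram delivers a unique $u\in\dot H^1_0(\Omega_f)$ with $a(u,v)=\ell(v)$ for every $v\in\dot H^1_0$, and $\|\na u\|_{L^2}\le\|\na G\|_{L^2}$.

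\textbf{Step 3: Conclusion and uniqueness.} I then set $\phi:=G+u\in\dot H^1(\Omega_f)$. By construction $\phi|_\Sigma=g$ in the trace sense and $\int_{\Omega_f}\na\phi\cdot\na v=0$ for every $v\in\dot H^1_0(\Omega_f)$, so $\phi$ is a variational solution of \eqref{elliptic:G}. The bound \eqref{est:vari} follows from the triangle inequality together with the estimates of Steps 1 and 2. For uniqueness, if $\phi_1,\phi_2$ are two variational solutions then $w:=\phi_1-\phi_2\in\dot H^1_0(\Omega_f)$ is weakly harmonic; testing the equation against $w$ (after approximating by $C_c^\infty$ functions) yields $\|\na w\|_{L^2}^2=0$, so $w$ is constant and therefore zero in the quotient $\dot H^1(\Omega_f)$.

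The only genuinely delicate point is rigorously defining the boundary trace and identifying $\dot H^1_0(\Omega_f)$ with the vanishing-trace subspace in the unbounded periodic Lipschitz domain $\Omega_f$. These issues reduce via $\Psi$ to standard Sobolev trace results on bounded Lipschitz sets periodized in $x$; the full-space analogue was already carried out in \cite{NguyenPausader2019}, and the present adaptation to $\T^d$-periodic domains is routine.
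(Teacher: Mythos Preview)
Your proof is correct and follows essentially the same approach as the paper: the lifting $G=\tilde g\circ\Psi$ you build in Step~1 is exactly the lift $\underline g(x,y)=e^{(y-f(x))|D_x|}g(x)$ that the paper constructs in its Appendix (Theorem~\ref{theo:lifting}), and your Lax--Milgram argument in Steps~2--3 coincides with the paper's. The trace issues you flag at the end are handled in the paper by the companion trace theorem (Theorem~\ref{theo:trace}).
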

\begin{proof}
According to Theorem \ref{theo:lifting}, there exists $\underline{g}\in \dot H^1(\Omega_f)$ such that $\Tr(\underline{g})=g$ and
\bq\label{est:lifting}
\| \underline{g}\|_{\dot H^1(\Omega_f)}\le C(1+\| f\|_{\Lip(\T^d)})\| g\|_{\dot H^\mez(\T^d)}.
\eq
Denote
\bq
\dot H^1_0(\Omega_f)=\{ u\in \dot H^1(\Omega_f): \Tr(u)=0\},
\eq
where the trace operator $\Tr: \dot H^1(\Omega_f)\to \dot H^\mez(\T)$ is given in Theorem \ref{theo:trace}. We then define $\phi$ solution to \eqref{elliptic:G} to be
\bq\label{def:vari}
\phi=u+\underline{g},
\eq
where $u$ is the unique variational solution in $\dot H^1_0(\Omega_f)$ of the equation $-\Delta_{x, y} u=\Delta_{x, y} \underline{g}$. That is, $u\in\dot H^1_0(\Omega_f)$ satisfies
\bq\label{form:vari}
\int_{\Omega_f}\na_{x, y}u\cdot \nabla_{x, y}\varphi dxdy=-\int_{\Omega_f}\na_{x, y}\underline{g}\cdot \nabla_{x, y}\varphi dxdy\quad\forall \varphi\in \dot H^1_0(\Omega_f).
\eq
Since $\dot H^1_0(\Omega_f)$ is a Hilbert space, the existence and uniqueness of $u$ is guaranteed by the Lax-Milgram theorem. From \eqref{form:vari} we have
\bq\label{vari:phi}
\int_{\Omega_f}\na_{x, y}\phi\cdot \nabla_{x, y}\varphi dxdy=0\quad\forall \varphi\in \dot H^1_0(\Omega_f).
\eq
Moreover, inserting $\varphi=u$ in \eqref{form:vari} we obtain  \eqref{est:vari} from \eqref{est:lifting}. We note that the solution constructed by \eqref{def:vari} and \eqref{form:vari} is independent of the choice of $\underline{g}$.
\end{proof}
With Proposition \ref{prop:elliptic} at hand,  a straightforward adaptation of the results in \cite{Alazard2014, PoyferreNguyen2017, NguyenPausader2019}, which hold for the non-periodic setting, yields
\begin{prop}[\protect{\cite[Theorem 3.8]{Alazard2014}  and \cite[Proposition 3.7]{NguyenPausader2019}}]\label{prop:DNlow}
If  $f\in \Lip(\T^d)$ and $g\in \dot H^\mez(\T^d)$, then $G(f)g$ is well-defined in $H^{-\mez}(\T^d)$ and there exists a universal constant $C$ such that
\bq\label{cont:DN:low}
\| G(f)g\|_{H^{-\mez}(\T^d)}\le C(1+\| f\|_{\Lip(\T^d)})^2\| g\|_{\dot H^\mez(\T^d)}.
\eq
\end{prop}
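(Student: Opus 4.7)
The plan is to define $G(f)g$ as an element of the dual of $\dot H^\mez(\T^d)$ via the variational identity, and then read off the $H^{-\mez}$ bound by duality. Concretely, for any $\psi\in \dot H^\mez(\T^d)$, pick any lifting $\underline{\psi}\in \dot H^1(\Omega_f)$ with $\Tr(\underline{\psi})=\psi$ furnished by Theorem \ref{theo:lifting}, and declare
\[
\langle G(f)g,\psi\rangle := \int_{\Omega_f}\na_{x,y}\phi\cdot \na_{x,y}\underline{\psi}\,dxdy,
\]
where $\phi$ is the harmonic extension from Proposition \ref{prop:elliptic}. The first task is to check that this is independent of the lifting: if $\underline{\psi}_1,\underline{\psi}_2$ have the same trace $\psi$, then $\underline{\psi}_1-\underline{\psi}_2\in \dot H^1_0(\Omega_f)$, and by the variational identity \eqref{vari:phi} the two integrals agree. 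A short Green's identity computation then confirms that this distributional definition reduces to the pointwise normal derivative \eqref{def:Gh} whenever $f$ and $g$ are smooth enough.

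With consistency in hand, the bound is immediate. By Cauchy--Schwarz,
\[
|\langle G(f)g,\psi\rangle|\le \| \na_{x,y}\phi\|_{L^2(\Omega_f)}\| \na_{x,y}\underline{\psi}\|_{L^2(\Omega_f)}.
\]
The first factor is controlled by \eqref{est:vari}, giving $C(1+\| f\|_{\Lip(\T^d)})\| g\|_{\dot H^\mez(\T^d)}$. Taking the infimum over all admissible liftings $\underline{\psi}$ and invoking \eqref{est:lifting} yields $C(1+\| f\|_{\Lip(\T^d)})\| \psi\|_{\dot H^\mez(\T^d)}$ for the second factor. Multiplying these gives precisely
\[
|\langle G(f)g,\psi\rangle|\le C(1+\| f\|_{\Lip(\T^d)})^2\| g\|_{\dot H^\mez(\T^d)}\| \psi\|_{\dot H^\mez(\T^d)}.
\]
Since $G(f)g$ is automatically mean-zero (take $\psi\equiv 1$, which is extended by the constant $\underline{\psi}\equiv 1$ so the right-hand side vanishes), this bound extends to all test functions in $H^\mez(\T^d)$, and by duality we obtain \eqref{cont:DN:low}.

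The entire argument is a transcription to the periodic setting of the reasoning in \cite{Alazard2014, PoyferreNguyen2017, NguyenPausader2019}. The substantive inputs are Proposition \ref{prop:elliptic} and the trace/lifting theorems \ref{theo:trace} and \ref{theo:lifting}; once those are available for horizontally periodic Lipschitz domains, there is no new difficulty. The only point requiring some care is keeping track of the two independent $(1+\| f\|_{\Lip(\T^d)})$ factors---one from the harmonic extension bound and one from the lifting---which together produce the quadratic Lipschitz dependence. The subtlety of the main theorem will come later in upgrading this baseline $H^{-\mez}$ estimate to the quantitative $L^2$ bound \eqref{intro:L2DN} using layer potential theory; the present proposition is only meant to give meaning to the object $G(f)g$ in the widest functional setting.
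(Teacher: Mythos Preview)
Your proposal is correct and is precisely the variational/duality argument from the cited references \cite{Alazard2014, NguyenPausader2019}; the paper itself does not reproduce a proof but simply states that ``a straightforward adaptation'' of those results to the periodic setting yields the proposition. What you have written is exactly that adaptation, with the two factors of $(1+\|f\|_{\Lip})$ arising from Proposition~\ref{prop:elliptic} and Theorem~\ref{theo:lifting} respectively.
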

  In higher Sobolev spaces, the Dirichlet-Neumann operator obeys the following tame estimate.
\begin{prop}[\protect{\cite[Proposition 2.13]{PoyferreNguyen2017}}]\label{prop:estDN}
Let $s_0>1+\frac{d}{2}$ and  $\sigma\ge \mez$. Then there exists a nondecreasing function $\cF:\Rr^+\to \Rr^+$  such that
\bq\label{est:DN}
\| G(f)g\|_{H^{\sigma-1}(\T^d)}\le \cF(\| f\|_{H^{s_0}(\T^d)})\left(\| g\|_{H^\sigma(\T^d)}+\| f\|_{H^\sigma(\T^d)}\| g\|_{H^{s_0}(\T^d)}\right)
\eq	
for all $f,\, g\in H^{\max\{s_0, \sigma\}}(\T^d)$.
\end{prop}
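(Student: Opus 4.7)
The plan is to adapt the paradifferential approach to the Dirichlet-Neumann operator that underlies \cite{Alazard2014, PoyferreNguyen2017, NguyenPausader2019}: flatten the fluid domain, paralinearize the transformed elliptic equation, factor its principal symbol into forward and backward parabolic factors, and read off $G(f)g$ as a boundary trace of a paradifferential operator of order one acting on $g$. First I would construct a regularized straightening diffeomorphism $\Phi(x, z) = (x, \eta(x, z))$ from $\T^d\times (-\infty, 0]$ onto $\Omega_f$ with $\eta(\cdot, 0) = f$ and $\p_z\eta \ge c > 0$. A standard choice, due to M\'etivier, is $\eta(x, z) = z + e^{z\langle D_x\rangle}f(x)$ (with a cut-off in $z$ if needed), which gives the tame bounds $\|\na_{x, z}\eta(\cdot, z)\|_{H^{s - 1}(\T^d)} \les \| f\|_{H^s(\T^d)}$ uniformly in $z$, together with $\|\na_{x, z}\eta\|_{L^\infty} \les \cF(\| f\|_{H^{s_0}(\T^d)})$. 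Setting $v(x, z) := \phi(x, \eta(x, z))$, Laplace's equation becomes $\alpha\p_z^2 v + 2\beta\cdot\na_x\p_z v + \cnx_x(\gamma\na_x v) = 0$ with coefficients $\alpha, \beta, \gamma$ polynomial in $\na_{x, z}\eta$ and $(\p_z\eta)^{-1}$, and the Dirichlet-Neumann operator is recovered by the boundary identity
\bq
G(f)g = \left.\frac{1 + |\na_x\eta|^2}{\p_z\eta}\p_z v - \na_x\eta \cdot \na_x v\right|_{z = 0}.
\eq

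Second, I would paralinearize the transformed equation using Bony's paraproduct decomposition and perform the symbolic factorization
\bq
\alpha\p_z^2 + 2\beta\cdot \na_x\p_z + \cnx_x(\gamma \na_x\cdot) = \alpha(\p_z - T_a)(\p_z - T_A) + R,
\eq
with paradifferential symbols $A, a$ of order one satisfying $\RE A \le -c|\xi|$ and $\RE a \ge c|\xi|$, and a tame remainder $R$ of order zero. Setting $w := (\p_z - T_A) v$, the factorization converts the elliptic equation into a forward parabolic equation $(\p_z - T_a) w = F$ as $z \to 0^-$, whose right-hand side $F$ collects the paralinearization remainders. On the boundary $z = 0$ one then has $\p_z v|_{z = 0} = T_A g + w|_{z=0}$, which exposes the classical principal symbol $\sqrt{(1 + |\na f|^2)|\xi|^2 - (\na f\cdot\xi)^2}$ of $G(f)$.

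Third, starting from the global energy bound of Proposition \ref{prop:elliptic}, I would run standard parabolic energy estimates on dyadic $z$-slabs adjacent to $z = 0$ for $w$ (and, integrating once more, for $v$) to propagate regularity from $\dot H^1(\Omega_f)$ up to the traces $\p_z v|_{z = 0}$ and $\na_x v|_{z = 0}$ in $H^{\sigma - 1}(\T^d)$. Combined with tame paraproduct estimates for $T_A$ and with a tame product estimate applied to the explicit boundary formula for $G(f)g$, this yields the claimed bound. The main obstacle will be the bookkeeping of tameness at every stage of the paradifferential calculus: the symbol seminorms of $A$ and $a$, the paraproduct remainders in the factorization, and the commutators with $\p_z$ and with $T_A$ must all split according to the tame template, with low-regularity norms of $f$ controlling the implicit constants while high-regularity norms appear at most linearly and are always paired with low-regularity norms of $g$ via the decomposition $fg = T_f g + T_g f + R(f, g)$. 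This is the technical heart of \cite[Proposition 2.13]{PoyferreNguyen2017}, whose argument specializes essentially without change from $\Rr^d$ to the horizontally periodic setting of $\T^d$.
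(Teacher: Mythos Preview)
Your outline correctly summarizes the paradifferential approach of \cite[Proposition 2.13]{PoyferreNguyen2017}, and there is no gap in the strategy you describe. However, the present paper does not supply its own proof of this proposition: it is quoted directly from \cite{PoyferreNguyen2017} as a known result, with the only remark being that the non-periodic arguments of \cite{Alazard2014, PoyferreNguyen2017, NguyenPausader2019} adapt to $\T^d$ once Proposition \ref{prop:elliptic} is in place. So there is nothing to compare against beyond noting that your sketch faithfully reproduces the method of the cited source.
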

Despite its nonlocality with respect to the boundary, the Dirichlet-Neumann operator has the contraction property given in the next result.
\begin{prop}[\protect{\cite[Corollary 3.25]{NguyenPausader2019}}]\label{theo:contraDN}
Let $s_0>1+\frac{d}{2}$ and $\sigma\in [\mez, s_0]$. Then there exists a nondecreasing function $\cF:\Rr^+\to \Rr^+$  such that
\[
\| G(f_1)g-G(f_2)g\|_{H^{\sigma-1}(\T^d)}\le \cF(\| (f_1, f_2)\|_{H^{s_0}(\T^d)})\| f_1-f_2\|_{H^\sigma(\T^d)}\| g\|_{H^{s_0}(\T^d)}
\]
for all $f_j,\, g\in H^{s_0}(\T^d)$.
\end{prop}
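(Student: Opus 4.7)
The plan is to exploit the affine path between $f_1$ and $f_2$ and reduce the estimate to a shape-derivative computation combined with the tame bound of Proposition \ref{prop:estDN}. Set $f_\tau = (1-\tau)f_2 + \tau f_1$ and $h = f_1 - f_2$. The identity
\[
G(f_1)g - G(f_2)g = \int_0^1 \p_\tau\bigl[G(f_\tau)g\bigr]\,d\tau
\]
reduces the claim to bounding $\|\p_\tau G(f_\tau)g\|_{H^{\sigma-1}(\T^d)}$ by $\cF(\|f_\tau\|_{H^{s_0}})\|h\|_{H^\sigma}\|g\|_{H^{s_0}}$ uniformly in $\tau\in[0,1]$.

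To make rigorous sense of the derivative, I would first flatten the moving domain by a diffeomorphism $\rho_\tau:\T^d\times(-\infty, 0]\to \Omega_{f_\tau}$ of the form $\rho_\tau(x,z) = (x,\, z + \wt f_\tau(x,z))$, where $\wt f_\tau$ is a Lipschitz-smoothing extension of $f_\tau$ (for instance the modified Poisson extension used in \cite{NguyenPausader2019}). The pull-back $v_\tau = \phi_\tau\circ\rho_\tau$ of the harmonic extension $\phi_\tau$ of $g$ to $\Omega_{f_\tau}$ solves a uniformly elliptic divergence-form equation $\cnx(A_\tau\na v_\tau)=0$ on the fixed cylinder with $v_\tau|_{z=0} = g$, and
\[
G(f_\tau)g = \a_\tau\, \p_z v_\tau\vert_{z=0} + \b_\tau\cdot\na_x v_\tau\vert_{z=0}
\]
with $\a_\tau, \b_\tau$ smooth in $f_\tau, \na_x f_\tau$. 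The ellipticity constant of $A_\tau$ and the norms of $\a_\tau, \b_\tau$ are controlled by $\cF(\|f_\tau\|_{H^{s_0}})$.

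Differentiating the transformed problem in $\tau$ and tracking boundary terms yields the classical shape-derivative identity
\[
\p_\tau\bigl[G(f_\tau)g\bigr] = -G(f_\tau)(B_\tau h) - \cnx_x(V_\tau h),
\]
where $B_\tau = \bigl(G(f_\tau)g + \na f_\tau\!\cdot\!\na g\bigr)/(1+|\na f_\tau|^2)$ and $V_\tau = \na g - B_\tau\na f_\tau$ are the normal and tangential traces of $\na_{x,y}\phi_\tau$ at $\Sigma_{f_\tau}$; the derivation in the non-periodic case in \cite{NguyenPausader2019} carries over. Since $s_0 - 1 > d/2$, the product law $\|uw\|_{H^\sigma(\T^d)}\les \|u\|_{H^{s_0-1}(\T^d)}\|w\|_{H^\sigma(\T^d)}$ combined with Proposition \ref{prop:estDN} gives
\[
\|B_\tau h\|_{H^\sigma} + \|V_\tau h\|_{H^\sigma} \le \cF(\|f_\tau\|_{H^{s_0}})\|g\|_{H^{s_0}}\|h\|_{H^\sigma},
\]
and invoking \eqref{est:DN} once more on $G(f_\tau)(B_\tau h)$ closes the bound after integration in $\tau$.

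The main obstacle is justifying the $\tau$-differentiation and the shape-derivative formula in the full range $\sigma \in [\mez, s_0]$, where $\sigma = \mez$ is at the edge of well-posedness for $G(f)$. My approach would be to first prove the estimate for smooth $f_1, f_2 \in H^\infty(\T^d)$ with constants depending only on $\|f_j\|_{H^{s_0}}$, where the shape-derivative calculus is unambiguous, and then pass to general $f_j \in H^{s_0}$ by density using the continuity statements built into Propositions \ref{prop:elliptic}--\ref{prop:estDN}. A secondary technical point is the construction of $\wt f_\tau$: one needs enough regularity in $z$ to differentiate in $\tau$ while keeping both $\|A_\tau\|$ and $\|\p_\tau A_\tau\|$ controlled by tame functions of $\|f_\tau\|_{H^{s_0}}$ and $\|h\|_{H^\sigma}$ respectively, which the Lipschitz-smoothing Poisson extension accomplishes simultaneously.
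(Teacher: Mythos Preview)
The paper does not prove this proposition; it is quoted verbatim from \cite[Corollary 3.25]{NguyenPausader2019} and used as a black box. So there is no in-paper argument to compare against. Your outline---affine interpolation $f_\tau$, flattening diffeomorphism, the shape-derivative identity $\p_\tau G(f_\tau)g = -G(f_\tau)(B_\tau h) - \cnx_x(V_\tau h)$, and closure by Proposition \ref{prop:estDN} plus product rules---is exactly the strategy that underlies the cited result, so in that sense you are reproducing the intended proof rather than giving an alternative.

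One small caution: the product estimate you invoke, $\|uw\|_{H^\sigma}\les \|u\|_{H^{s_0-1}}\|w\|_{H^\sigma}$, is only one-sided and fails for $\sigma>s_0-1$; in the full range $\sigma\in[\mez,s_0]$ you need the tame version $\|uw\|_{H^\sigma}\les \|u\|_{H^{s_0-1}}\|w\|_{H^\sigma}+\|u\|_{H^\sigma}\|w\|_{H^{s_0-1}}$, and correspondingly must control $\|B_\tau\|_{H^\sigma}$, $\|V_\tau\|_{H^\sigma}$ for $\sigma$ near $s_0$. This in turn requires knowing $G(f_\tau)g\in H^{\sigma-1}$ for such $\sigma$, which does follow from \eqref{est:DN} applied with that $\sigma$ (using $f_\tau,g\in H^{s_0}$ and $\sigma\le s_0$), but the bookkeeping should be made explicit. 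Your regularize-then-pass-to-the-limit plan for justifying the $\tau$-derivative at $\sigma=\mez$ is the right way to handle the endpoint.
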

\subsection{Pointwise properties in two dimensions}
\subsubsection{Pointwise $C^{1, \alpha}$ estimate for harmonic functions}\label{Section:elliptic}
Suppose that $ U$ is a Lipschitz domain in $\Rr^2$. For $(x_0,y_0)\in \Rr^2$ and $r>0$, we denote $ U_r(x_0,y_0)=B_r(x_0,y_0)\cap  U$ and $ U_r= U_r(0)$. We also define the half ball as
$$
B^+_r(x_0,y_0)=\{(x,y)\in B_r(x_0,y_0):y>y_{0}\}.
$$
We assume that $0\in \partial U$. Suppose that there exists some $r_0>0$ such that in a coordinate system, $\partial U\cap B_{2r_0}$ can be represented by a Lipschitz graph with Lipschitz constant $L>0$.

Let $u$ be a harmonic function in $ U$, which vanishes on $\partial U$.
\begin{lemm}
Under the conditions above, there exist $\varepsilon_0=\varepsilon_0(L)>0$ and $M_1=M_1(L)>0$ such that $u\in C^{\mez+\varepsilon_0}( U_{r_0})$ and
\begin{equation}
                        \label{eq8.07}
\|u\|_{C^{\mez+\varepsilon_0}( U_{r_0})}\le M_1r_0^{-\tdm-\varepsilon_0}\|u\|_{L^2( U_{2r_0})}.
\end{equation}
\end{lemm}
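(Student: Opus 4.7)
The plan is to deduce the Hölder estimate from a pointwise boundary decay estimate based on the two-dimensional exterior cone geometry, combined with the standard interior regularity for harmonic functions. By the rescaling $\tilde u(x,y)=u(r_0 x, r_0 y)$, which leaves harmonicity invariant and transforms the Lipschitz constant $L$ into itself, it suffices to treat $r_0=1$; the factor $r_0^{-3/2-\varepsilon_0}$ in \eqref{eq8.07} will reappear automatically when we return to the original scale.

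First I would pass from the $L^2$ bound to an $L^\infty$ bound by applying the mean value inequality to the subharmonic function obtained by extending $u^2$ by zero to $B_2\setminus\overline U$ (this extension is subharmonic since $u$ vanishes on $\partial U$ and $u^2$ is subharmonic in $U$). This yields $\|u\|_{L^\infty(U_{3/2})}\le C\|u\|_{L^2(U_2)}$ with $C$ universal. Next is the key ingredient: a quantitative boundary Hölder estimate at every $z_0\in\partial U\cap B_{3/2}$ of the form
\begin{equation*}
\sup_{U\cap B_\rho(z_0)}|u|\le C\rho^{\mez+\varepsilon_0}\|u\|_{L^\infty(U_{3/2})},\qquad 0<\rho<\tfrac14,
\end{equation*}
for some $\varepsilon_0=\varepsilon_0(L)>0$. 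Because $\partial U$ is a Lipschitz graph with constant $L$, the complement $U^{c}$ contains, at every boundary point $z_0$, a truncated cone of half-aperture $\arctan(1/L)$. After translating and rotating so that $z_0=0$ and this cone is symmetric, the complement of the full cone is an angular sector of opening $2\pi-2\arctan(1/L)$. On that sector the explicit harmonic function $w(r,\theta)=r^\gamma \sin(\gamma\theta)$ with
\begin{equation*}
\gamma=\frac{\pi}{2\pi-2\arctan(1/L)}>\mez
\end{equation*}
vanishes on the two bounding rays of the sector, is positive in between, and thus majorizes $u$ (up to the constant $\|u\|_{L^\infty(U_{3/2})}$) via the maximum principle on $U\cap B_\rho(z_0)$. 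This gives the display above with $\varepsilon_0=\gamma-\mez=\arctan(1/L)/(2\pi-2\arctan(1/L))$, which depends only on $L$.

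To upgrade this pointwise boundary decay into the interior--up--to--the--boundary Hölder bound \eqref{eq8.07}, I would use the standard distance--to--boundary argument: for any $z\in U_1$, let $d(z)=\dist(z,\partial U)$; interior gradient estimates for harmonic functions give $|\nabla u(z)|\le C d(z)^{-1}\|u\|_{L^\infty(U_{3/2})}$, and combining this with the boundary decay applied at the nearest boundary point to $z$ yields $|\nabla u(z)|\le C d(z)^{\varepsilon_0-\mez}\|u\|_{L^\infty(U_{3/2})}$. Integrating along suitable chains of interior balls then produces the $C^{\mez+\varepsilon_0}$ seminorm bound; composing with the $L^\infty$--to--$L^2$ reduction and undoing the scaling gives \eqref{eq8.07} with a constant $M_1=M_1(L)$. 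The main obstacle is keeping every constant quantitative and uniform in $z_0\in\partial U\cap B_{3/2}$: the barrier must be anchored to a truncated exterior cone of a size depending only on $L$ (not on the particular parametrization near $z_0$), and the interior--boundary interpolation must be carried out so that the final Hölder constant depends only on $L$.
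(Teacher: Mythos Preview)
Your proposal is correct and follows essentially the same route as the paper: both reduce by scaling to $r_0=1$, obtain an $L^\infty$ bound on $U_{3/2}$, then build at each boundary point an explicit homogeneous harmonic barrier on the angular sector complementary to the exterior cone, and finish by combining the resulting boundary decay $|u(z)|\le C\operatorname{dist}(z,\partial U)^{\gamma}$ with interior estimates. The only differences are cosmetic: the paper invokes the boundary De Giorgi--Nash--Moser estimate for the $L^\infty$ bound whereas you use the subharmonicity of the zero extension of $u^2$ (a legitimate and more elementary alternative, though it tacitly uses that $u$ vanishes continuously on $\partial U$, which in a Lipschitz domain follows from the Wiener criterion or from DG--N--M itself); and the paper takes $\gamma=\pi/(2\pi-\beta)$ with $\beta=\arctan(1/L)$, a convenient but non-sharp exponent, while your $\gamma=\pi/(2\pi-2\beta)$ is the optimal one coming from the exact opening of the sector. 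Either choice gives $\varepsilon_0=\gamma-\tfrac12>0$ depending only on $L$, so the conclusion is the same.
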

\begin{proof}
By scaling, we may assume that $r_0=1$ and $\|u\|_{L^2( U_2)}=1$. By the boundary De Giorgi-Nash-Moser estimate, we know that
\begin{equation}
                                \label{eq6.53}
\|u\|_{L^\infty( U_{3/2})}\le M(L).
\end{equation}
Now we fix a point $(x_0, y_0)\in \partial U\cap B_1$. Since $\partial U\cap B_{2}$ can be represented by a Lipschitz graph with Lipschitz constant $L>0$, we may assume that
\begin{equation}
                                \label{eq6.47}
 U_{1/2}(x_0, y_0)\subset \big\{(x, y)\in B_{1/2}(x_0, y_0): y-y_0>-L|x-x_0|\big\}.
\end{equation}
Let $\beta=\tan^{-1}(1/L)\in (0,\frac{\pi}{2})$, $\gamma=\pi/(2\pi-\beta)\in (\mez,1)$, and define
$$
v(x, y)=\|u\|_{L^\infty( U_{3/2})}\text{Re} \big(-2i[x-x_{0}+i(y-y_{0})]\big)^\gamma (\cos((\pi-\beta)\gamma))^{-1}.
$$
In view of \eqref{eq6.47}, it is also easily seen that $v$ is a harmonic function in $ U_{1/2}(x_0, y_0)$. Moreover, $v\ge 0$ on $\partial U\cap B_{1/2}(x_0, y_0)$ and $v\ge \|u\|_{L^\infty( U_{3/2})}$ on $ U\cap \partial B_{1/2}(x_0, y_0)$. By the comparison principle, we have $|u|\le v$ in $ U_{1/2}(x_0, y_0)$, which implies that
$$
|u(x, y)|\le M|(x-x_0)^2+(y-y_0)^2|^\frac{\gamma}{2} \|u\|_{L^\infty( U_{3/2})}
$$
in $ U_{1/2}(x_0, y_0)$. This together with \eqref{eq6.53} and the interior regularity of harmonic functions complete the proof with $\varepsilon_0=\gamma-\mez$.
\end{proof}
We remark that alternatively the above H\"older estimate can be obtained by using the $W^{1, 4+\varepsilon}$ estimate by Jerison-Kenig \cite{JerisonKenig1995} and the Morrey embedding.
\begin{assu}\label{assump1}
There exist constants $M_0,r_0>0$ and function $\psi$ in $(-r_0,r_0)$ such that in a coordinate system
$$
\psi(0)=\psi'(0)=0,\quad  U_{r_0}=\{(x,y)\in B_{r_0}:\,y>\psi(x)\},
$$
and for any $r\in (0,r_0)$,
\begin{equation}
                        \label{eq7.33}
\{(x,y)\in B_{r}:\,y>M_0r^2\}\subset  U_r.
\end{equation}
Moreover, $ U$ satisfies the exterior ball condition at $0$ with radius $1/M_0$.
\end{assu}
Note that if $\partial U$ is $C^{1,1}$ at $0\in \partial U$, then Assumption \ref{assump1} is satisfied.

\begin{theo}
                        \label{thm1}
Under the conditions above, $u$ is $C^{1,\alpha}$ at $0$, i.e., for any $(x,y)\in U$ such that $\sqrt{x^2+y^2}<r_0$, we have
\begin{equation}
                            \label{eq12.24}
|u(x,y)-(x,y)\cdot\nabla_{x, y} u(0)|\le M|x^2+y^2|^{\frac{1+\alpha}{2}}r_0^{-2-\alpha}\|u\|_{L^2( U_{2r_0})},
\end{equation}
where $M>0$ is a constant depending only on $M_0r_0$, and $L$, and $\alpha\in (0,1)$ is a small constant depending only on $L$.
\end{theo}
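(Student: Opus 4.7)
The plan is to establish a Campanato-type decay of $u$ at the origin by approximating it with linear functions $c_k y$ on geometric scales $r_k=\rho^k$, via an iterated excess-decay lemma proved by compactness. First I would normalize: after scaling to $r_0=1$ and $\|u\|_{L^2( U_2)}=1$, the boundary H\"older estimate \eqref{eq8.07} gives $\|u\|_{L^\infty( U_{3/2})}\le M_1$.

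The core step will be the following excess-decay lemma, which I would prove by a contradiction/compactness argument: there exist $\rho\in(0,1/4)$, $\alpha\in(0,1)$, and $\delta_\ast,C^\ast>0$ depending only on $L$ such that whenever $V\subset\Rr^2$ is a Lipschitz domain with constant $L$ satisfying Assumption \ref{assump1} at $0$ with radius $1$ and quadratic constant at most $\delta_\ast$, and $w$ is harmonic in $V\cap B_1$ with $\|w\|_{L^\infty(V\cap B_1)}\le 1$ and $|w|\le\delta_\ast$ on $\partial V\cap B_1$, there exists $c\in\Rr$ with $|c|\le C^\ast$ and
\[
\|w-c\,y\|_{L^\infty(V\cap B_\rho)}\le \rho^{1+\alpha}.
\]
Supposing the lemma fails, a counterexample sequence $(V_n,w_n)$ with quadratic constants $\to 0$ would produce, by Arzel\`a-Ascoli combined with the uniform H\"older estimate just proved, a subsequential limit $w^\ast$ harmonic in $B_1^+=\{y>0\}\cap B_1$ and vanishing on $\{y=0\}\cap B_1$; the Hausdorff convergence $V_n\cap B_1\to B_1^+$ follows from the shrinking quadratic bound on the graphs $\psi_n$, and the exterior-ball hypothesis controls the lower side uniformly. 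Schwarz reflection makes $w^\ast$ a bounded harmonic function on $B_1$ odd in $y$, whose Taylor expansion at $0$ reads $w^\ast(x,y)=c^\ast y+O(|z|^2)$; choosing $\rho$ small enough that $C\rho^2<\tfrac12\rho^{1+\alpha}$ would contradict the failure of the lemma, while a Poisson-kernel bound for the half-ball gives $|c^\ast|\le C^\ast$.

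With the lemma in hand, I would iterate as follows. The rescaled graph $\tilde\psi_k(x)=\rho^{-k}\psi(\rho^k x)$ obeys $|\tilde\psi_k(x)|\le M_0\rho^k x^2$, so picking $k_0$ with $M_0\rho^{k_0}\le\delta_\ast$ one takes $c_k=0$ for $k\le k_0$ (handled by the H\"older bound) and for $k\ge k_0$ sets $c_{k+1}=c_k+\rho^{k\alpha}\tilde c_k$, where $\tilde c_k$ is produced by applying the excess-decay lemma to
\[
w(x,y)=\rho^{-k(1+\alpha)}\bigl(u(\rho^k x,\rho^k y)-c_k\rho^k y\bigr)
\]
on the domain $ U/\rho^k$. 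The inductive hypothesis ensures $\|w\|_{L^\infty}\le 1$; its boundary values are bounded by $|c_k|M_0\rho^{k(1-\alpha)}$, which stays below $\delta_\ast$ for $k$ large because the telescoping $|c_k|\le C^\ast/(1-\rho^\alpha)$ is uniform. A Cauchy argument gives $c_k\to c^\ast$ with $|u(z)-c^\ast y|\le C|z|^{1+\alpha}$ for $|z|<\rho^{k_0}$, and the intermediate range $\rho^{k_0}\le|z|\le 1$ is absorbed in the H\"older bound on $u$. The tangential component of $\nabla_{x,y}u(0)$ vanishes because $u\equiv 0$ on the tangent to $\partial U$ at $0$, so $(x,y)\cdot\nabla_{x,y}u(0)=c^\ast y$; undoing the initial scaling produces the factor $r_0^{-2-\alpha}\|u\|_{L^2( U_{2r_0})}$.

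The hard part will be the excess-decay lemma. It demands a \emph{uniform} boundary H\"older estimate so that harmonic functions on varying Lipschitz domains form a pre-compact family with the half-ball as the limiting configuration, which is exactly what estimate \eqref{eq8.07} delivers with a constant depending only on $L$. A second subtle point is the simultaneous bookkeeping of $|c_k|$ and the boundary defect $|c_k|M_0\rho^{k(1-\alpha)}$ through the iteration, which forces the exponent $\alpha$ to be strictly less than $1$ and ultimately pins its value to $L$ and $M_0$.
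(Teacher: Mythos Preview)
Your compactness/blow-up strategy is sound and does lead to the theorem, but it is a genuinely different argument from the one in the paper, and one remark at the end of your sketch is off.

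\textbf{What the paper does.} The paper never argues by contradiction. At each scale $r$ it works on a fixed smooth half-domain $E_r(0,M_0r^2)\subset U_r$, multiplies $u$ by a cutoff $\eta_r$ in the vertical variable to kill the Lipschitz boundary, and splits $u\eta_r=v+w$: here $v$ is harmonic with flat Dirichlet data and is handled by classical boundary estimates for the half-ball, while $w$ solves an inhomogeneous problem whose right-hand side is supported in the thin strip $\{M_0r^2<y<2M_0r^2\}$. The size of $w$ is controlled quantitatively via the $W^{1,p}$ estimate on smooth domains, a reverse H\"older inequality (giving $\nabla u\in L^{p_0}$ with $p_0>2$ depending only on $L$), and a Hardy-type duality argument. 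This yields directly
\[
\inf_{a,b}\|u-(a+by)\|_{L^\infty(U_{\kappa r})}\le M\kappa^{2}\inf_{a,b}\|u-(a+by)\|_{L^\infty(U_{r})}+Mr^{1+\alpha},
\]
with an explicit $\alpha=\min\{2\varepsilon_0,1/q\}$ determined by $L$ alone; a standard iteration then finishes.

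\textbf{What you do differently, and trade-offs.} Your route replaces the explicit $w$-estimate by a De Giorgi--type improvement-of-flatness lemma proved by compactness: rescaled domains converge to the half-plane, rescaled solutions converge (using \eqref{eq8.07}) to a harmonic function vanishing on $\{y=0\}$, and Schwarz reflection produces the linear approximation. This avoids the $W^{1,p}$/reverse H\"older machinery entirely and is conceptually cleaner, at the cost of being non-constructive and requiring some care with convergence up to the moving boundary (note \eqref{eq8.07} is stated for zero boundary data, so you should split off the harmonic extension of the small boundary values before invoking equicontinuity). The paper's approach, by contrast, is fully quantitative and pins down $\alpha$ explicitly in terms of $\varepsilon_0$ and the reverse-H\"older exponent.

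\textbf{A correction.} Your last sentence says the bookkeeping ``ultimately pins $\alpha$ to $L$ and $M_0$''. That is not right, and it would contradict the theorem, which asserts $\alpha=\alpha(L)$ only. In your own scheme $\alpha$ and $\rho$ are fixed by the half-plane limit (choose any $\alpha\in(0,1)$, then $\rho$ small so that $C_0\rho^{1-\alpha}$ is small), and the iteration constraint on the boundary defect, $|c_k|\,M_0\,\rho^{k(1-\alpha)}/C\le\delta_\ast$, is handled by choosing $k_0$ large in terms of $M_0$ and absorbing the resulting factor into the constant $M$; no dependence of $\alpha$ on $M_0$ is introduced. Similarly, the base case $c_{k_0}=0$ is not covered by the H\"older bound \eqref{eq8.07} alone (that gives only $r^{1/2+\varepsilon_0}$); you need the Lipschitz estimate \eqref{eq12.24b} from the exterior-ball barrier and then normalize $u$ by a constant depending on $M_0$, which again only affects $M$, not $\alpha$.
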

First, by scaling we may assume that $r_0=1$, so that the new $M_0$ becomes $M_0r_0$, and $\varepsilon_0$ and $L$ remain the same. Moreover, by dividing $u$ by a constant we may also assume that $\|u\|_{L^2( U_{2})}=1$.

Since $\partial U$ satisfies an exterior ball condition, by using a barrier argument, we know that $u$ is Lipschitz at $0$ and
\begin{equation}
                                \label{eq12.24b}
|u(x,y)|\le M(x^2+y^2)^\mez \quad\text{in}\,\, U_{2}.
\end{equation}
By the Caccioppoli inequality, it is easily seen that for $r\in (0,1)$,
\begin{equation}
                            \label{eq11.18}
\|\nabla_{x, y} u\|_{L^2( U_{3r/2})}\le Mr^{-1}\|u\|_{L^2( U_{2r})}\le Mr,
\end{equation}
where we used \eqref{eq12.24b} in the second inequality.
It follows from the reverse H\"older's inequality (see, for instance, \cite[Ch. V]{Giaquinta1983}) and \eqref{eq11.18} that there exists $p_0=p_0(L)>2$ such that
\begin{equation}
                            \label{eq11.24}
\|\na_{x, y} u\|_{L^{p_0}( U_r)}\le Mr^{\frac{2}{p_0}}.
\end{equation}

Now we take a smooth domain $E$ such that $B_{2/3}^+\subset E\subset B_{3/4}^+$. For any $(x_0,y_0)\in \Rr^2$ and $r>0$, denote
\begin{align*}
E_r(x_0,y_0)&=\{(x,y)\in \Rr^2:r^{-1}(x-x_0,y-y_0)\in E\},\\
\Gamma_r(x_0,y_0)&=\{(x,y)\in \partial E_r(x_0,y_0): y=y_{0}\}.
\end{align*}
Clearly, for $r$ sufficiently small, by \eqref{eq7.33} we have $E_r(0,M_0r^2)\subset  U_r$.

Take a smooth function $\eta=\eta(s)$ on $\Rr$ such that $\eta(s)=0$ in $(-\infty,1)$ and $\eta(s)=1$ in $(2,\infty)$. Denote $\eta_r(s)=\eta(s/(M_0r^2))$. A simple calculation reveals that $u(x,y)\eta_r(y)$ satisfies
\begin{equation*}
\Delta_{x, y} (u(x,y)\eta_r(y))=\partial_y(u\eta'_r)+\partial_yu\eta'_r\quad \text{in}\,\,E_r(0,M_0r^2)
\end{equation*}
and $u(x,y)\eta_r(y)=0$ on $\Gamma_r(0,M_0r^2)$. Note that the right-hand side is supported in a narrow strip $\{(x,y)\in  U_r:M_0r^2<y<2M_0r^2\}$.

We decompose $u\eta_r$ in $E_r(0,M_0r^2)$ as follows. Let $w=w_r$ be a weak solution to
\begin{equation*}
\Delta_{x, y} w=\partial_y(u\eta'_r)+\partial_yu\eta'_r\quad \text{in}\,\,E_r(0,M_0r^2)
\end{equation*}
with the zero Dirichlet boundary condition on $\partial E_r(0,M_0r^2)$.
Then $v=v_r=u\eta_r-w$ satisfies
\begin{equation*}
\Delta_{x, y} v=0\quad \text{in}\,\,E_r(0,M_0r^2)
\end{equation*}
and $v=0$ on $\Gamma_r(0,M_0r^2)$.

{\em Estimates of $w$.} Since $E_r(0,M_0r^2)$ is smooth, by the $W^{1, p}$ estimate, we know that for any $p<\infty$, $w\in W^{1, p}(E_r(0,M_0r^2))$. Notice that $\eta_r'(y)\le M|y-\psi(x)|^{-1}$. By using Hardy's inequality and a duality argument (see the proof of \cite[Theorem 3.5]{DongXiong2015}), we have
\begin{align}
                    \label{eq7.52}
\|\nabla_{x, y} w\|_{L^p(E_r(0,M_0r^2))}
&\le M\|\nabla_{x, y} u\|_{L^p( U_r\cap\{y<2M_0r^2\})}.
\end{align}
Now we fix $p=\frac{(2+p_0)}{2}$ and let $q>1$ be such that $\frac{1}{q}=\frac{1}{p}-\frac{1}{p_0}$. Using \eqref{eq7.52}, H\"older's inequality, and \eqref{eq11.24}, we get
\begin{align}
                    \label{eq11.33}
\|\nabla_{x, y} w\|_{L^p(E_r(0,M_0r^2))}
&\le M\|\nabla_{x, y} u\|_{L^{p_0}( U_r\cap\{y<2M_0r^2\})}r^{\frac{3}{q}}\le Mr^{\frac{2}{p_0}+\frac{3}{q}}.
\end{align}
By the zero boundary condition and the Morrey embedding, from \eqref{eq11.33} we obtain
\begin{align}
                    \label{eq7.55}
\|w\|_{L^\infty(E_r(0,M_0r^2))}
&\le Mr^{1-\frac{2}{p}}\|w\|_{C^{1-\frac{2}{p}}(E_r(0,M_0r^2))}\notag\\
&\le Mr^{1-\frac{2}{p}}\|\nabla w\|_{L^p(E_r(0,M_0r^2))}\notag\\
&\le Mr^{1-\frac{2}{p}+\frac{2}{p_0}+\frac{3}{q}}=Mr^{1+\frac{1}{q}}.
\end{align}

{\em Estimates of $v$.} Since $B^+_{r/2}(0,M_0r^2)\subset E_r(0,M_0r^2)$, we know that $v$ is harmonic in $B^+_{r/2}(0,M_0r^2)$ and vanishes on the flat boundary. By the boundary estimate for harmonic functions,
\begin{align*}
\|\nabla_{x, y} v\|_{L^\infty(B^+_{r/4}(0,M_0r^2))}\le Mr^{-1}\|v\|_{L^\infty(B^+_{r/2}(0,M_0r^2))},
\end{align*}
which together with \eqref{eq7.55} and the Lipschitz regularity of $u$ at $0$ implies that
\begin{equation}
                        \label{eq11.51}
\|\nabla_{x, y} v\|_{L^\infty(B^+_{r/4}(0,M_0r^2))}\le M.
\end{equation}
Moreover, by \cite[Lemma 2.5]{DongEscauriazaKim2018}, for any linear function $\ell$ of $y$,
\begin{equation*}
\|\nabla^2_{x, y} v\|_{L^\infty(B^+_{r/4}(0,M_0r^2))}\le Mr^{-2}\|v-\ell\|_{L^\infty(B^+_{r/2}(0,M_0r^2))}.
\end{equation*}
Thus by the mean value theorem and because $v(0,M_0r^2)=\partial_xv(0,M_0r^2)=0$, for any $\kappa\in (0,1/4)$,
\begin{align}
                \label{eq12.25}
&\|v-(y-M_0r^2)\partial_yv(0,M_0r^2)\|_{L^\infty(B^+_{\kappa r}(0,M_0r^2))}\notag\\
&\le M\kappa^{2}\|v-\ell\|_{L^\infty(B^+_{r/2}(0,M_0r^2))}.
\end{align}

{\em Estimates of $u$.}
Recall that $u\eta_r=w+v$ in $E_r(0,M_0r^2)$.
Combining \eqref{eq7.55} and \eqref{eq12.25} gives
\begin{align}
                        \label{eq11.46}
&\|u\eta_r-(y-M_0r^2)\partial_yv(0,M_0r^2)\|_{L^\infty(B^+_{\kappa r}(0,M_0r^2))}\notag\\
&\le M\kappa^{2}\inf_{a,b\in \Rr}\|u\eta_r-(a+by)\|_{L^\infty(B^+_{r/2}(0,M_0r^2))}
+Mr^{1+\frac{1}{q}}.
\end{align}
By the $C^{\mez +\varepsilon_0}$ estimate \eqref{eq8.07}, we also have
\begin{align*}
&\|u(1-\eta_r)\|_{L^\infty( U_r)}
\le \sup_{ U_r\cap \{y<2M_0r^2\}}|u(x,y)|\\
&=\sup_{ U_r\cap \{y<2M_0r^2\}}|u(x,y)-u(x,\psi(x))|
\le Mr^{1+2\varepsilon_0}.
\end{align*}
Combining the above inequality, \eqref{eq11.51}, and \eqref{eq11.46} yields that for any $\kappa\in (0,\frac18)$,
$$
\inf_{a,b\in \Rr}\|u-(a+by)\|_{L^\infty( U_{\kappa r})}
\le M\kappa^{2}\inf_{a,b\in \Rr}\|u-(a+by)\|_{L^\infty( U_{r})}
+Mr^{1+\alpha},
$$
where $\alpha=\min\{2\varepsilon_0, \frac{1}{q}\}$.
By a standard iteration argument, we then get
$$
\inf_{a,b\in \Rr}\|u-(a+by)\|_{L^\infty( U_{r})}\le Mr^{1+\alpha},
$$
i.e., for any $r\in (0,1)$, there exist constants $a_r$ and $b_r$ such that
\begin{equation}
                                \label{eq8.10}
\|u-(a_r+b_ry)\|_{L^\infty( U_{r})}\le Mr^{1+\alpha}.
\end{equation}
\begin{lemm}
                        \label{lem1.1}
We have
\begin{equation}
                        \label{eq8.11}
|u(0)-a_r|\le Mr^{1+\alpha}
\end{equation}
and for any $\frac{r}{2}\le s\le r<1$,
\begin{equation}
                        \label{eq8.20}
|b_{s}-b_r|\le Mr^\alpha.
\end{equation}
\end{lemm}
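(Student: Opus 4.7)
The plan is to derive both inequalities directly from the approximate affine expansion \eqref{eq8.10}, using only continuity of $u$ up to $\partial U$ and the geometric information in Assumption \ref{assump1}.

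First, I would prove \eqref{eq8.11}. Since $u$ is harmonic in $U$ with zero boundary data and $\partial U$ is Lipschitz, the boundary De Giorgi--Nash--Moser estimate (which was already invoked to obtain \eqref{eq6.53}) gives Hölder continuity of $u$ up to $\partial U$; in particular $u$ extends continuously to $0 \in \partial U$ with $u(0)=0$. Both $u$ and the affine function $(x,y)\mapsto a_r + b_r y$ are continuous on $\overline{U_r}$, so \eqref{eq8.10} extends from $U_r$ to $\overline{U_r}$. Evaluating at the boundary point $0 \in \partial U\cap \overline{U_r}$ gives
\[
|u(0)-a_r|=|a_r| \le Mr^{1+\alpha},
\]
which is \eqref{eq8.11}.

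Next, for \eqref{eq8.20} with $r/2\le s\le r<1$, I would subtract the two approximate affine expansions at scales $s$ and $r$. Since $U_s \subset U_r$, \eqref{eq8.10} applied at both scales and the triangle inequality yield
\[
|(a_r-a_s)+(b_r-b_s)y|\le M(r^{1+\alpha}+s^{1+\alpha})\le 2Mr^{1+\alpha}
\quad\text{for all } (x,y)\in U_s.
\]
Using \eqref{eq8.11} at both scales and $s\le r$ gives $|a_r-a_s|\le 2Mr^{1+\alpha}$. To read off information about $b_r - b_s$, I need to evaluate this inequality at a point $(0,y_*)$ with $y_*$ comparable to $s$. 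By Assumption \ref{assump1}, the point $(0,y_*)\in U_s$ as soon as $y_*>M_0 s^2$; hence for $s$ with $M_0 s<1/2$ one may take $y_*=s/2$. The triangle inequality then gives
\[
|b_r-b_s|\cdot \frac{s}{2}\le |(a_r-a_s)+(b_r-b_s)y_*|+|a_r-a_s|\le 4Mr^{1+\alpha},
\]
and since $s\ge r/2$ this yields $|b_r-b_s|\le Mr^\alpha$ with a new constant depending only on $M_0$ and $L$.

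Finally, in the remaining regime $s\ge 1/(2M_0)$ the desired estimate is essentially trivial: $r^\alpha$ is bounded below by a positive constant, and a crude bound $|b_r|, |b_s|\le M$ follows by evaluating \eqref{eq8.10} at two points $(0,y_1),(0,y_2)\in U_r$ with $y_2-y_1\sim r$ (both exist by Assumption \ref{assump1} for $r$ bounded away from $0$) together with the Lipschitz bound \eqref{eq12.24b}; then $|b_r-b_s|\le 2M\le M r^\alpha$. The only subtlety to keep in mind is to justify that the $L^\infty$ estimate \eqref{eq8.10} transfers to pointwise information at the boundary point $0$, which is what the boundary Hölder continuity provides; once this is in place the remainder is algebraic manipulation.
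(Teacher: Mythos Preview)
Your proof is correct and follows essentially the same approach as the paper: both obtain \eqref{eq8.11} by evaluating \eqref{eq8.10} at the boundary point $0$, and both obtain \eqref{eq8.20} by subtracting the two affine approximations on $U_s$ and then using \eqref{eq8.11}. You have simply made explicit the steps the paper leaves implicit---namely the continuity of $u$ at $0$ (so that \eqref{eq8.10} can be evaluated there), the choice of a point $(0,y_*)\in U_s$ with $y_*$ comparable to $s$ via Assumption \ref{assump1}, and the trivial treatment of the regime where $s$ is bounded away from zero.
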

\begin{proof}
The inequality \eqref{eq8.11} follows directly from \eqref{eq8.10}. For \eqref{eq8.20}, by the triangle inequality, for any $(x,y)\in  U_{s}$,
\begin{align*}
&|(a_{s}+b_{s}y)-(a_{r}+b_{r}y)|\\
&\le |u(x,y)-(a_{s}+b_{s}y)|+|u(x,y)-(a_{r}+b_{r}y)|\le Mr^{1+\alpha}.
\end{align*}
This together with \eqref{eq8.11} implies \eqref{eq8.20}. The lemma is proved.
\end{proof}

\begin{proof}[Proof of Theorem \ref{thm1}]
From \eqref{eq8.20} we see that $b_r$ is convergent as $r\to 0$. Let $b^*$ be the limit. Then again from \eqref{eq8.20}, we have
\begin{equation}
                        \label{eq8.21}
|b^*-b_r|\le Mr^\alpha.
\end{equation}
Combining \eqref{eq8.10}, \eqref{eq8.11}, and \eqref{eq8.21}, we reach
$$
|u(x,y)-b^* y|\le M|x^2+y^2|^{\frac{1+\alpha}{2}},
$$
which gives \eqref{eq12.24} with $\partial_x u(0)=0$ and $\partial_y u(0)=b^*$.
The theorem is proved.
\end{proof}
\begin{coro}\label{coro:pointwiseelliptic}
Assume that $f\in W^{1, \infty}(\T)$ is $C^{1, 1}$ at $x_0\in \T$. Then the harmonic extension $\phi$ of $f$ to $\Omega_f$ is $C^{1, \alpha}$ at $(x_0, f(x_0))$ in the following quantitative sense. Let $\gamma\in (0, 1)$ and $M_0>0$ be such that
\[
|f(x)- f(x_0)-\p_xf(x_0)(x-x_0)|\le M_0|x-x_0|^2\quad \forall |x-x_0|<\gamma.
\]
Then there exists $\alpha\in (0, 1)$  depending only on $\| f\|_{W^{1, \infty}(\T)}$, and there exists $M>0$ depending only on $\| f\|_{W^{1, \infty}(\T)}$ and $M_0\gamma$, such that
\[
|\phi(x, y)-f(x_0)-(x-x_0, y-f(x_0))\cdot \na\phi(x_0, f(x_0))|\le M\gamma^{-1-\alpha}\big|(x-x_0)^2+(y-f(x_0))^2\big|^{\frac{1+\alpha}{2}}
\]
for all $(x, y)\in \Omega_f$ satisfying $(|x-x_0|^2+|y-f(x_0)|^2)^\mez<\gamma$.
\end{coro}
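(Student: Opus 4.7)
The plan is to reduce to Theorem \ref{thm1} via an auxiliary harmonic function and a rigid change of variables. The first key observation is that $u(x,y):=\phi(x,y)-y$ is harmonic in $\Omega_f$ and vanishes identically on $\Sigma=\partial\Omega_f$, because $\phi|_\Sigma=f$ by construction of the harmonic extension. Writing $z_0=(x_0,f(x_0))$, a direct computation using $u(z_0)=0$ and $\na u(z_0)=\na\phi(z_0)-(0,1)$ gives the algebraic identity
\[
u(x,y)-u(z_0)-\bigl((x,y)-z_0\bigr)\cdot\na u(z_0)=\phi(x,y)-f(x_0)-(x-x_0,y-f(x_0))\cdot\na\phi(z_0),
\]
so the corollary's claimed bound for $\phi$ at $z_0$ is equivalent to a pointwise $C^{1,\alpha}$ estimate for $u$ at $z_0$ as furnished by Theorem \ref{thm1}.

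To put the geometry in the form required by Theorem \ref{thm1}, let $\theta_0=\arctan(\p_xf(x_0))$ and let $T$ be the isometry obtained by composing translation by $-z_0$, rotation by $-\theta_0$, and reflection $Y\mapsto -Y$. In the new coordinates $(X,Y)$, the image domain $\tilde U=T(\Omega_f)$ is, near the origin, the epigraph of a function $\tilde\psi$ with $\tilde\psi(0)=\tilde\psi'(0)=0$, and the hypothesis $|f(x)-f(x_0)-\p_xf(x_0)(x-x_0)|\le M_0|x-x_0|^2$ translates to a quadratic bound $|\tilde\psi(X)|\le M_0'X^2$ for $|X|<\gamma'$, where $M_0'$ and $1/\gamma'$ are controlled by $M_0$, $1/\gamma$, and $\|f\|_{\Lip(\T)}$. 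These bounds produce the interior strip inclusion \eqref{eq7.33} and an exterior ball at $0$ of radius $\sim 1/M_0'$, so Assumption \ref{assump1} is satisfied for $\tilde U$ with parameters depending only on $\|f\|_{\Lip(\T)}$ and $M_0\gamma$. The transformed function $\tilde u=u\circ T^{-1}$ is harmonic in $\tilde U$, vanishes on $\partial\tilde U$, and satisfies $|\tilde u|\lesssim\|f\|_{W^{1,\infty}(\T)}$ pointwise in $\tilde U_{2r_0}$ thanks to the maximum principle on $\Omega_f$ (after normalizing $\phi\to 0$ as $y\to-\infty$ so that $\|\phi\|_{L^\infty(\Omega_f)}\le\|f\|_{L^\infty(\T)}$).

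Applying Theorem \ref{thm1} to $\tilde u$ at the origin with $r_0$ a definite fraction of $\gamma$, I obtain
\[
|\tilde u(X,Y)-(X,Y)\cdot\na\tilde u(0)|\le Mr_0^{-2-\alpha}\|\tilde u\|_{L^2(\tilde U_{2r_0})}(X^2+Y^2)^{(1+\alpha)/2}.
\]
To extract the factor $\gamma^{-1-\alpha}$ rather than $\gamma^{-2-\alpha}$ claimed in the statement, I would estimate $\|\tilde u\|_{L^2(\tilde U_{2r_0})}\le|\tilde U_{2r_0}|^{1/2}\|\tilde u\|_{L^\infty(\tilde U_{2r_0})}\lesssim r_0\|f\|_{W^{1,\infty}(\T)}$, which converts the $r_0^{-2-\alpha}$ into $r_0^{-1-\alpha}\sim\gamma^{-1-\alpha}$. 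Pulling the estimate back through the isometry $T$ (which preserves distances and sends $\na\tilde u(0)$ to the corresponding gradient of $u$ at $z_0$) and combining with the algebraic identity above yields the corollary, with $\alpha$ depending only on $\|f\|_{W^{1,\infty}(\T)}$ through the local Lipschitz constant of $\tilde\psi$ and $M$ depending only on $\|f\|_{W^{1,\infty}(\T)}$ and $M_0\gamma$.

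The main technical obstacle is the verification that in the rotated coordinates the boundary of $\tilde U$ is a Lipschitz graph over the new horizontal axis on a ball of size comparable to $\gamma$, with quantitative control of the Lipschitz constant. When $\|f\|_{\Lip(\T)}$ is large the rotation angle $\theta_0$ can be arbitrarily close to $\pi/2$, and the rotated curve may fail to be a graph on any scale independent of $\|f\|_{\Lip}$; here one must use the $C^{1,1}$ hypothesis at $x_0$ to ensure that on a subinterval of $(x_0-\gamma,x_0+\gamma)$ of size controlled by $M_0\gamma$ and $\|f\|_{\Lip}$, the curve does remain a graph with small Lipschitz constant after rotation. Once this local parameterization is secured and the size of the domain of representation is quantified, the remaining steps reduce to direct applications of the preceding paragraphs, yielding the precise dependencies of $\alpha$ and $M$ stated in the corollary.
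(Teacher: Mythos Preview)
Your approach is correct and follows the same core strategy as the paper: pass to the auxiliary harmonic function $p=\phi-y$ that vanishes on $\partial\Omega_f$, invoke Theorem~\ref{thm1}, and then recover one factor of $\gamma$ to convert $\gamma^{-2-\alpha}$ into $\gamma^{-1-\alpha}$.

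Two tactical differences are worth recording. First, the paper does not carry out an explicit rotation; it applies Theorem~\ref{thm1} directly, taking the Lipschitz constant $L=\|f\|_{\Lip(\T)}$ in the \emph{original} coordinates (the De Giorgi--Nash--Moser and reverse-H\"older estimates in the proof of Theorem~\ref{thm1} depend only on the Lipschitz character of the domain, which is coordinate-invariant), while Assumption~\ref{assump1} is verified in the rotated frame allowed by the phrase ``in a coordinate system''. This completely bypasses the ``technical obstacle'' you discuss at the end: there is no need to show that the rotated boundary is a Lipschitz graph on any scale. Second, to obtain $\|p\|_{L^2(\Omega_f\cap B_{2\gamma})}\lesssim\gamma$, the paper uses the variational decomposition $\phi=u+\underline g$ from Proposition~\ref{prop:elliptic} with the explicit lift $\underline g(x,y)=f(x)\chi(y-f(x))$, combined with Poincar\'e's inequality on the strip $\{f(x)-2\gamma<y<f(x)\}$; your alternative of bounding $\|\tilde u\|_{L^2}\le |\tilde U_{2r_0}|^{1/2}\|\tilde u\|_{L^\infty}$ also works, but it relies on the maximum principle bound $\|\phi\|_{L^\infty(\Omega_f)}\le\|f\|_{L^\infty(\T)}$, which in the paper is established separately (see \eqref{inf:phij}--\eqref{sup:phij}).
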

\begin{proof}
Assume without loss of generality that $(x_0, f(x_0))=(0, 0)$. Let $p=\phi-y$ so that $p$ is harmonic in $\Omega_f$ and vanishes on $\p\Omega_f$. Since $\p\Omega$ is a Lipschitz graph globally, Theorem \ref{thm1} implies that
\bq\label{Holder:p}
|p(x,y)-(x,y)\cdot\nabla_{x, y} p(0)|\le M|x^2+y^2|^{\frac{1+\alpha}{2}}\gamma^{-2-\alpha}\|p\|_{L^2(\Omega_f\cap B_{2\gamma})},\quad \sqrt{x^2+y^2}<\gamma,
\eq
where $\alpha$ and $M$ are as in the statement. Since $f\in W^{1, \infty}(\T)$, in \eqref{def:vari} we can take $\underline{g}(x, y)=f(x)\chi(y-f(x))$, where the cutoff $\chi$ is supported on $[-1, 0]$ and identically $1$ on $[-\mez, 0]$.  Then $\phi=u+\underline{g}$,  where $u\in \dot H^1_0(\Omega_f)$ and $\| \underline{g}\|_{H^1(\Omega_f)}\le C(\| f\|_{W^{1, \infty}})$. In the bounded strip $\Omega_{f, \gamma}=\{f(x)-2\gamma<y<f(x)\}$, Poincar\'e's inequality yields
\[
\| u\|_{L^2(\Omega_{f, \gamma})}\le 2\gamma\|\na_{x, y} u\|_{L^2(\Omega_{f, \gamma})}\le 2\gamma\| \na_{x, y}\underline{g}\|_{L^2(\Omega_f)}\le \gamma C(\| f\|_{W^{1, \infty}})
\]
upon recalling \eqref{form:vari}. In addition, $\| \underline{g}\|_{L^2(\Omega_f\cap B_{2\gamma})}\le C_0\gamma \| f\|_{L^\infty(\T)}$, where $C_0$ is an absolute constant. It follows that $\| \phi\|_{L^2(\Omega_f\cap B_{2\gamma})}\le  \gamma  C(\| f\|_{W^{1, \infty}})$. Finally, inserting $p=\phi-y$ into \eqref{Holder:p} we conclude the proof.
\end{proof}
\subsubsection{Pointwise comparison principle for Dirichlet-Neumann}
\begin{prop}\label{prop:comparisonDN}
Let $f_1,\, f_2\in W^{1, \infty}(\T)$ be $C^{1, 1}$ at $x_0$. If $f_1(x)\le f_2(x)$ for all $x\in \T$ and $f_1(x_0)=f_2(x_0)$, then  $G(f_j)f_j$ are classically well-defined at $x_0$ and
\bq\label{compare:DN}
\big(G(f_1)f_1\big)(x_0)\ge \big(G(f_2)f_2\big)(x_0).
\eq
\end{prop}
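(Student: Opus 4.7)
The strategy is to realize $G(f_j)f_j(x_0)$ as the one-sided normal derivative at $x_0$ of the harmonic extension $\phi_j$ of $f_j$ to $\Omega_{f_j}$, compare $\phi_2$ with $\phi_1$ on $\Omega_{f_1}$ via a maximum principle, and read off the desired inequality from Hopf's lemma made quantitative by the pointwise $C^{1,\alpha}$ expansion of Corollary \ref{coro:pointwiseelliptic}. First, since each $f_j$ is $C^{1,1}$ at $x_0$, Corollary \ref{coro:pointwiseelliptic} produces $\alpha\in(0,1)$ and local expansions
\[
\phi_j(x,y) = f_j(x_0) + (x-x_0,\,y-f_j(x_0))\cdot \nabla \phi_j(x_0,f_j(x_0)) + O(\rho_j^{1+\alpha}),
\]
for $(x,y)\in \Omega_{f_j}$ with $\rho_j=|(x-x_0,y-f_j(x_0))|$ small. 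This gives the classical meaning of $G(f_j)f_j(x_0)=N_j(x_0)\cdot \nabla \phi_j(x_0,f_j(x_0))$. Since $f_2-f_1\ge 0$ attains its minimum $0$ at $x_0$ and both functions are differentiable there, $\partial_x f_1(x_0)=\partial_x f_2(x_0)$, so $N_1(x_0)=N_2(x_0)=:N$ and the two expansions share the same base point $(x_0,y_0)$ with $y_0=f_1(x_0)=f_2(x_0)$.

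Next, I would normalize the variational solutions so that $\phi_j(x,y)\to 0$ as $y\to -\infty$ (this is permissible because in the periodic-strip geometry each $\phi_j$ has a well-defined horizontal average that tends to a constant at $-\infty$, as can be seen by a Fourier decomposition in $x$ far below the graph, or equivalently from the decay of the single-layer representation). The function $\phi_j-y$ is harmonic in $\Omega_{f_j}$, vanishes on $\partial \Omega_{f_j}$, and tends to $+\infty$ as $y\to-\infty$. Applying the maximum principle on the truncated domain $\Omega_{f_j}\cap\{y>-R\}$ and letting $R\to\infty$ yields $\phi_j\ge y$ in $\Omega_{f_j}$. In particular, on $\partial\Omega_{f_1}\subset \overline{\Omega_{f_2}}$ one has $\phi_2(x,f_1(x))\ge f_1(x)=\phi_1(x,f_1(x))$. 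Setting $w=\phi_2-\phi_1$ on $\Omega_{f_1}\subset\Omega_{f_2}$, the function $w$ is harmonic in $\Omega_{f_1}$, nonnegative on $\partial\Omega_{f_1}$, and tends to $0$ at $-\infty$, so a second truncation-and-maximum-principle argument gives $w\ge 0$ throughout $\Omega_{f_1}$.

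Finally, I would combine $w\ge 0$ with $w(x_0,y_0)=f_2(x_0)-f_1(x_0)=0$ and subtract the two $C^{1,\alpha}$ expansions, which are both valid in $\Omega_{f_1}$ since $\Omega_{f_1}\subset\Omega_{f_2}$, to obtain
\[
w(x,y)=(x-x_0,y-y_0)\cdot(\nabla\phi_2-\nabla\phi_1)(x_0,y_0)+O(\rho^{1+\alpha}),\qquad \rho=|(x-x_0,y-y_0)|.
\]
Probing along the inward ray $(x,y)=(x_0,y_0)+hN$ with $h<0$, which lies in $\Omega_{f_1}$ for $|h|$ small thanks to the $C^{1,1}$ approximation of $f_1$ at $x_0$, the leading term becomes $h\bigl(N\cdot\nabla\phi_2-N\cdot\nabla\phi_1\bigr)(x_0,y_0)=h\bigl(G(f_2)f_2-G(f_1)f_1\bigr)(x_0)$. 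Dividing by $|h|$ and sending $h\to 0^-$ in $w\ge 0$ yields $G(f_1)f_1(x_0)\ge G(f_2)f_2(x_0)$.

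The step I expect to be the most delicate is the global maximum principle argument of paragraph two: unlike the bounded case, rigorous comparison in the semi-infinite periodic strip $\Omega_{f_j}$ requires either Phragm\'en--Lindel\"of-type truncation at $\{y=-R\}$ together with the (nontrivial) asymptotic $\phi_j\to$ constant at $-\infty$, or a testing argument against $w^-\in \dot H^1_0(\Omega_{f_1})$ based on harmonicity of $w$ in the variational sense. Everything else is local and driven by Corollary \ref{coro:pointwiseelliptic}; the equality $\partial_x f_1(x_0)=\partial_x f_2(x_0)$ at the touching point is what allows the two quadratic remainders to be absorbed into a single $O(\rho^{1+\alpha})$ term when we subtract the expansions.
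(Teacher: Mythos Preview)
Your plan is essentially the paper's proof: pass to the harmonic extensions $\phi_j$, show $p_j:=\phi_j-y\ge 0$, compare $\phi_2$ with $\phi_1$ on the smaller domain $\Omega_{f_1}$ to get $\phi_2\ge\phi_1$ there, and extract the normal-derivative inequality from the pointwise $C^{1,\alpha}$ expansion at the touching point, using $N_1(x_0)=N_2(x_0)$.

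The one point that needs correction is your truncation-and-normalization paragraph. You cannot simultaneously impose $\phi_j\to 0$ as $y\to-\infty$ and $\phi_j\vert_{\partial\Omega_{f_j}}=f_j$; the constant you would subtract is the limiting horizontal mean, which need not vanish, so $\phi_j-y$ would no longer vanish on the boundary. More importantly, for $w=\phi_2-\phi_1$ the limits at $-\infty$ are the respective horizontal means, and there is no reason these agree, so ``$w\to 0$ at $-\infty$'' is unjustified and the Phragm\'en--Lindel\"of truncation does not close. The paper avoids this entirely by the variational testing you mention as your alternative: first test with $\min\{\phi_j-\inf_\T f_j,0\}$ and $\max\{\phi_j-\sup_\T f_j,0\}$ to get $\inf_\T f_j\le \phi_j\le \sup_\T f_j$, which in particular makes $p_j^-=\min\{p_j,0\}$ vanish for $y\le\inf_\T f_j$ and hence lie in $\dot H^1_0(\Omega_{f_j})$; testing the equation for $p_j$ against $p_j^-$ then gives $p_j\ge 0$. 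The comparison $\phi_2\ge\phi_1$ on $\Omega_{f_1}$ is obtained the same way, testing against $\max\{p-\sup_{x}(p(x,f_1(x))),0\}$ where $p=\phi_1-\phi_2$. So your instinct about which step is delicate is correct, and your proposed fix (variational testing) is exactly what the paper does; just discard the normalization/truncation sketch.
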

\begin{proof}
Let $\phi_j$ be the harmonic extension of $f_j$ to $\Omega_j\equiv\Omega_{f_j}$ as given by Proposition \ref{prop:elliptic}. Then $p_j:=\phi_j-y$ are harmonic in $\Omega_j$ and identically vanish on $\p\Omega_j=\{(x, y): y=f_j(x)\}$. Since $x_0$ is a $C^{1, 1}$ boundary point, Theorem \ref{thm1} implies that the normal derivatives  $\p_{N_j}p_j$, where $N_j(x)=(-\p_x f_j(x), 1)$, are classically well-defined at $x_0$. 
Consequently, in view of the definition \eqref{def:Gh}, we have
\bq\label{Gff:p}
\begin{aligned}
\big(G(f_j)f_j\big)(x_0)=\p_{N_j(x_0)}(p_j+y)(x_0, f_j(x_0))=\p_{N_j(x_0)}p_j(x_0, f_j(x_0))+1
\end{aligned}
\eq
are classically well-defined at $(x_0, f(x_0))$. We first claim that
\bq\label{Pj:positive}
p_j(x, y)\ge 0\quad a.e.~ \Omega_j.
\eq
If $\phi_j$ are smooth then so do $p_j$ and \eqref{Pj:positive} is a consequence of the maximum principle for harmonic functions. Here, following Proposition 4.3  \cite{NguyenPausader2019} we prove \eqref{Pj:positive} for variational solutions. Indeed, recall from \eqref{vari:phi} that
\bq\label{vari:phij}
\int_{\Omega_j}\na_{x, y}\phi\cdot \nabla_{x, y}\varphi dxdy=0\quad\forall \varphi\in \dot H^1_0(\Omega_j).
\eq
Inserting $\varphi=\min\{\phi-\inf_{\T^d}f_j, 0\}\in \dot H^1_0(\Omega_j)$ into \eqref{vari:phij} gives
\bq\label{inf:phij}
\inf_{\Omega_j}\phi_j\ge \inf_{\T^d}f_j.
\eq
On the other hand, by choosing $\varphi=\max\{\phi-\sup_{\T^d}f_j, 0\}$ we obtain
\bq\label{sup:phij}
\sup_{\Omega_j}\phi_j\le \sup_{\T^d}f_j.
\eq
It follows from \eqref{inf:phij} that $p_j\ge 0$ for $y\le \inf_{\T^d} f_j$.
Hence, $p_j^-=\min\{p_j, 0\}$ identically vanishes in $\{ (x, y): y\le \inf_{\T^d}f_j\}$ and $p_j^-\in \dot H^1_0(\Omega_j)$. Thus inserting $\varphi=p_j^-$ into \eqref{vari:phij} and replacing $\phi_j=p_j+y$ we obtain
\begin{align*}
\int_{\Omega_j}\na_{x, y}p_j\cdot \nabla_{x, y}p_j^-dxdy&=-\int_{\Omega_j}\p_yp_j^- dxdy\\
&=-\int_{ \inf_{\T^d} f_j<y<f_j(x)}\p_yp_j^- dxdy=p^-_j(x, \inf_{\T^d} f_j)=0.
\end{align*}
This implies $\int_{\Omega_j}| \na_{x, y}p_j^-|^2=0$ and thus $p_j\ge 0$ a.e. $\Omega_j$. We obtain the claim \eqref{Pj:positive}.

Since $f_1\le f_2$, we have $\Omega_1\subset \Omega_2$ and  $p:=p_1-p_2$ satisfies $p(x, f_1(x))=-p_2(x, f_1(x))\le 0$ by virtue of \eqref{Pj:positive}. In addition, noticing   $p=\phi_1-\phi_2$ we find from \eqref{vari:phij} that
\[
\int_{\Omega_1}\na_{x, y}p\cdot \nabla_{x, y}\varphi dxdy=0\quad\forall \varphi\in \dot H^1_0(\Omega_1).
\]
Then as for \eqref{sup:phij}, upon choosing $\varphi=\max\big\{p-\sup_{x\in\T^d}(p(x, f_1(x))),  0\big\}$ we obtain
\[
\sup_{\Omega_1} p\le \sup_{x\in\T^d}(p(x, f_1(x)))\le 0.
\]
Therefore, $p$ attains the zero maximum at the boundary point $(x_0, f_1(x_0))$. In addition, we have $N_1(x_0)=N_2(x_0)$ since $\p_xf_1(x_0)=\p_xf_2(x_0)$ at the maximum point $x_0$ of $f_1-f_2$. We deduce that
\[
0\le  \p_{N_1(x_0)}p(x_0, f_1(x_0))=\p_{N_1(x_0)}p_1(x_0, f_1(x_0))- \p_{N_2(x_0)}p_2(x_0, f_2(x_0)).
 \]
Then \eqref{compare:DN} follows from this and \eqref{Gff:p}.
\end{proof}
\begin{rema}
Proposition \ref{prop:comparisonDN} is valid in all dimensions. Indeed, Theorem \ref{thm1} was only used to conclude that the normal derivatives $\p_{N_j}p_j$ are classically well-defined at $(x_0, f_j(x_0))$. However, the same conclusion can be deduced from  \cite[Lemma 11.17]{CaffarelliSalsa2005} which is valid in all dimensions. On the other hand, the stronger $C^{1, \alpha}$ regularity  obtained in Theorem \ref{thm1} will be crucial in the proof of the comparison principle for viscosity solutions in Section \ref{section:viscosity}.
\end{rema}
\section{Contour dynamics}
In this section, we use  potential theory to express the Dirichlet-Neumann operator, thereby obtaining a contour dynamics reformulation of the Muskat problem.

The Newtonian kernel for $\T\times \Rr$ is
\bq\label{Newton}
\cN(z)=\frac{1}{4\pi}\ln\big(\cosh y-\cos x\big),\quad z=(x, y)\in \T\times \Rr.
\eq
We shall identify functions defined on $\T$ to functions defined on the graph $\Sigma=\{(x, f(x)): x\in \T\}$. Precisely,  $h:\T\to \Rr$ is identified to  $\wt h:\Sigma\to \Rr$  defined by $\wt h(x, f(x))=h(x)$. The double layer potential associated to  $\Omega_f$ for a function $h:\T\to \Rr$ is
\bq\label{doubleL}
\begin{aligned}
\cK[f] h(z)&:=-\int_{\Sigma}(\p_{n(x')}\cN)(z- z')\wt h(z')dz'\\
&= \frac{1}{4\pi}\int_{\T} \frac{\sin(x-x')\p_xf(x')-\sinh(y-f(x'))}{\cosh(y-f(x'))-\cos(x-x')}h(x')dx',
\end{aligned}
\eq
where $z=(x, y)\in (\T\times\Rr)\setminus \Sigma$ and $z'=(x', f(x'))\in \Sigma$. Clearly, $\cK[f] h(z)$ is harmonic on $(\T\times \Rr)\setminus \Sigma$. A direct calculation using (hyperbolic) trigonometric identities gives
\[
\p_{x'}\Big(\arctan\frac{\tanh(\frac{y-f(x')}{2})}{\tan(\frac{x-x'}{2})}\Big)=-\mez \frac{\sin(x-x')\p_xf(x')-\sinh(y-f(x'))}{\cosh(y-f(x'))-\cos(x-x')}.
\]
Hence,
\bq\label{doubleL:2}
\cK[f] h(z)=-\frac{1}{2\pi}\int_{\T}\p_{x'}\Big(\arctan\frac{\tanh(\frac{y-f(x')}{2})}{\tan(\frac{x-x'}{2})}\Big)h(x')dx'.
\eq
The single layer potential is
\bq
\begin{aligned}
S[f]h(x, y)=\frac{1}{4\pi}\int_\T \ln\big(\cosh (y-f(x'))-\cos (x-x')\big)h(x')dx',\quad (x, y)\in \T\times \Rr.
\end{aligned}
\eq
Note that the preceding single layer potential differs from the one in \cite{Fabes1978, Verchota1984} by the arc length element $\sqrt{1+|\p_xf(x')|^2}$.

The boundary double layer potential is
\bq\label{K:arctan}
\begin{aligned}
K[f] h(x)
&=:p.v.\frac{1}{4\pi}\int_{\T} \frac{\sin(x-x')\p_xf(x')-\sinh(f(x)-f(x'))}{\cosh(f(x)-f(x'))-\cos(x-x')}h(x')dx'\\
&=-p.v.\frac{1}{2\pi}\int_{\T} \p_{x'}\Big(\arctan\frac{\tanh(\frac{f(x)-f(x')}{2})}{\tan(\frac{x-x'}{2})}\Big)h(x')dx',
\end{aligned}
\eq
where $z=(x, f(x))$ and $z'=(x', f(x'))$.
The adjoint of $K[f]$ is
\bq\label{def:K*}
\begin{aligned}
K^*[f] h(x)&=-p.v.\frac{1}{2\pi}\int_{\T}\p_{x}\Big(\arctan\frac{\tanh(\frac{f(x)-f(x')}{2})}{\tan(\frac{x-x'}{2})}\Big)h(x')dx'\\
&=p.v.\frac{1}{4\pi}\int_{\T} \frac{\sinh(f(x)-f(x'))-\sin(x-x')\p_xf(x)}{\cosh(f(x)-f(x'))-\cos(x-x')}h(x')dx'.
\end{aligned}
\eq
If $f\in \Lip(\T)$ and $h\in L^p(\T)$ for some $p\in (1, \infty)$ then the nontangential limit
\bq
\lim_{\Omega\ni z'\to z}\mathcal{K}h(z')=(\mez I+K)h(z)
\eq
holds a.e. $z\in \T$ (see Theorem 1.10 \cite{Verchota1984}). Consequently, the unique solution $\phi$ of the Dirichlet problem \eqref{elliptic:G}
is given by
\bq\label{elliptic:DL}
\phi=\mathcal{K}(\mez I+K)^{-1}g.
\eq
It was proved in Section 4 \cite{Verchota1984} that for Lipschitz domains, the mappings
\begin{align}\label{invert:1}
&\mez I+K: L^p(\T)\to L^p(\T), \quad p\in [2, \infty),\\ \label{invert:2}
&\mez I+K: W^{1, p}(\T)\to W^{1, p}(\T), \quad p\in (1, 2],\\ \label{invert:3}
& \mez I-K^*: L^p_0(\T)\to L^p_0(\T), \quad p\in (1,  2]
\end{align}
are invertible. Here, $L^p_0(\T)$ denotes the space of zero mean functions in $L^p(\T)$.
\begin{prop}\label{prop:DNcontour}
For  $f\in\Lip(\T)$ and $g\in H^1(\T)$, we have for a.e. $x\in\T$ that
\begin{align}
\label{DN:contour12}
(G(f)g)(x)&=\frac{1}{4\pi}p.v.\int_\T \frac{\sin(x-x')+\sinh(f(x)-f(x'))\p_xf(x)}{\cosh(f(x)-f(x'))-\cos(x-x')}\tt(x')dx'\\ \label{DN:contour1}
&=\frac{1}{4\pi}p.v.\int_{\T}\p_x\ln\Big(\cosh(f(x)-f(x'))-\cos(x-x')\Big)\tt(x')dx',
\end{align}
where
\bq\label{tt:dg}
\tt=(\mez I-K^*)^{-1}(\p_xg).
\eq
\end{prop}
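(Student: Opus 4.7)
The plan is to realize $G(f)g$ through a harmonic conjugate represented as a single layer potential, exploiting the duality between normal and tangential derivatives available in two dimensions.

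First, $\tt=(\tfrac12 I-K^*[f])^{-1}\p_x g$ is well-defined in $L^2_0(\T)$: since $g\in H^1(\T)$, $\p_x g\in L^2_0(\T)\subset L^p_0(\T)$ for $p\in(1,2]$, so the Fabes--Verchota invertibility \eqref{invert:3} applies. I would then set
\[
\psi(z):=S[f]\tt(z),\quad z\in\Omega_f,
\]
which is harmonic in $\Omega_f$ with $\na\psi\in L^2(\Omega_f)$ by the single-layer bounds (the zero mean of $\tt$ is what ensures $S[f]\tt\to 0$ as $y\to -\infty$, eliminating the logarithmic part of the Newton kernel). The interior nontangential jump formula for the single layer gives
\[
(\p_N\psi)^-(x)=(-\tfrac12 I+K^*[f])\tt(x)=-\p_x g(x)\quad\text{a.e. }x\in\T.
\]

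The core step is to prove $\na\psi=(\p_y\phi,-\p_x\phi)$ in $\Omega_f$, where $\phi$ is the harmonic extension of $g$ from Proposition~\ref{prop:elliptic}. To this end, define
\[
\omega:=\na\psi-(\p_y\phi,-\p_x\phi)\in L^2(\Omega_f).
\]
A direct check shows each component of $\omega$ is harmonic, $\omega$ is curl-free and divergence-free, and on $\Sigma$
\[
\omega\cdot N=\p_N\psi+f'\p_y\phi+\p_x\phi=-\p_x g+\p_x\bigl[\phi(x,f(x))\bigr]=0\quad\text{a.e.}
\]
Because $\Omega_f$ is a periodic strip, one decomposes $\omega=\na u+(c,0)$ with $u$ single-valued; the cohomology constant $c$ must vanish because $(c,0)\notin L^2(\Omega_f)$ for $c\ne 0$, equivalently because an $L^2$ Fourier expansion of $\omega_1$ in the half-strip $\{y<\min f\}$ forces the $k=0$ mode to vanish. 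Thus $u\in\dot H^1(\Omega_f)$ is harmonic with $\p_N u=0$ on $\Sigma$, and the standard Neumann energy identity $\int_{\Omega_f}|\na u|^2=0$ forces $u$ to be constant, so $\omega\equiv 0$.

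Consequently, at the boundary
\[
G(f)g(x)=\p_N\phi(x,f(x))=-f'\p_x\phi+\p_y\phi=f'\p_y\psi+\p_x\psi=\p_x\bigl[\psi(x,f(x))\bigr]\quad\text{a.e. }x\in\T.
\]
Differentiating under the integral (justified in the principal-value sense by $L^2$-boundedness of the associated singular integral together with approximation by smooth data) yields \eqref{DN:contour12}, and \eqref{DN:contour1} is the same identity after recognizing the kernel as $\p_x\ln(\cosh(f(x)-f(x'))-\cos(x-x'))$. The main technical obstacle is to justify the nontangential boundary identities in the Lipschitz/$L^2$ regime, which is precisely where the Fabes--Verchota layer potential theory is indispensable; once $\na\psi=(\p_y\phi,-\p_x\phi)$ is secured, the formula follows by direct computation.
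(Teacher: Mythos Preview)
Your argument is correct and reaches the same endpoint as the paper, but by a genuinely different route. The paper represents the harmonic extension via the \emph{double} layer potential, $\phi=\cK[f]\Theta$ with $\Theta=(\tfrac12 I+K)^{-1}g$, sets $\tt=\p_x\Theta$, and uses the commutation identity $\p_x K[f]\Theta=-K^*[f]\tt$ (which follows from the $\arctan$ form \eqref{K:arctan}) to obtain $\tt=(\tfrac12 I-K^*)^{-1}\p_xg$. Then it computes $\p_x\phi$ and $\p_y\phi$ explicitly from the double-layer kernel and observes by direct inspection that $N(x)\cdot\na\phi(\tilde x,y)=T(x)\cdot\na S[f]\tt(\tilde x,y)$; the nontangential limit is then justified by Verchota's theorem on continuity of the tangential derivative of the single layer up to the boundary. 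Your approach instead starts from the single layer $\psi=S[f]\tt$, shows $\psi$ is a harmonic conjugate of $\phi$ by a Neumann uniqueness argument on the difference $\omega$, and then reads off $G(f)g$ as the tangential derivative of $\psi$ on $\Sigma$.

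Both arguments land on $G(f)g=T\cdot\na S[f]\tt$ and both ultimately need the same Verchota boundary continuity to justify the principal-value formula; your phrase ``differentiating under the integral'' is exactly this step. The paper's route is shorter and purely computational, avoiding the cohomology/uniqueness discussion in the unbounded periodic strip. Your route is more conceptual (it explains \emph{why} the normal derivative of $\phi$ equals the tangential derivative of $\psi$: they are harmonic conjugates), at the cost of extra care with $\na\psi\in L^2(\Omega_f)$, the period constant $c$, and the Neumann energy identity in an unbounded domain. All three points are handled in the paper in another place (the proof of Proposition~\ref{prop:potential} establishes the needed $L^2$ and decay properties of $\na S[f]\eta$ for $\eta\in L^2_0$), so your plan can be completed rigorously.
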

\begin{proof}
Applying \eqref{elliptic:DL} we have that the unique solution $\phi$ to \eqref{elliptic:G} is $\phi=\mathcal{K}\Theta$ with $\Theta=(\mez I+K)^{-1}g$. Note that $\Theta\in H^1(\T)$ in view of \eqref{invert:2}. Setting $\tt=\p_x\Theta\in L^2(\T)$ we deduce from \eqref{doubleL:2} that
\[
\phi(x, y)=\frac{1}{2\pi}\int_{\T}\arctan\frac{\tanh(\frac{y-f(x')}{2})}{\tan(\frac{x-x'}{2})}\tt(x')dx',\quad (x, y)\in \T\times \Rr.
\]
It follows from \eqref{K:arctan} and \eqref{def:K*}  that
\begin{align*}
\p_xK[f]\Theta(x)&=p.v.\frac{1}{2\pi}\int_{\T} \p_{x}\Big(\arctan\frac{\tanh(\frac{f(x)-f(x')}{2})}{\tan(\frac{x-x'}{2})}\Big)\p_x\Theta(x')dx'=-K^*[f]\theta(x),
\end{align*}
whence $\tt=\p_x(\mez I+K)^{-1}g=(\mez I-K^*)^{-1}\p_xg$.

For $(x, y)\in (\T\times \Rr)\setminus\Sigma$,  we compute
\begin{align*}
&\p_x\phi(x, y)=-\frac{1}{4\pi}\int_\T \frac{\sinh(y-f(x'))}{\cosh(y-f(x'))-\cos(x-x')}\tt(x')dx',\\
&\p_y\phi(x, y)=\frac{1}{4\pi}\int_\T \frac{\sin(x-x')}{\cosh(y-f(x'))-\cos(x-x')}\tt(x')dx'.
\end{align*}
Set  $T(x)=(1, \p_xf(x))$ to be the tangent to $\Sigma$. For $\Omega_f\ni (\wt x, y)\to_N (x, f(x))$, by the  definition \eqref{def:Gh} and the mean-value theorem, we have
\begin{align*}
(G(f)g)(x)&=\lim_{(\wt x, y)\to_N (x, f(x))} N(x)\cdot(\na_{x, y}\phi)(\wt x, y)\\
&=\lim_{(\wt x, y)\to_N (x, f(x))}\frac{1}{4\pi}\int_\T \frac{\sinh(y-f(x'))\p_xf(x)+\sin(\wt x-x')}{\cosh(y-f(x'))-\cos(\wt x-x')}\tt(x')dx'\\
&=\lim_{(\wt x, y)\to_N (x, f(x))}T(x)\cdot \na S[f]\tt(\wt x, y).
\end{align*}
Since $f\in \Lip(\T)$ and $\tt\in L^2(\T)$,  Theorem 1.6 \cite{Verchota1984} asserts that the tangential derivative of $S[f]\tt$ is continuous up to the boundary. That is,
\[
\lim_{(\wt x, y)\to_N (x, f(x))}T(x)\cdot \na S[f]\tt(\wt x, y)=T(x)\cdot \na S[f]\tt(x, f(x))
\]
 for a.e. $x\in \T$. Therefore, for a.e. $x\in \T$ we obtain
\bq\label{DN:S}
\begin{aligned}
(G(f)g)(x)&=T(x)\cdot \na S[f]\tt(x, f(x))\\
&=\frac{1}{4\pi}p.v.\int_\T \frac{\sinh(f(x)-f(x'))\p_xf(x)+\sin(x-x')}{\cosh(f(x)-f(x'))-\cos(x-x')}\tt(x')dx',
\end{aligned}
\eq
proving \eqref{DN:contour12}. Finally, since
\bq\label{SL-DL}
\frac{\sin(x-x')+\sinh(f(x)-f(x'))\p_xf(x)}{\cosh(f(x)-f(x'))-\cos(x-x')}
=\p_x\ln\Big(\cosh(f(x)-f(x'))-\cos(x-x')\Big),
\eq
we obtain \eqref{DN:contour1}.
\end{proof}
%
Proposition \ref{prop:reform} follows at once from Propositions \ref{prop:reformDN} and \ref{prop:DNcontour}.
\section{Quantitative bound for inverse of double layer potential}\label{Section:potential}
According to \eqref{invert:3}, $\mez I-K^*: L^2_0(\T)\to L^2_0(\T)$ in invertible provided that the boundary $f$ of $\Omega_f$ is Lipschitz. This is sufficient to deduce the unique solvability of the Neumann problem in Lipschitz domain with $L^2$ data  \cite{Verchota1984}. However, in order to obtain uniform bounds for approximate solutions of the Muskat problem we shall need a {\it quantitative bound} for the inverse of $\mez I-K^*$. This is the content of the next proposition. 
\begin{prop}\label{prop:potential}
There exists a universal constant $C>0$ such that
\bq\label{boundDLP}
\| (\mez I\pm K^*)^{-1}\|_{L^2_0(\T)\to L^2_0(\T)}\le C(1+\| f\|_{\Lip(\T)})^{\frac{5}{2}}.
\eq
\end{prop}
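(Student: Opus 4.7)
The plan is to follow the Rellich-identity approach of Jerison--Kenig and Verchota \cite{Verchota1984}, tracking explicitly the dependence on $L:=\|f\|_{\Lip(\T)}$. The strategy is to establish, for every $\tau\in L^2_0(\T)$, the two-sided operator equivalence
\[
C^{-1}(1+L)^{-2}\|(\mez I+K^*[f])\tau\|_{L^2(\Sigma)}\le \|(\mez I-K^*[f])\tau\|_{L^2(\Sigma)}\le C(1+L)^{2}\|(\mez I+K^*[f])\tau\|_{L^2(\Sigma)},
\]
and then to deduce the proposition using the trivial decomposition $\tau=(\mez I+K^*)\tau+(\mez I-K^*)\tau$, together with the conversion between $L^2(\Sigma)$ and $L^2(\T)$ via $d\sigma=(1+|\p_xf|^2)^{1/2}\,dx$, which costs a further factor of $(1+L)^{1/2}$ and accounts precisely for the exponent $5/2$ in the stated bound.

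Concretely, given $\tau\in L^2_0(\T)$, set $u:=S[f]\tau$. By construction $u$ is harmonic in both $\Omega_f$ and $(\T\times\Rr)\setminus\overline{\Omega_f}$; the zero-mean hypothesis on $\tau$ together with the asymptotics of the periodic Newtonian kernel \eqref{Newton} ensures that $\na_{x,y} u$ decays as $|y|\to\infty$. One may therefore apply the Rellich identity
\[
\int_{\Sigma}(\alpha\cdot n)|\na_{x,y} u|^2\,d\sigma=2\int_{\Sigma}(\alpha\cdot\na_{x,y} u)(\p_n u)\,d\sigma
\]
with the constant vertical vector field $\alpha=(0,1)$, separately from inside and outside $\Omega_f$ (by truncating to a slab $\{-M<y<M\}$ and sending $M\to\infty$). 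Since $\alpha\cdot n=|N|^{-1}\ge(1+L^2)^{-1/2}$ and $|\alpha\cdot T|\le 1$, decomposing $\na_{x,y} u=T\,\p_T u+n\,\p_n u$ and applying a weighted Young inequality yields
\[
\|\p_T u^{\pm}\|_{L^2(\Sigma)}\le C(1+L)\,\|\p_n u^{\pm}\|_{L^2(\Sigma)},\qquad \|\p_n u^{\pm}\|_{L^2(\Sigma)}\le C(1+L)\,\|\p_T u^{\pm}\|_{L^2(\Sigma)}.
\]
The tangential derivative of $S[f]\tau$ is continuous across $\Sigma$ (Theorem~1.6 of \cite{Verchota1984}, as already used in the proof of Proposition~\ref{prop:DNcontour}), so chaining the two inequalities and invoking the standard jump relations $\p_n u^{\pm}=(\mp\mez I+K^*[f])\tau$ on $\Sigma$ produces the desired two-sided equivalence with constant $C(1+L)^{2}$.

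The main technical difficulty is the careful accounting of the power of $(1+L)$: the parameter in the Young splitting at the Rellich step must be tuned to balance $\|\p_T u\|^2$ against $\|\p_n u\|^2$ with scale precisely $(1+L^2)^{-1/2}$, so that one does not accumulate spurious factors of $(1+L)^{1/2}$; and the passage between $L^2(\T)$ and $L^2(\Sigma)$ should be performed only once at the very end. A secondary obstacle, standard but requiring explicit verification, is the justification of the Rellich identity on the unbounded periodic strip $\Omega_f$: one must show that the boundary integrals on the bottom face of the truncated slab vanish in the limit, which follows from the uniform decay of $\na_{x,y}S[f]\tau$ as $|y|\to\infty$; this decay in turn uses crucially the hypothesis $\int_{\T}\tau\,dx=0$, since otherwise $S[f]\tau$ grows linearly in $y$ and the truncation argument fails.
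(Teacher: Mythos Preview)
Your approach is correct and in fact somewhat cleaner than the paper's own argument. Both proofs follow the Jerison--Kenig/Verchota scheme: set $H=S[f]\tau$, use a Rellich identity on each side of $\Sigma$ to show that $\|\p_\tau H\|$ and $\|\p_n H^{\pm}\|$ are comparable, chain through the continuity of the tangential derivative, invoke the jump relations $\p_N H^{\pm}=(\mp\tfrac12 I+K^*)\tau$, and then pass from the two-sided equivalence to the norm bound on the inverse via $\tau=(\tfrac12 I+K^*)\tau+(\tfrac12 I-K^*)\tau$.

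The genuine difference is in the choice of vector field in the Rellich identity. The paper uses a compactly supported mollification $V$ of the unit normal (see \eqref{defV}--\eqref{conditiondelta}), which is the construction needed for general Lipschitz domains as in \cite{Verchota1984}. This introduces the volume terms $\int_\Omega(\nabla\cdot V)|\nabla H|^2$ and $\int_\Omega 2(\nabla V\nabla H)\cdot\nabla H$, and an additional Poincar\'e-type step \eqref{intparts}--\eqref{Poincare} is required to absorb them back into boundary quantities. You instead exploit the graph structure of $\Omega_f$: the constant field $\alpha=(0,1)$ is uniformly transversal, $\alpha\cdot n=|N|^{-1}\ge(1+L^2)^{-1/2}$, and since $\nabla\alpha=0$ the volume terms vanish outright. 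This yields the pair of inequalities $\|\p_\tau H\|_{L^2(\Sigma)}\le C(1+L)\|\p_n H^{\pm}\|_{L^2(\Sigma)}$ and $\|\p_n H^{\pm}\|_{L^2(\Sigma)}\le C(1+L)\|\p_\tau H\|_{L^2(\Sigma)}$ directly, with the final factor $(1+L)^{1/2}$ coming, as you say, from the passage $L^2(\Sigma)\leftrightarrow L^2(\T)$ via $d\sigma=|N|\,dx$. The resulting exponent matches the paper's $\tfrac52$. Your remark on the decay of $\nabla S[f]\tau$ as $|y|\to\infty$ under the zero-mean hypothesis is exactly what is needed to justify the truncation, and the paper makes the same observation. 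In short: same skeleton, but your choice of Rellich field is the natural simplification available for subgraph domains, and it dispenses with the Poincar\'e step entirely.
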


\begin{proof}
	In order to get this result, we will show that there exists a constant $M=\bar C(1+\|\partial_x f\|_{L^\infty(\T)})^{5/2}$ with $\bar C>1$ such that
\begin{equation}\label{comparable}
\frac{1}{M}\leq \frac{\|(\frac12I-K^*[f])(\eta)\|_{L_0^2(\T)}}{\|(\frac12I+K^*[f])(\eta)\|_{L_0^2(\T)}}\leq M \quad \forall \eta\in L_0^2(\T).
\end{equation}
We claim that \eqref{comparable} yields
\begin{equation}\label{invertiblebound}
\|\eta\|_{L^2_0(\T)}\leq 2M\|(\frac12I-K^*[f])(\eta)\|_{L^2_0(\T)}\quad\forall \eta\in L_0^2(\T).
\end{equation}
This in turn implies the desired bound \eqref{boundDLP}. Indeed, if \eqref{invertiblebound} is false then  there exists $\eta_1\in L^2_0(\T)$ with $\|\eta_1\|_{L^2_0(\T)}=1$ such that $$\|(\frac12I-K^*[f])(\eta_1)\|_{L^2_0(\T)}<\frac{1}{2M}.$$ Then $$\|(\frac12I+K^*[f])(\eta_1)\|_{L^2_0(\T)}\geq \|\eta_1\|_{L^2_0(\T)}-\|(\frac12I-K^*[f])(\eta_1)\|_{L^2_0(\T)}>1-\frac1{2M}>\frac12,$$
but using \eqref{comparable} again we find the opposite:
$$\|(\frac12I+K^*[f])(\eta_1)\|_{L^2_0(\T)}\leq M\|(\frac12I-K^*[f])(\eta_1)\|_{L^2_0(\T)}<\frac{1}{2}.$$
This finishes the proof of \eqref{invertiblebound}.
Now we prove that
\begin{equation}\label{cpcomp}
\|(\frac12I-K^*[f])(\eta)\|_{L_0^2(\T)}\leq M\|(\frac12I+K^*[f])(\eta)\|_{L_0^2(\T)}\quad\forall \eta\in L^2_0(\T).
\end{equation}
Recall that the single layer potential given by
\begin{equation}\label{SLP}
S[f](\eta)(x,y)=\frac{1}{4\pi}\int_{-\pi}^\pi
\ln\Big(\cosh(y-f(x'))-\cos(x-x')\Big)\eta(x')dx'=H(x,y).
\end{equation}
With $\Omega=\{(x, y): x\in \T, y<f(x) \}$, $H$ satisfies (see \cite{Verchota1984})
\bq\label{ellipticH}
\begin{cases}
	\Delta_{x, y}H(x,y)=0,\quad (x, y)\in \Omega\cup(\Rr^2\setminus\overline{\Omega}),\\
	\partial_N H(x,f(x))=(-\frac12I+K^*[f])(\eta)(x)\quad \text{a.e. }x\in \T,\\
	\partial^c_N H(x,f(x))=(\frac12I+K^*[f])(\eta)(x)\quad \text{a.e. } x\in \T,
\end{cases}
\eq
where $\p_N$ is the normal derivative from inside the domain $\Omega$ and $\p_N^c$ is to the normal derivative from inside the complementary set $\Omega^c=\Rr^2\setminus\overline{\Omega}$. By Theorem 1.6 \cite{Verchota1984}, the tangential derivative $\p_\tau H$ is continuous across the $\p\Omega$. Thus we shall not distinguish the tangential derivative of $H$ when approaching $\p\Omega$ from either side. Since $\int_\T \eta dx=0$ we can write
\[
\begin{aligned}
H(x, y)&=\int_\T \ln \Big(\frac{\cosh(y-f(x'))-\cos(x-x')}{\cosh y-\cos x}\Big)\eta(x')dx'\\
&=\int_\T \ln \Big(\frac{\sinh^2(\frac{y-f(x')}{2})+\sin^2(\frac{x-x'}{2})}{\sinh^2\frac{y}{2}+\sin^2\frac{x}{2}}\Big)\eta(x')dx'
\end{aligned}
\]
and deduce that as $|y|\to \infty$, $H(x, y)$ is bounded and $\na H$ decay uniformly in $x\in \T$.

 We consider a compactly supported vector field $V(x,y)$ given by
\begin{equation}\label{defV}
V(x,y)=(\Gamma_{\delta}*G)(x,y),
\end{equation}
where the function $G(x,y)$ is given by
\bq
G(x,y)=\begin{cases}
	n(x),\quad x\in\T,\quad |y|\leq 2\|f\|_{L^\infty(\T)}+2,\\
	0\quad \mbox{elsewhere},
\end{cases}
\eq
and $\Gamma_{\delta}(x,y)$ is a smooth approximation of the identity with compact support. The parameter $\delta>0$ is taken small enough so that
\begin{equation}\label{conditiondelta}
V(x,f(x))\cdot n(x)\geq \mez \quad \forall x\in\T.
\end{equation}
One could check that
\begin{equation}\label{boundsforV}
\|V\|_{L^\infty}\leq 1\quad \mbox{and}\quad  \|\nabla V\|_{L^\infty}\leq C=O\big(\delta^{-1}\big),
\end{equation}
where $C$ is independent of $f$.

Let us recall the Rellich identity for $V$ and $H$:
\begin{equation}\label{Rellich}
\int_{\partial\Omega}V\cdot n|\nabla H|^2d\sigma=\int_{\partial\Omega}2\p_nH (V\cdot \nabla H)d\sigma+\int_{\Omega}(\nabla\cdot V) |\nabla H|^2dX-\int_{\Omega}2(\nabla V\nabla H)\cdot \nabla H dX,
\end{equation}
where $\na V$ is a $2\times 2$ matrix acting on $\na H$. Using this together with the normal and tangential decomposition for $\nabla H=(\partial_n H) n+(\p_{\tau} H )\tau$ gives
\begin{align*}
\int_{\partial\Omega}V\cdot n|\p_n H|^2d\sigma=&-\int_{\partial\Omega}2\p_nH\p_\tau H V\cdot\tau d\sigma+\int_{\partial\Omega}V\cdot n|\p_\tau H|^2d\sigma\\
&-\int_{\Omega}(\nabla\cdot V) |\nabla H|^2dX+\int_{\Omega}2(\nabla V\nabla H)\cdot \nabla H dX.
\end{align*}
Condition \eqref{conditiondelta}, Cauchy-Schwarz and Young's inequalities yield
\begin{align*}
\frac14\int_{\partial\Omega}|\p_n H|^2d\sigma\leq&(1+8\|V\|_{L^\infty})\|V\|_{L^\infty}\int_{\partial\Omega}|\p_\tau H|^2d\sigma+3\|\nabla V\|_{L^\infty}\int_{\Omega}|\nabla H|^2dX.
\end{align*}
Plugging in the bounds for $V$ in \eqref{boundsforV} we find that
\begin{align}\label{yacasi}
\int_{\partial\Omega}|\p_n H|^2d\sigma\leq&36\int_{\partial\Omega}|\p_\tau H|^2d\sigma+12C\int_{\Omega}|\nabla H|^2dX.
\end{align}
Using the boundedness of $H$ and the decay of $\na H$ as $|y|\to \infty$, we can integrate by parts to obtain
\begin{equation}\label{intparts}
\int_{\Omega}|\nabla H|^2dX=\int_{\Omega}\nabla H\cdot \na [H-H(-\pi,f(-\pi))]dX=\int_{\partial\Omega}[H-H(-\pi,f(-\pi))]\p_n H d\sigma.
\end{equation}
By the fundamental theorem of calculus and Cauchy-Schwarz's inequality,
\[
\begin{aligned}
&\left|H(x, f(x))-H(-\pi,f(-\pi))\right|=\left|\int_{-\pi}^x\p_\tau H(x', f(x'))\sqrt{1+|\p_xf(x')|^2}dx'\right|\\
&\quad\le \Big(\int_{-\pi}^x|\p_\tau H(x', f(x'))|^2\sqrt{1+|\p_xf(x')|^2}dx'\Big)^{\frac12}\Big(\int_{-\pi}^x \sqrt{1+|\p_xf(x')|^2}dx'\Big)^{\frac12}\\
&\quad\le \sqrt{2\pi \| N\|_{L^\infty(\T)}} \Big(\int_{\partial\Omega}|\p_\tau H|^2d\sigma\Big)^{\frac12}.
\end{aligned}
\]
It follows from this and Cauchy-Schwarz's inequality for \eqref{intparts} that
\begin{equation}\label{Poincare}
\int_{\Omega}|\nabla H|^2dX\leq 2\pi\|N\|_{L^\infty(\T)}\Big(\int_{\partial\Omega}|\p_\tau H|^2d\sigma\Big)^{\frac12}\Big(\int_{\partial\Omega}|\p_n H|^2d\sigma\Big)^{\frac12}.
\end{equation}
Plugging above inequality into \eqref{yacasi} and using Young's inequality we get
\begin{equation}\label{dnbdt}
\int_{\partial\Omega}|\p_n H|^2d\sigma\leq 2(12C\pi)^2\|N\|^2_{L^\infty(\T)}\int_{\partial\Omega}|\p_\tau H|^2d\sigma.
\end{equation}
Next, we employ the Rellich identity \eqref{Rellich} for $V$ and $H$ in $\Omega^c=\Rr^2\setminus\overline{\Omega}$ to get
\begin{align*}
\frac12\int_{\partial\Omega}|\nabla H|^2d\sigma&\leq \int_{\partial\Omega}2\p^c_n H (V\cdot \nabla H)d\sigma+\int_{\Omega^c}(\nabla\cdot V) |\nabla H|^2dX-\int_{\Omega^c}2(\nabla V \nabla H)\cdot\nabla H dX\\
&\leq 2\Big(\int_{\partial\Omega}|\p_n^c H|^2d\sigma\Big)^{\frac12}\Big(\int_{\partial\Omega}|\nabla H|^2d\sigma\Big)^{\frac12}+3C\int_{\Omega^c}|\nabla H|^2dX.
\end{align*}
As before, using Young's inequality and  \eqref{Poincare} for $\Omega^c$ we obtain
\begin{align*}
\frac14\int_{\partial\Omega}|\nabla H|^2d\sigma&\leq 4\int_{\partial\Omega}|\p_n^c H|^2d\sigma+6C\pi\|N\|_{L^\infty(\T)}\Big(\int_{\partial\Omega}|\p_\tau H|^2d\sigma\Big)^{\frac12}\Big(\int_{\partial\Omega}|\p^c_n H|^2d\sigma\Big)^{\frac12}.
\end{align*}
The obvious  bound $|\p_\tau H|^2\leq |\nabla H|^2$ combined with Young's inequality yields
\begin{align}\label{dtadnOc}
\int_{\partial\Omega}|\p_\tau H|^2d\sigma&\leq 2(12C\pi)^2\|N\|^2_{L^\infty(\T)}\int_{\partial\Omega}|\p^c_n H|^2d\sigma.
\end{align}
The above argument when applied to $\Omega$ gives
\begin{align}\label{dtadn}
\int_{\partial\Omega}|\p_\tau H|^2d\sigma&\leq 2(12C\pi)^2\|N\|^2_{L^\infty(\T)}\int_{\partial\Omega}|\p_n H|^2d\sigma.
\end{align}
Combining  \eqref{dnbdt} and \eqref{dtadnOc} allows us to relate both normal derivatives due to the continuity of the tangential derivative:
\begin{align*}
\int_{\partial\Omega}|\p_n H|^2d\sigma&\leq 4(12C\pi)^4\|N\|^4_{L^\infty(\T)}\int_{\partial\Omega}|\p^c_n H|^2d\sigma,\quad C=O(\delta^{-1}).
\end{align*}
Thus we  obtain  \eqref{cpcomp}  with $M=2(12C\pi)^2(1+\| f\|_{\Lip(\T)})^{5/2}$. The opposite inequality
$$
\|(\frac12I+K^*[f])(\eta)\|_{L_0^2}\leq M\|(\frac12I-K^*[f])(\eta)\|_{L_0^2}
$$
follows by the same argument, completing the proof.
\end{proof}
\begin{rema}
The invertibility of $(\mez I-K^*)$ on $L^2_0$ was proved in  \cite{Verchota1984} using the inequalities
\begin{align}\label{est:Ver}
&\| (\mez I\pm K^*)(\eta)\|_{L^2(\T)}\le C\| (\mez I\mp K^*)(\eta)\|_{L^2(\T)} +C\Big|\int_\T S[f](\eta) dx\Big|.
\end{align}
The inequalities in \eqref{comparable} dispense with the second term on the right-hand of \eqref{est:Ver} and hence allowed us to obtain the quantitative bound \eqref{boundDLP}.
\end{rema}
As an interesting corollary of  the proof of Proposition \ref{prop:potential}, we deduce a quantitative bound for the tangential derivative of the single layer potential in terms of the double layer potential. This in turn yields the continuity of $G(g):\dot H^1\to L^2$ assuming only that the boundary is Lipschitz.
\begin{coro}\label{coro:potential}
Let $f\in \Lip(\T)$. There exists an absolute constant $C>0$ such that for all $\eta\in L^2_0(\T)$, we have
\bq\label{bound:SLP}
\| \na S[f](\eta)\cdot(1, \p_xf)\|_{L^2(\T)}
\le C(1+\| f\|_{\Lip(\T)})^2\| (\frac12I-K^*[f])(\eta)\|_{L^2}.
\eq
Consequently,
\bq\label{velocitybound}
\| G(f)g\|_{L^2(\T)}\le C(1+\| f\|_{\Lip(\T)})^2\| \p_xg\|_{L^2(\T)}
\eq
for all $g\in  \dot H^1(\T)$.
\end{coro}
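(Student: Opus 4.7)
The plan is to read \eqref{bound:SLP} directly off the Rellich-type inequality \eqref{dtadn} already established in the proof of Proposition \ref{prop:potential}, and to deduce \eqref{velocitybound} by substituting the layer-potential representation \eqref{DN:S} of $G(f)g$. No new potential-theoretic identity has to be proved; the work is essentially one of extraction and bookkeeping.

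For \eqref{bound:SLP}, set $H=S[f](\eta)$. Two elementary conversions between arc-length integrals on $\Sigma$ and integrals on $\T$ are required, using $d\sigma=|N(x)|\,dx$ with $N=(-\p_xf,1)$. On one hand the interior jump relation $\p_N H=-(\mez I-K^*[f])\eta$ from \eqref{ellipticH} bounds $\int_{\p\Omega}|\p_n H|^2 d\sigma$ by $\|N\|_{L^\infty}\|(\mez I-K^*[f])\eta\|_{L^2(\T)}^2$. On the other hand the unit tangent to $\Sigma$ is $T/|T|$ with $T=(1,\p_xf)$ and $|T|=|N|$, so $|T\cdot\na H|^2=|N|^2|\p_\tau H|^2$, giving $\|T\cdot\na H\|_{L^2(\T)}^2\le\|N\|_{L^\infty}\int_{\p\Omega}|\p_\tau H|^2 d\sigma$. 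Sandwiching \eqref{dtadn} between these two bounds yields $\|T\cdot\na H\|_{L^2(\T)}^2\le C\|N\|_{L^\infty}^4\|(\mez I-K^*[f])\eta\|_{L^2(\T)}^2$, and the elementary inequality $\|N\|_{L^\infty}\le 1+\|f\|_{\Lip(\T)}$ produces \eqref{bound:SLP} after taking a square root.

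The estimate \eqref{velocitybound} then follows with essentially no additional work. Given $g\in\dot H^1(\T)$ the function $\p_x g$ has zero mean, so $\tt:=(\mez I-K^*[f])^{-1}\p_x g\in L^2_0(\T)$ is well defined thanks only to the qualitative solvability \eqref{invert:3}; it is worth emphasizing that the quantitative inverse bound of Proposition \ref{prop:potential} plays \emph{no} role here. The representation \eqref{DN:S} reads $G(f)g=T\cdot\na S[f](\tt)$ a.e., and applying \eqref{bound:SLP} with $\eta=\tt$ gives $\|G(f)g\|_{L^2}\le C(1+\|f\|_{\Lip})^2\|(\mez I-K^*[f])\tt\|_{L^2}=C(1+\|f\|_{\Lip})^2\|\p_x g\|_{L^2}$, which is \eqref{velocitybound}.

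The main point to watch, and the only plausible obstacle, is careful tracking of constants. One has to verify that the prefactor in \eqref{dtadn} is genuinely absolute rather than implicitly $f$-dependent, which is the case because the cutoff field $V$ in \eqref{defV} can be chosen with $L^\infty$ and gradient norms independent of $f$. One must then check that the four factors of $\|N\|_{L^\infty}$ collected in the derivation above (two from the arc-length conversions, two from \eqref{dtadn} itself) collapse after square-rooting to exactly the $(1+\|f\|_{\Lip})^2$ factor asserted in \eqref{bound:SLP}, and not to the larger $(1+\|f\|_{\Lip})^{5/2}$ appearing in Proposition \ref{prop:potential}; this is precisely what occurs because the inverse-map argument in \eqref{cpcomp} requires one \emph{additional} arc-length conversion that is absent here.
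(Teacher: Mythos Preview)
Your proposal is correct and follows essentially the same approach as the paper: both extract \eqref{bound:SLP} directly from the Rellich inequality \eqref{dtadn} via arc-length/$\T$ conversions and the jump relation in \eqref{ellipticH}, then plug the layer-potential representation \eqref{DN:S} to get \eqref{velocitybound}. One minor remark: the paper first reduces \eqref{velocitybound} to $g\in H^1(\T)$ by density, since the representation \eqref{DN:S} was established under that hypothesis in Proposition~\ref{prop:DNcontour}; you should insert that one-line reduction, but otherwise your bookkeeping (and in particular your observation that the quantitative inverse bound of Proposition~\ref{prop:potential} is \emph{not} needed here) is on target.
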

 \begin{proof}
Denote  $H(x, y)=S[f](\eta)(x, y)$  the single lay potential as given by \eqref{SLP}. Now \eqref{dtadn} allows us to bound $\p_\tau H$ by $\p_n H$ as
\begin{align*}
\| \na S[f](\eta)\cdot(1, \p_xf)\|_{L^2(\T)}&\le (1+\| f\|_{\Lip(\T)})\| \p_\tau H\|_{L^2(\T)}\\
&\le C(1+\| f\|_{\Lip(\T)})^2\| \p_n H\|_{L^2(\T)},
\end{align*}
where we recall that $\tau$ and $n$ are respectively the unit tangent and  normal to the surface $\{y=f(x)\}$. On the other hand, it follows from the second equation in \eqref{ellipticH}  that
\[
 \| \p_n H\|_{L^2(\T)}\le  \| \p_N H\|_{L^2(\T)}=\| (-\frac12I+K^*[f])(\eta)\|_{L^2},
 \]
 from which  \eqref{bound:SLP} follows.

 By the density of $\dot H^1(\T)$ in $H^1(\T)$, it suffices to prove  \eqref{velocitybound} for $g\in H^1(\T)$. Set $\tt=(\mez I-K^*[f])^{-1}(\p_xg)$. Recall from \eqref{DN:S}  that $G(f)g$  is a tangential derivative of $S[f]\tt$:
\[
(G(f)g)(x)=\nabla S[f](\theta)(x,f(x))\cdot (1,\p_x f(x)).
\]
Thus \eqref{velocitybound} follows at once from \eqref{bound:SLP} and the fact that $(\mez I-K^*[f])\tt=\p_xg$.
 \end{proof}
 \begin{rema}
 The inequality \eqref{velocitybound} upgrades the continuity \eqref{cont:DN:low} by half a derivative without  any additional regularity assumption on the surface $f$. By interpolating \eqref{cont:DN:low} and \eqref{velocitybound}, we obtain
 \bq
 \| G(f)g\|_{H^{\sigma-1}(\T)}\le C(1+\| f\|_{\Lip(\T)})^2\|g\|_{\dot H^\sigma(\T)}
 \eq
 for all $\sigma \in [\mez, 1]$. For the same range of $\sigma$, this is an improvement over \eqref{est:DN} which requires in addition that $f\in H^{s_0}(\T)$ for some $s_0>\tdm$.
 \end{rema}
\begin{rema}
Proposition \ref{prop:potential} and Corollary \ref{coro:potential} are valid for general bounded domains of $\Rr^n$,~$n\ge 2$ with Lipschitz boundary. 
 \end{rema}
\section{Viscosity regularization}
We consider the viscosity regularization of the Muskat problem:
\bq\label{Muskat:r}
\p_tf^\eps=-\ka G(f^\eps)f^\eps+\eps \p_x^2 f^\eps,\quad \eps\ge 0.
\eq
\subsection{Comparison principle  for smooth solutions}
\begin{prop}[{\bf Comparison principle}]\label{comparison:Muskatr}
Assume that $f_j\in C([0, T]; C^2(\T))$, $j=1, 2$ are smooth solutions of \eqref{Muskat:r} with $\eps\ge 0$. If $f_1(\cdot, 0)\le f_2(\cdot, 0)$ then
\bq
f_1(x, t)\le f_2(x, t)\quad\forall (x, t)\in \T\times [0, T].
\eq
\end{prop}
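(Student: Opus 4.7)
The plan is to argue by contradiction using a linear-in-time perturbation of $f_1$, reducing the global inequality to the pointwise comparison principle for $G$ (Proposition \ref{prop:comparisonDN}). The first ingredient I would use is the vertical translation invariance of the Dirichlet--Neumann operator: for any constant $c$, $G(f+c)(f+c) = G(f) f$. This holds because $\Omega_{f+c}$ is just the vertical translate of $\Omega_f$, and the harmonic extension of $f+c$ into $\Omega_{f+c}$ is $\phi(x,y-c)+c$ where $\phi$ is the harmonic extension of $f$ into $\Omega_f$; the normal vector and hence the normal derivative are unchanged. Consequently, if $f_1$ solves \eqref{Muskat:r}, then $\tilde f_1(x,t) := f_1(x,t) - \delta t$ satisfies
\[
\partial_t \tilde f_1 = -\kappa\, G(\tilde f_1)\tilde f_1 + \epsilon\, \partial_x^2 \tilde f_1 - \delta
\]
for any $\delta>0$, while $\partial_x^2\tilde f_1=\partial_x^2 f_1$.

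Next, fix $\delta>0$ and suppose, for contradiction, that $\tilde f_1 - f_2$ takes a strictly positive value somewhere on $\T\times[0,T]$. Since $\tilde f_1(\cdot,0)=f_1(\cdot,0)\le f_2(\cdot,0)$ and both functions are continuous on the compact set $\T\times[0,T]$, there exists a first time $t_0\in(0,T]$ and a point $x_0\in\T$ at which $(\tilde f_1-f_2)(x_0,t_0)=0$ is attained as a global maximum, and $\tilde f_1(\cdot,t_0)\le f_2(\cdot,t_0)$ on $\T$. Standard first-touch calculus (using that $f_1, f_2\in C^2$) gives
\[
\partial_t(\tilde f_1 - f_2)(x_0,t_0)\ge 0,\qquad \partial_x(\tilde f_1-f_2)(x_0,t_0)=0,\qquad \partial_x^2(\tilde f_1 - f_2)(x_0,t_0)\le 0.
\]

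Now I would apply Proposition \ref{prop:comparisonDN} at $(x_0,t_0)$: since $\tilde f_1(\cdot,t_0)\le f_2(\cdot,t_0)$ with equality at $x_0$ and both are $C^{1,1}$ (indeed $C^2$) at $x_0$, we obtain $\big(G(\tilde f_1)\tilde f_1\big)(x_0)\ge \big(G(f_2) f_2\big)(x_0)$. Subtracting the perturbed equation for $\tilde f_1$ from the equation for $f_2$ at $(x_0,t_0)$ yields
\[
\partial_t(\tilde f_1-f_2)(x_0,t_0) = -\kappa\big[\big(G(\tilde f_1)\tilde f_1\big)(x_0)-\big(G(f_2) f_2\big)(x_0)\big] + \epsilon\,\partial_x^2(\tilde f_1 - f_2)(x_0,t_0) -\delta,
\]
in which $\kappa>0$ makes the first bracket's contribution nonpositive, the second term is nonpositive by the maximum condition, and the last term is $-\delta<0$. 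Hence $\partial_t(\tilde f_1-f_2)(x_0,t_0)\le -\delta<0$, contradicting $\partial_t(\tilde f_1-f_2)(x_0,t_0)\ge 0$.

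Therefore $f_1-\delta t\le f_2$ on $\T\times[0,T]$ for every $\delta>0$, and sending $\delta\to 0^+$ finishes the proof. The principal obstacle is exactly what Proposition \ref{prop:comparisonDN} resolves: because $G(f)f$ is nonlocal in $f$, one cannot dismiss it by inspecting only the local jet of $\tilde f_1-f_2$ at $(x_0,t_0)$, and a naive maximum-principle argument based solely on the $\epsilon\partial_x^2$ term would degenerate as $\epsilon\to 0$. The translation invariance of $G$ is what enables the key trick of perturbing $f_1$ by $-\delta t$ without disturbing the nonlocal $G$ term, thereby producing the strict sign needed to close the contradiction.
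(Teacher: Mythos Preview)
Your proof is correct and follows the same approach as the paper: a linear-in-time perturbation combined with the pointwise comparison for the Dirichlet--Neumann operator (Proposition~\ref{prop:comparisonDN}) and its vertical translation invariance. The only cosmetic difference is that the paper adds $\eta t + M_*$ to $f_2$ and argues at the global maximum of $f_1 - f_2 - \eta t$ (automatically attained at positive time since the difference is $\le 0$ at $t=0$ and $>0$ somewhere), whereas you subtract $\delta t$ from $f_1$ and invoke a first-touching time; your assertion that $t_0\in(0,T]$ tacitly assumes the initial data do not touch, but this is easily repaired by arguing instead at the strictly positive global maximum of $\tilde f_1 - f_2$ over $\T\times[0,T]$.
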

\begin{proof}
Assume by contradiction that $M_0=\max_{\T\times [0, T]}(f_1-f_2)=(f_1-f_2)(x_0, t_0)>0$. We have $t_0>0$ since $f_1(\cdot, 0)\le f_2(\cdot, 0)$. Choose $\eta>0$ sufficiently small so that $(f_1-f_2-\eta t)(x_0, t_0)=M_0-\eta t_0>0$, and thus $M_*=\max_{\T\times [0, T]}(f_1-f_2-\eta t)>0$. Moreover, $M_*$ is attained at some point $(x_*, t_*)$ with $t_*>0$ because $(f_1-f_2-\eta t)\vert_{t=0}\le 0$. Then
\[
\max_{\T\times [0, T]}(f_1-f_2-\eta t-M_*)=0
\]
and is attained at $(x_*, t_*)$. Applying Proposition \ref{prop:comparisonDN} gives
 \[
\big(G(f_1)f_1\big)(x_*, t_*)\ge \big(G(f_2+\eta t+M_*)(f_2+\eta t+M_*)\big)(x_*, t_*).
 \]
In addition, at the maximum point $(x_*, t_*)$ with  $t_*>0$ we have
\[
\p_t f_1(x_*, t_*)\ge \p_t(f_2+\eta t+M_*)(x_*, t_*), \quad \p_x^2f_1(x_*, t_*)\le \p_x^2(f_2+\eta t+M_*)(x_*, t_*).
\]
 It follows that
\begin{align*}
\p_t(f_2+\eta t+M_*)\le \p_t f_1&=  -\ka G(f_1)f_1+\eps \p_x^2f_1\\
&\le  -\ka G(f_2+\eta t+M_*)(f_2+\eta t+M_*)+\eps\p_x^2f_2\\
&= -\ka G(f_2)(f_2)+\eps\p_x^2f_2= \p_t f_2
\end{align*}
at $(x_*, t_*)$. But this implies $\eta\le 0$ which is contradictory.
\end{proof}
Owing to the translation invariance of \eqref{Muskat:r},  comparison principle implies maximum principles for the amplitude and the slope of solutions.
\begin{coro}[{\bf Maximum principles}]\label{prop:maxslop}
Let $f\in C([0, T]; C^2(\T))$ be a  smooth solution of \eqref{Muskat:r} with $\eps\ge 0$. If  $f(\cdot, 0)$ has the modulus of continuity $\gamma:\Rr^+\to \Rr^+$ then so does $f(\cdot, t)$ for all $t\in [0, T]$. That is,
\bq\label{max:moduli}
|f(x, t)-f(y, t)|\le \gamma(|x-y|)\quad\forall (x, t)\in \T\times [0, T].
\eq
In particular, for $\gamma(z)=z\| f^\eps(\cdot, 0)\|_{\Lip(\T)}$ we have
\bq\label{max:slope}
 \| f(\cdot, t)\|_{\Lip(\T)}\le \| f(\cdot, 0)\|_{\Lip(\T)}\quad\forall t\in [0, T].
\eq
If $f_1$ and $f_2$ are smooth solutions in $C([0, T]; C^2(\T))$ of \eqref{Muskat:r} with $\eps\ge 0$, then
\bq
\label{max:amp}
\| f_1(\cdot, t)-f_2(\cdot, t)\|_{L^\infty(\T)}\le \| f_1(\cdot, 0)-f_2(\cdot, 0)\|_{L^\infty(\T)}\quad\forall t\in [0, T].
\eq
\end{coro}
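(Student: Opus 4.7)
The plan is to deduce these maximum principles from the comparison principle (Proposition \ref{comparison:Muskatr}) together with the translation invariance of the regularized Muskat equation \eqref{Muskat:r} in both the horizontal and the vertical directions.

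First I would verify that if $f$ solves \eqref{Muskat:r}, then so do $f(x+h,t)$ and $f(x,t)+c$ for arbitrary constants $h,c\in\Rr$. Horizontal invariance is immediate from the form of \eqref{reform:Gfg} and the $x$-invariance of $\p_x^2$. For vertical invariance, the domain $\Omega_{f+c}$ is the vertical shift of $\Omega_f$ by $c$, and if $\phi$ is the harmonic extension of $f$ to $\Omega_f$ given by Proposition \ref{prop:elliptic}, then $(x,y)\mapsto \phi(x,y-c)+c$ is the harmonic extension of $f+c$ to $\Omega_{f+c}$ by the uniqueness in Proposition \ref{prop:elliptic}. Since $N_{f+c}=N_f$ and $\na\phi$ is unchanged by the vertical shift, $G(f+c)(f+c)(x)=G(f)f(x)$, while $\p_x^2(f+c)=\p_x^2 f$.

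Next, to prove the modulus of continuity estimate \eqref{max:moduli}, I would fix $h\in\Rr$ and define $\wt f(x,t)=f(x+h,t)+\gamma(|h|)$, which is a smooth solution of \eqref{Muskat:r} by the preceding step. The initial bound $|f(x+h,0)-f(x,0)|\le\gamma(|h|)$ gives $\wt f(\cdot,0)\ge f(\cdot,0)$, and Proposition \ref{comparison:Muskatr} yields $f(x,t)-f(x+h,t)\le\gamma(|h|)$ on $\T\times[0,T]$. Running the same argument with $h$ replaced by $-h$ and then shifting $x\mapsto x+h$ gives the opposite inequality $f(x+h,t)-f(x,t)\le\gamma(|h|)$. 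The Lipschitz bound \eqref{max:slope} is the special case $\gamma(z)=z\|f(\cdot,0)\|_{\Lip(\T)}$.

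Finally, for the $L^\infty$ stability \eqref{max:amp}, I would set $c=\|f_1(\cdot,0)-f_2(\cdot,0)\|_{L^\infty(\T)}$ and apply Proposition \ref{comparison:Muskatr} to the pairs $(f_1,f_2+c)$ and $(f_2-c,f_1)$; by vertical invariance both $f_2\pm c$ are solutions, and the initial ordering $f_2(\cdot,0)-c\le f_1(\cdot,0)\le f_2(\cdot,0)+c$ propagates to all $t\in[0,T]$. Each piece of the argument is a direct application of the comparison principle after a trivial translation, so the only step requiring real care is the verification that $G(f+c)(f+c)=G(f)f$ under vertical translation; but this follows immediately from the uniqueness in Proposition \ref{prop:elliptic}, so there is no substantive obstacle.
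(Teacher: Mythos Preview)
Your proposal is correct and follows essentially the same approach as the paper: both deduce the maximum principles by combining the comparison principle of Proposition \ref{comparison:Muskatr} with the horizontal and vertical translation invariance of \eqref{Muskat:r}. Your explicit verification that $G(f+c)(f+c)=G(f)f$ is a welcome detail that the paper simply asserts, and your derivation of the two-sided bound in \eqref{max:moduli} by replacing $h$ with $-h$ is equivalent to the paper's observation that the one-sided inequality $f(x,t)\le f(x+y,t)+\gamma(|y|)$, valid for all $x,y$, already encodes both directions.
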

\begin{proof}
The modulus of continuity $\gamma$ of $f(\cdot, 0)$ is equivalent to $f(x+y, 0)\le f(x, 0)+\gamma(|y|)$ for all $x,\, y\in \T$. For every fixed $y$, the function $f(x+y, t)+\gamma(|y|)$ is a smooth solution of \eqref{Muskat:r} with initial data $f(x+y, 0)+\gamma(|y|)$. The comparison principle in Proposition \ref{comparison:Muskatr} implies that
$f(x, t)\le f(x+y, t)+\gamma(|y|)$ for all $x,\, y\in \T$ and $t\in [0, T]$. Therefore, $|f(x, t)-f(x+y, t)|\le \gamma(|y|)$, proving \eqref{max:moduli}.

We turn to prove \eqref{max:amp}. We have  $f_1(\cdot, 0)\le f_2(\cdot, 0)+\| f_1(\cdot, 0)-f_2(\cdot, 0)\|_{L^\infty(\T)}$ and $f_2(\cdot, t)+\| f_1(\cdot, 0)-f_2(\cdot, 0)\|_{L^\infty(\T)}$ is a smooth solution of \eqref{Muskat:r}. By the comparison principle in Proposition \ref{comparison:Muskatr},
\[
f_1(x, t)\le f_2(x, t)+\| f_1(\cdot, 0)-f_2(\cdot, 0)\|_{L^\infty(\T)}\quad \forall (x, t)\in \T\times [0, T].
\]
Thus we can interchange the role of $f_1$ and $f_2$ to obtain \eqref{max:amp}.
\end{proof}
\subsection{Global well-posedness of smooth solutions}
By virtue of Proposition \ref{prop:DNcontour}, in the class of Lipschitz solutions, \eqref{Muskat:r} is equivalent to the system
\begin{align}
\begin{split}\label{Muskat:rGE}
\p_tf^\eps(x)&=\frac{1}{4\pi}p.v.\int_{-\pi}^{\pi}\frac{\sin x'+\sinh(f^\eps(x)-f^\eps(x-x'))\p_xf^\eps(x)}{\cosh(f^\eps(x)-f^\eps(x-x'))-\cos x'}\theta^\eps(x-x')dx'  +\eps \p_x^2 f^\eps(x),\\
\frac12\theta^\eps(x)&-\frac{1}{4\pi}\int_{-\pi}^\pi
\frac{\sinh(f^\eps(x)-f^\eps(x-x'))-\sin x'\p_x f^\eps(x)}{\cosh(f^\eps(x)-f^\eps(x-x'))-\cos x'}\theta^\eps(x-x')dx'=-\kappa \p_x f^\eps(x).
\end{split}
\end{align}
We prove that  \eqref{Muskat:rGE} is globally well-posed for any smooth initial data in Sobolev spaces.
\begin{prop}\label{GlobalEpsilonProblem}
Let $\eps>0$ and $s\ge 2$. For each $f^\eps_0\in H^s(\T)$,  there exists a unique solution $f^\eps\in C([0,T];H^s(\T))\cap L^2([0, T]; H^{s+1}(\T))$ a solution to \eqref{Muskat:r} for any time $T>0$ with initial data $f^\eps_0$. In particular, $f^\eps$ obeys the maximum principles
\bq\label{max:C1}
 \| f^\eps(t)\|_{L^\infty(\T)}\le \| f^\eps_0\|_{L^\infty(\T)},\quad  \| f^\eps(t)\|_{\Lip(\T)}\le \| f^\eps_0\|_{\Lip(\T)}\quad\forall t\ge 0.
\eq
\end{prop}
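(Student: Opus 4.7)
The plan is to recast \eqref{Muskat:r} in Duhamel form
$$f^\eps(t) = e^{\eps t \partial_x^2} f_0^\eps - \kappa \int_0^t e^{\eps(t-\tau)\partial_x^2} G(f^\eps(\tau))f^\eps(\tau)\,d\tau,$$
and construct a unique local solution by a contraction-mapping argument in $C([0,T_*];H^s(\T))$, using the smoothing of the heat semigroup on $\T$ together with the tame bound of Proposition~\ref{prop:estDN} (applied with $d=1$, $s_0\in(3/2,2)$, $\sigma=s\ge 2$). Parabolic smoothing then upgrades $f^\eps$ to $C^\infty(\T\times(0,T_*])$; in particular $f^\eps \in C([\delta,T_*];C^2(\T))$ for every $\delta>0$.

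\textbf{Uniform Lipschitz bound.} For each $\delta>0$, Corollary~\ref{prop:maxslop} applied on $[\delta,T_*]$ yields
$$\|f^\eps(t)\|_{L^\infty(\T)}\le \|f^\eps(\delta)\|_{L^\infty(\T)},\qquad \|f^\eps(t)\|_{\Lip(\T)}\le \|f^\eps(\delta)\|_{\Lip(\T)}.$$
Passing to the limit $\delta\to 0^+$ and invoking continuity in $H^s\hookrightarrow W^{1,\infty}(\T)$ at $t=0$ delivers \eqref{max:C1} on the maximal existence interval $[0,T^*)$.

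\textbf{Global Sobolev estimate.} A standard continuation principle reduces global existence to an a priori bound on $\|f^\eps(t)\|_{H^s}$ over arbitrary finite intervals. At level one, Corollary~\ref{coro:potential}\eqref{velocitybound} gives $\|G(f^\eps)f^\eps\|_{L^2}\le C(1+\|f_0^\eps\|_{\Lip})^2\|\partial_x f^\eps\|_{L^2}$, so testing the equation against $-\partial_x^2 f^\eps$, using Young's inequality and Gronwall, produces
$$f^\eps \in L^\infty([0,T];H^1(\T))\cap L^2([0,T];H^2(\T))$$
with constants depending only on $\eps,\kappa,T$ and $\|f_0^\eps\|_{H^1\cap W^{1,\infty}}$. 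Interpolating between these norms yields $f^\eps\in L^p_tH^{s_0}$ for some $p>2$ and $s_0>3/2$. For general $s\ge 2$, applying $\partial_x^s$ to \eqref{Muskat:r}, integrating by parts once, and invoking Proposition~\ref{prop:estDN},
$$\tfrac{1}{2}\tfrac{d}{dt}\|\partial_x^s f^\eps\|_{L^2}^2+\tfrac{\eps}{2}\|\partial_x^{s+1}f^\eps\|_{L^2}^2 \le \tfrac{\kappa^2}{2\eps}\cF(\|f^\eps\|_{H^{s_0}})^2\|f^\eps\|_{H^s}^2.$$
Gronwall, applied with the integrable bound on $\cF(\|f^\eps\|_{H^{s_0}})^2$ inherited from the previous step (bootstrapping the regularity level if needed), closes the estimate on $[0,T]$, so $T^*=\infty$ and the proof is complete.

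\textbf{Main obstacle.} The heart of the argument is translating the only dynamically robust bound supplied by the comparison principle—the Lipschitz bound \eqref{max:C1}—into a controllable constant in the tame estimate for $G(f^\eps)f^\eps$, which a priori requires $H^{s_0}$ information with $s_0>3/2$ that the Lipschitz norm does not provide. The bridge is Corollary~\ref{coro:potential}, whose $L^2$ bound on $G(f)g$ depends only on $\|f\|_{\Lip}$; this allows the $H^1$ energy estimate to close without any circular dependence on higher Sobolev norms, after which parabolic dissipation and Sobolev interpolation unlock the $H^{s_0}$ control needed to activate Proposition~\ref{prop:estDN} at the top level.
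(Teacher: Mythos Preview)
Your outline matches the paper's strategy through the $\dot H^1$ step: local existence via Duhamel plus the tame estimate, the maximum principles from Corollary~\ref{prop:maxslop}, and the $H^1$ energy bound closed by Corollary~\ref{coro:potential}. The gap is in the passage from $L^\infty_t H^1 \cap L^2_t H^2$ to the top-order $H^s$ estimate.

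You write that interpolation gives $f^\eps\in L^p_tH^{s_0}$ for some $p>2$, $s_0>3/2$, and that Gr\"onwall then closes because $\cF(\|f^\eps\|_{H^{s_0}})^2$ is integrable in $t$. But Proposition~\ref{prop:estDN} only asserts that $\cF$ is nondecreasing; it is not stated to be polynomial, and you may not assume so. Knowing $\|f^\eps(t)\|_{H^{s_0}}\in L^p(0,T)$ for a finite $p$ does not force $\cF(\|f^\eps(t)\|_{H^{s_0}})^2\in L^1(0,T)$, so the Gr\"onwall step is unjustified. The parenthetical ``bootstrapping the regularity level if needed'' does not resolve this: any bootstrap that invokes Proposition~\ref{prop:estDN} reintroduces the same uncontrolled $\cF$ factor, and the very first rung (getting $L^\infty_t H^{s_0}$ for some $s_0>3/2$) is exactly what is missing.

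The paper fills this gap with a dedicated $\dot H^2$ estimate that avoids Proposition~\ref{prop:estDN} altogether. Precisely, it proves (Lemma~\ref{lemm:keyH2}) that for every $\nu\in(0,1)$,
\[
\|\partial_x[G(f^\eps)f^\eps]\|_{L^2}\le \nu\|\partial_x^3 f^\eps\|_{L^2}+\nu^{-1}\cF(\|f^\eps\|_{W^{1,\infty}})\big(\|\partial_x^2 f^\eps\|_{L^2}+\|\partial_x^2 f^\eps\|_{L^2}^2\big),
\]
where the argument of $\cF$ is only the $W^{1,\infty}$ norm, which is controlled for all time by \eqref{max:C1}. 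This is obtained by differentiating the singular-integral representation \eqref{DN:contour12}--\eqref{tt:dg}, splitting each resulting term into a small-scale piece (size $\delta$, absorbed into the dissipation after choosing $\delta$ in terms of $\nu$) and a large-scale piece (bounded by $\|\partial_x^2 f^\eps\|_{L^2}$ and the $L^2$ bound on $\theta^\eps$ from Proposition~\ref{prop:potential}). The resulting a~priori bound puts $f^\eps$ in $L^\infty_t H^2$, hence $L^\infty_t H^{s_0}$, and \emph{only then} can Proposition~\ref{prop:estDN} be safely applied with $\cF(\|f^\eps\|_{L^\infty_t H^{s_0}})$ a fixed constant to propagate $H^s$ for $s>2$. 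This intermediate $H^2$ lemma is the main technical content of the proof and cannot be bypassed by interpolation.
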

\begin{proof}
We first note that the dissipative term $\eps \p_x^2f^\eps$ makes the regularized Muskat problem \eqref{Muskat:r} semilinear. Then by the contraction mapping method, it was proved  in Section 4.1.5 \cite{NguyenPausader2019} that for any $s>\tdm$, \eqref{Muskat:r} has a unique solution $f^\eps\in C([0, T_\eps); H^s)\cap L^2_{\text{loc}}([0, T_\eps); H^{s+1})$  where $T_\eps$ is the maximal time. Moreover, if $T_\eps<\infty$ then
 \[
 \lim\sup_{t\to T^\eps}\| f^\eps(\cdot, t)\|_{H^s}=\infty.
 \]
 Therefore, to conclude that $f^\eps$ is global in time, it suffices to prove that the $H^s$ norm of $f^\eps$ is bounded on $[0, T_\eps)$. To this end, we shall prove that for any $T<T_\eps$, the $C([0, T]; H^s)$ norm of $f^\eps$ can be controlled  by $\| f^\eps_0\|_{H^s}$ and the $L^\infty([0, T]; W^{1, \infty})$ norm of $f^\eps$, where the latter is uniformly bounded in time owing to the maximum principles \eqref{max:slope} and  \eqref{max:amp}.

For notational simplicity we shall write $f^{\eps}=f$. In what follows, $\cF:\Rr^+\to \Rr^+$ denotes  continuous nondecreasing functions that may change from line to line but only depend on $\ka$.

{\bf $L^2$ estimate}. Using $(G(f)f, f)\ge 0$ and $(\p_x^2f, f)\le 0$, we find at once that
\bq\label{max:L2}
\|f(t)\|_{L^2}\le \| f_0\|_{L^2}\quad\forall t>0.
\eq
{\bf $\dot H^1$ estimate}. Multiplying  \eqref{Muskat:r} by $-\p_x^2f$, then integrating by parts and using the estimate \eqref{velocitybound}, we obtain
$$
\begin{aligned}
\frac12\frac{d}{dt}\|\p_xf\|_{L^2}^2&\leq \ka\|\p_x^2f\|_{L^2}\|G(f)f\|_{L^2}-\eps\|\p_x^2f\|_{L^2}^2\\
&\leq \cF(\|f\|_{W^{1,\infty}})\| \p_x^2f\|_{L^2}\| \p_xf\|_{L^2}-\eps\|\p_x^2f\|_{L^2}^2\\
&\le \eps^{-1}\cF(\|f\|_{W^{1,\infty}})\|\p_xf\|_{L^2}^2-\frac{\eps}{2}\|\p_x^2f\|_{L^2}^2.
\end{aligned}
$$
By a Gr\"onwall argument we find that
\bq\label{H1ev}
\|\p_xf\|_{L^2}^2(t)+\eps\int_0^t\|\p_x^2f\|_{L^2}^2(t')dt'\leq \|\p_xf\|_{L^2}^2(0)\exp\Big(\eps^{-1}\cF(\|f\|_{W^{1,\infty}})t\Big).
\eq
{\bf $\dot H^2$ estimate}. We differentiate \eqref{Muskat:r} in $x$, then multiply by $-\p_x^3f$ and integrate by parts to obtain
\[
 \frac12\frac{d}{dt}\|\p_x^2f\|_{L^2}^2=\ka (\p_x[G(f)f], \p_x^3f)_{L^2, L^2}-\eps\|\p_x^3f\|_{L^2}^2,
\]
where by Cauchy-Schwarz and Young's inequalities
\[
\ka (\p_xG(f)f, \p_x^3 f)_{L^2, L^2}\le \frac{\eps}{4}\| \p_x^3f\|_{L^2}^2+ \frac{\ka^2}{\eps} \| \p_x[G(f)f]\|_{L^2}^2.
\]
We claim that for any $\nu\in (0, 1)$,
\bq\label{key:H2est}
 \| \p_x[G(f)f]\|_{L^2}\le \nu \| \p_x^3f\|_{L^2}+\frac{1}{\nu}\cF(\| f\|_{W^{1, \infty}})(\|\p_x^2f\|_{L^2}+\| \p_x^2f\|_{L^2}^2).
\eq
Taking \eqref{key:H2est} for granted, we  choose  $\nu=\frac{\eps}{4\ka}$ so that the above estimates yield
\bq
 \frac12\frac{d}{dt}\|\p_x^2f\|_{L^2}^2\le \eps^{-3}\cF(\| f\|_{W^{1, \infty}})(1+\| \p_x^2f\|^2_{L^2})\|\p_x^2f\|_{L^2}^2-\frac{\eps}{2}\|\p_x^3f\|_{L^2}^2.
 \eq
 Then applying Gr\"onwall's lemma  and  \eqref{H1ev} to bound
 \[
 \int_0^t(1+\|\p_x^2f\|_{L^2}^2(t'))dt'\le t+\eps^{-1}\|\p_xf\|_{L^2}^2(0)\exp\Big(\eps^{-1}\cF(\|f\|_{W^{1,\infty}})t\Big),
 \]
we arrive at the $\dot H^2$ estimate
\begin{align}
\begin{split}\label{H2ev}
\|\p_x^2f\|_{L^2}^2(t)\!+\!\eps\!\int_0^t\!\!\|\p_x^3f\|_{L^2}^2(s)ds\le C(\| f_0\|_{H^2}, \| f\|_{W^{1, \infty}}, t, \eps)\end{split}.
\end{align}
The proof of \eqref{key:H2est} requires a careful decomposition of the singular integral representation \eqref{DN:contour12}-\eqref{tt:dg} of $G(f)f$ and is postponed to Lemma \ref{lemm:keyH2} below.

{\bf Higher regularity}. In this step we prove that $H^s$ regularity with $s>2$ can be propagated. To this end, we use  Sobolev estimates for the Dirichlet-Neumann operator with the control norm in $L^\infty_t H^{\tdm+}_x$ provided by \eqref{H2ev}. Indeed, fixing $s_0\in (\tdm, 2)$ and applying \eqref{est:DN} with $g=f$ and $\sigma=s$, we have
\bq
\| G(f)f\|_{H^{s-1}}\le \cF(\|f\|_{H^{s_0}})\| f\|_{H^s}.
\eq
Then an $H^s$ energy estimate for \eqref{Muskat:r} gives
\bq
\begin{aligned}
\mez\frac{d}{dt}\| f\|_{H^s}^2&\le \ka (G(f)f, f)_{H^{s-1}, H^{s+1}}-\eps \| \p_xf\|_{H^s}^2\\
& \le  \ka\cF(\|f(t)\|_{H^{s_0}})\| f(t)\|_{H^s}\| f(t)\|_{H^{s+1}}-\eps \| f(t)\|_{H^{s+1}}^2+\eps \|f(t)\|_{L^2}^2\\
&\le \frac{ \ka^2}{2\eps}\cF^2(\|f(t)\|_{H^{s_0}})\| f(t)\|^2_{H^s}-\frac{\eps}{2} \| f(t)\|_{H^{s+1}}^2+\eps \|f_0\|_{L^2}^2,
\end{aligned}
\eq
where in the last inequality we used Young's inequality and the maximum principle for the $L^2$ norm of $f$. By Gr\"onwall's lemma, we obtain
\bq
\| f(t)\|_{H^s}^2+\eps \| f\|_{L^2([0, t]; H^{s+1})}^2\le (1+\eps t)\| f_0\|_{H^s}^2\exp\Big(t\frac{\ka^2}{\eps}\cF^2\big(\|f\|_{L^\infty([0, t]; H^{s_0})}\big)\Big).
\eq
Then using the $\dot H^2$ estimate \eqref{H2ev} together with \eqref{max:L2}, \eqref{max:amp} and \eqref{max:slope}, we arrive at
\bq
\|  f\|_{L^\infty([0, T]; H^s)}\le C(\| f_0\|_{H^s}, T, \eps)\quad\forall T<T_\eps.
\eq
This concludes the proof of global regularity for \eqref{Muskat:r}.
\end{proof}
We now prove the claim \eqref{key:H2est}.
\begin{lemm}\label{lemm:keyH2}
There exists $\cF:\Rr^+\to \Rr^+$ nondecreasing such that for any $\nu \in (0, 1)$,
\bq
 \| \p_x[G(f^\eps)f^\eps]\|_{L^2}\le \nu \| \p_x^3f^\eps\|_{L^2(\T)}+\frac{1}{\nu}\cF(\| f^\eps\|_{W^{1, \infty}(\T)})(\|\p_x^2f^\eps\|_{L^2}+\| \p_x^2f^\eps\|_{L^2(\T)}^2).
\eq
\end{lemm}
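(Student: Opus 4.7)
The plan is to differentiate the contour-integral representation
\[
G(f)f(x) = \frac{1}{4\pi}\mathrm{p.v.}\!\int_{-\pi}^\pi \frac{\sin x' + \sinh(f(x)-f(x-x'))\p_x f(x)}{\cosh(f(x)-f(x-x'))-\cos x'}\theta(x-x')\,dx',
\]
where (suppressing the $\eps$) $\theta\in L^2_0(\T)$ solves $(\tfrac12 I-K^*[f])\theta=-\kappa\p_x f$, and then control each resulting piece by a mixture of layer-potential, Calder\'on-commutator and Gagliardo-Nirenberg arguments. Since all kernels appearing here behave like the Hilbert transform near $x'=0$ (both numerator and denominator vanish like $x'^2$ and $x'$ respectively, up to factors depending on $\|f\|_\Lip$), their smooth remainders are operators with $\cF(\|f\|_{W^{1,\infty}})$-bounded kernels and the singular parts reduce to Calder\'on commutators of order one with the Lipschitz function $f$, hence $L^2$-bounded.

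The key auxiliary input is a quantitative control of $\theta$ and $\p_x\theta$ in $L^2$. By Proposition \ref{prop:potential} (applied to $\p_x f\in L^2_0$),
\[
\|\theta\|_{L^2(\T)} \le C(1+\|f\|_{\Lip(\T)})^{5/2}\|\p_x f\|_{L^2(\T)}\le \cF(\|f\|_{W^{1,\infty}}).
\]
Differentiating the integral equation for $\theta$ in $x$ yields
\[
(\tfrac12 I-K^*[f])\p_x\theta = -\kappa\,\p_x^2 f + [\p_x,K^*[f]]\theta,
\]
and the commutator, in which $\p_x$ falls on the kernel and produces factors of $\p_x f(x)$ and $\p_x^2 f(x)$, can be bounded on $L^2$ by $\cF(\|f\|_{W^{1,\infty}})(\|\p_x^2 f\|_{L^\infty}\|\theta\|_{L^2}+\|\theta\|_{L^2})$ using Calder\'on commutator estimates. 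Combining with Proposition \ref{prop:potential} gives
\[
\|\p_x\theta\|_{L^2(\T)} \le \cF(\|f\|_{W^{1,\infty}})\bigl(\|\p_x^2 f\|_{L^2}+\|\p_x^2 f\|_{L^\infty}\bigr).
\]

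Next I split $\p_x[G(f)f]$ into (a) terms where $\p_x$ falls on the explicit $\p_x f(x)$ in the numerator, producing a $\p_x^2 f(x)$ factor multiplied pointwise by the singular integral $\int \frac{\sinh\cdots}{\cosh-\cos}\theta$ that was bounded in Corollary \ref{coro:potential}; (b) terms where $\p_x$ falls on the argument $f(x)$ inside the $\cosh$ and $\sinh$, producing kernels with two differences (of Calder\'on commutator type) acting on $\theta$; and (c) the part where $\p_x$ is moved onto $\theta(x-x')$ via the change of variable, yielding a singular integral of $\p_x\theta$. Groups (a) and (b) are pointwise-bounded by $\cF(\|f\|_{W^{1,\infty}})|\p_x^2 f(x)|\,|\mathcal S\theta(x)|$ with $\mathcal S$ an $L^2$-bounded singular integral, giving by Cauchy-Schwarz and Gagliardo-Nirenberg
\[
\|\p_x^2 f\cdot \mathcal S\theta\|_{L^2}\le \|\p_x^2 f\|_{L^4}\|\mathcal S\theta\|_{L^4}\lesssim \|\p_x^2 f\|_{L^2}^{1/2}\|\p_x^3 f\|_{L^2}^{1/2}\|\theta\|_{H^{1/2}};
\]
interpolating $\|\theta\|_{H^{1/2}}\lesssim \|\theta\|_{L^2}^{1/2}\|\p_x\theta\|_{L^2}^{1/2}$ and using $\|\p_x^2 f\|_{L^\infty}\lesssim \|\p_x^2 f\|_{L^2}^{1/2}\|\p_x^3 f\|_{L^2}^{1/2}$ from the step above, then applying Young's inequality $ab\le \nu a^2+\nu^{-1}b^2$, one produces the claimed splitting into a small $\nu\|\p_x^3 f\|_{L^2}$ piece and a remainder of the form $\nu^{-1}\cF(\|f\|_{W^{1,\infty}})(\|\p_x^2 f\|_{L^2}+\|\p_x^2 f\|_{L^2}^2)$. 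Group (c) is estimated directly by the $L^2$-bound on singular integrals with Lipschitz coefficients and the previously derived bound on $\|\p_x\theta\|_{L^2}$.

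The principal obstacle is booking the Lipschitz-dependence quantitatively: classical Verchota-type singular integral bounds are qualitative, while here every constant must be explicit in $\|f\|_{\Lip}$ so that the Lipschitz maximum principle from Corollary \ref{prop:maxslop} actually yields uniform-in-$t$ control. Proposition \ref{prop:potential} bypasses this for the inversion of $\tfrac12 I-K^*[f]$, but the same care must be taken for each Calder\'on commutator in groups (a)-(c); the cleanest route, following \cite{CCGS2013, CCGR-PS2016}, is to split the kernel into a small-scale piece that matches the Hilbert transform up to an error controlled by $\|f\|_{\Lip}$, and a large-scale piece that is smoothing.
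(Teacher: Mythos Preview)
Your overall strategy---differentiate the contour representation, split according to where $\p_x$ falls, then balance via Gagliardo--Nirenberg and Young---is exactly the paper's, and your final paragraph about the small-scale/large-scale splitting (with a parameter $\delta$ chosen in terms of $\nu$ and $\|\p_x^2 f\|_{L^2}$) is precisely how the paper carries it out. Two points of divergence are worth flagging.

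First, your structural claim for group (b) is incorrect. When $\p_x$ falls on the $f(x)$ inside $\cosh$ and $\sinh$ (the paper's $G_1$ and $G_3$), the resulting terms carry $\delta_{x'}(\p_x f)(x)=\p_x f(x)-\p_x f(x-x')$ \emph{inside} the integral, not an explicit $\p_x^2 f(x)$ outside. They are therefore not pointwise bounded by $|\p_x^2 f(x)|\,|\mathcal{S}\theta(x)|$, and your $L^4$--$L^4$ H\"older argument does not apply to them as written. These are genuine first-order Calder\'on commutators in $\p_x f$; to bound them in $L^2$ with a constant depending only on $\|f\|_{W^{1,\infty}}$ you either need $\|\p_x^2 f\|_{L^\infty}$ (then Gagliardo--Nirenberg $\|\p_x^2 f\|_{L^\infty}\lesssim\|\p_x^2 f\|_{L^2}^{1/2}\|\p_x^3 f\|_{L^2}^{1/2}$ and Young), or---as the paper does---the explicit $\delta$-splitting: approximate $\delta_{x'}(\p_x f)\approx x'\p_x^2 f(x)$ on $|x'|<\delta$ up to an $O(|x'|^{3/2}\|\p_x^3 f\|_{L^2})$ error, bound the $|x'|>\delta$ piece brutally by $\delta^{-1/2}\|\p_x^2 f\|_{L^2}$, and optimise $\delta$. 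Your last paragraph already points to this route; it is not optional cleanup but the actual mechanism.

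Second, for group (c) the paper takes a slightly different path: rather than inverting $\tfrac12 I-K^*$ to bound $\|\p_x\theta\|_{L^2}$ and then applying an $L^2$ singular-integral bound, it recognises $G_4=4\pi\,\nabla S[f](\p_x\theta)\cdot(1,\p_x f)$ and invokes the tangential-derivative estimate \eqref{bound:SLP} to get $\|G_4\|_{L^2}\le\cF(\|f\|_{W^{1,\infty}})\|(\tfrac12 I-K^*)\p_x\theta\|_{L^2}$ directly. This avoids one application of Proposition~\ref{prop:potential} and lets the paper bound the \emph{already-computed} quantity $(\tfrac12 I-K^*)\p_x\theta=-\kappa\p_x^2 f+G_{4,1}+G_{4,2}$. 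Your inversion route works too (both sides are $\cF(\|f\|_{W^{1,\infty}})$), but note that the commutator $[\p_x,K^*[f]]\theta$ you must then bound is exactly $G_{4,1}+G_{4,2}$, and those again require the $\delta$-splitting with a $\|\p_x^3 f\|_{L^2}$-dependent error, not just $\|\p_x^2 f\|_{L^\infty}\|\theta\|_{L^2}$ as you wrote.
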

\begin{proof}
We shall write $f^\eps=f$, $\tt^\eps=\tt$, and   $\delta_{x'}f(x)=f(x)-f(x-x')$. Recall that $G(f)f$ can be written as on the right-hand side of the first equation in \eqref{Muskat:rGE} with $\tt=(\mez I-K^*)^{-1}(-\p_xf)$. Moreover, by virtue of Proposition \ref{prop:potential},
\bq
\| \tt\|_{L^2}\le C(1+\| \p_xf\|_{L^\infty})^\frac{5}{2}\| \p_xf\|_{L^2}\le \sqrt{2\pi}C(1+\| \p_xf\|_{L^\infty})^\frac{7}{2}.
\eq
We decompose $4\pi \p_x[G(f)f](x)=\sum_{j=1}^4G_j(x)$ where
\begin{equation*}
G_1(x)=\p_x f(x)\int\frac{\cosh(\delta_{x'}f(x))\delta_{x'}(\p_xf)(x)}{\cosh(\delta_{x'}f(x))-\cos x'}\theta(x-x')dx',
\end{equation*}
\begin{equation*}
  G_2(x)=\p_x^2 f(x)\int\frac{\sinh(\delta_{x'}f(x))}{\cosh(\delta_{x'}f(x))-\cos x'}\theta(x-x')dx',
\end{equation*}
\begin{equation*}
G_3(x)=-\int\frac{[\sin x'+\sinh(\delta_{x'}f(x))\p_x f(x)]\sinh(\delta_{x'}f(x))\delta_{x'}(\p_xf)(x)}{(\cosh(\delta_{x'}f(x))-\cos x')^2}\theta(x-x')dx',
\end{equation*}
and
\begin{equation*}
 G_4(x)=\int\frac{\sin x'+\sinh(\delta_{x'}f(x))\p_x f(x)}{\cosh(\delta_{x'}f(x))-\cos x'}\p_x\theta(x-x')dx'dx.
\end{equation*}
Here and throughout this proof we write $\int=\int_{-\pi}^\pi$.

{\bf Control of $G_1$}. We split  $G_1=\sum_{j=1}^5G_{1,j}$ where, for $\delta<1$ to be chosen,
$$
G_{1,1}(x)=\p_x f(x)\int\frac{(\cosh(\delta_{x'}f(x))-\cos x'+\cos x'-1)\delta_{x'}(\p_xf)(x)}{\cosh(\delta_{x'}f(x))-\cos x'}\theta(x-x')dx',
$$
$$
G_{1,2}(x)=\p_x f(x)\int_{|x'|>\delta}\frac{\delta_{x'}(\p_xf)(x)}{\cosh(\delta_{x'}f(x))-\cos x'}\theta(x-x')dx',
$$
$$
G_{1,3}(x)=\p_x f(x)\int_{|x'|<\delta}\frac{\delta_{x'}(\p_xf)(x)-x'\p_x^2f(x)}{\cosh(\delta_{x'}f(x))-\cos x'}\theta(x-x')dx',
$$
$$
G_{1,4}(x)=\p_x f(x)\p_x^2f(x)\int_{|x'|<\delta}\Big(\frac{1}{\cosh(\delta_{x'}f(x))-\cos x'}-\frac{2}{(1+(\p_xf(x))^2)(x')^2}\Big)x'\theta(x-x')dx',
$$
and
$$
G_{1,5}(x)=\frac{2\p_x f(x)\p_x^2f(x)}{1+(\p_xf(x))^2}\int_{|x'|<\delta}\frac{\theta(x-x')}{x'}dx'.
$$
We shall use frequently the facts that
\bq\label{trigidentity}
\cosh(\delta_{x'}f(x))-\cos x'=2\sinh^2\big(\frac{\delta_{x'}f(x)}{2}\big)+2\sin^2\big(\frac{x'}{2}\big)
\eq
and
\bq\label{ineq:sine}
 \frac{|x'|}{2}\le \frac{\pi}2\big|\sin \frac{x'}{2}\big|\quad\forall x'\in [-\pi, \pi].
\eq
Here and in what follows, $C$ denotes a universal constant that may change from line to line.  Then the integral kernel in $G_{1,1}$ is bounded by $C\| \p_xf\|_{L^\infty}$, giving
$$
\|G_{1,1}\|_{L^2}\leq C\|\p_xf\|_{L^\infty}^2\|\int |\tt(\cdot -x')|dx'\|_{L^2}\le 2\pi C\|\p_xf\|_{L^\infty}^2\|\theta\|_{L^2}\leq \cF(\|f\|_{W^{1,\infty}})\| \p_xf\|_{L^2}.
$$
Using Young's inequality for convolutions, we bound
\begin{align*}
\|G_{1,2}\|_{L^2}&\leq C\|\p_xf\|_{L^\infty}\| \p_xf\|_{C^\mez}\Big\|\int_{|x'|>\delta}\frac{|\theta(\cdot-x')|}{|x'|^\tdm}dx'\Big\|_{L^2}\\
&\leq \cF(\|f\|_{W^{1,\infty}})\| \p_x^2f\|_{L^2} \frac{\|\theta\|_{L^2}}{\delta^\mez}\leq  \cF(\|f\|_{W^{1,\infty}})\frac{\| \p_x^2f\|_{L^2}}{\delta^\mez}.
\end{align*}
Next in view of the inequalities
\[
|\delta_{x'}(\p_xf)(x)-x'\p_x^2f(x)|\le |x'|^\tdm\| \p_x^2f\|_{C^\mez(\T)}\le C |x'|^\tdm\| \p_x^2f\|_{\dot{H}^1(\T)}\le C |x'|^\tdm\| \p_x^3f\|_{L^2(\T)},
\]
we deduce that
\[
\|G_{1,3}\|_{L^2}\leq C\|\p_xf\|_{L^{\infty}}\|\p_x^3f\|_{L^2}\Big\|\int_{|x'|<\delta}\frac{|\theta(\cdot-x')|}{|x'|^\frac12}dx'\Big\|_{L^2}\leq \cF(\|f\|_{W^{1,\infty}})\|\p_x^3f\|_{L^2}\delta^{\frac12}.
\]
To control $G_{1,4}$, we use Taylor's expansion to have 
\begin{align*}
\|G_{1,4}\|_{L^2}&\leq C(1+\|\p_xf\|_{L^{\infty}})^2\|\p_x^2f\|_{L^\infty}(\|\p_x^2f\|_{L^\infty}+1)\Big\|\int_{|x'|<\delta}|\theta(\cdot-x')|dx'\Big\|_{L^2}\\
&\leq \cF(\|f\|_{W^{1,\infty}})(\|\p_x^2f\|_{L^\infty}^2\delta+\| \p_xf\|_{L^2}\delta),
\end{align*}
which in conjunction with Gagliardo-Nirenberg's inequality implies
$$
\|G_{1,4}\|_{L^2}\leq \cF(\|f\|_{W^{1,\infty}})(\|\p_x^2f\|_{L^2}\|\p_x^3f\|_{L^2}\delta+\| \p_xf\|_{L^2}).
$$
The integral in $G_{1,5}$ has a Calder\'on-Zygmund type kernel, whence
$$
\|G_{1,5}\|_{L^2}\leq C\|\p_x^2f\|_{L^\infty}\|\theta\|_{L^2}\leq \cF(\|f\|_{W^{1,\infty}})\|\p_x^2f\|_{L^2}^{\frac12}\|\p_x^3f\|_{L^2}^{\frac12}.
$$
Gathering the above estimates yields
\[
\| G_1\|_{L^2}\le \cF(\|f\|_{W^{1,\infty}})\big(\|\p_x^2f\|_{L^2}\|\p_x^3f\|_{L^2}\delta+\|\p_x^3f\|_{L^2}\delta^{\frac12}+\| \p^2_xf\|_{L^2}\delta^{-\mez}+\|\p_x^2f\|_{L^2}^{\frac12}\|\p_x^3f\|_{L^2}^{\frac12}\big),
\]
where we bounded the $\| \p_xf\|_{L^2}$ term in the estimates for $G_{1,1}$ and $G_{1, 4}$ by $C\|\p_x^2f\|_{L^2}^{\frac12}\|\p_x^3f\|_{L^2}^{\frac12}$.

For any $\nu\in (0, 1)$, we  choose
\[
\delta=\frac{\nu^2}{2^{11}\cF(\|f\|_{W^{1,\infty}})(1+\|\p_x^2f\|_{L^2})}
\]
with $\cF$ sufficiently large so that
\bq\label{G1}
\|G_1\|_{L^2}\leq  2^{-10}\nu\|\p_x^3f\|_{L^{2}}+\frac{1}{\nu}\cF(\|f\|_{W^{1,\infty}})\|\p_x^2f\|_{L^2}
(1+\|\p_x^2f\|_{L^2})^\mez.
\eq
{\bf Control of $G_2$}. We decompose $G_2=\sum_{j=1}^4G_{2,j}$ where
$$
G_{2,1}(x)=\p_x^2 f(x)\int\frac{\sinh(\delta_{x'}f(x))-\delta_{x'}f(x)}{\cosh(\delta_{x'}f(x))-\cos x'}\theta(x-x')dx',
$$
$$
G_{2,2}(x)=\p_x^2 f(x)\int\frac{\delta_{x'}f(x)-x'\p_xf(x)}{\cosh(\delta_{x'}f(x))-\cos x'}\theta(x-x')dx',
$$
$$
G_{2,3}(x)=\p_x^2 f(x)\p_xf(x)\int\Big(\frac{1}{\cosh(\delta_{x'}f(x))-\cos x'}-\frac{2}{(1+(\p_xf(x))^2)(x')^2}\Big)x'\theta(x-x')dx',
$$
and
$$
G_{2,4}(x)=\frac{2\p_x^2 f(x)\p_xf(x)}{1+(\p_xf(x))^2}\int\frac{\theta(x-x')}{x'}dx'.
$$
The integral kernel in $G_{2,1}$ is bounded by an absolute constant so that
$$
\|G_{2,1}\|_{L^2}\leq  \cF(\|f\|_{W^{1,\infty}})\|\p_x^2f\|_{L^2}\Big\|\int|\theta(\cdot-x')|dx'\Big\|_{L^\infty}\le \cF(\|f\|_{W^{1,\infty}})\|\p_x^2f\|_{L^2}.
$$
The identity
\bq\label{taylor2order}
x'\p_xf(x)-\delta_{x'}f(x)=(x')^2\int_0^1s\p_x^2f(x+(s-1)x')ds
\eq
allows us to get
\begin{align*}
\begin{split}
\|G_{2,2}\|_{L^2}&\leq \|\p_x^2f\|_{L^2}\Big\|\int_0^1s\int|\p_x^2f(\cdot+(s-1)x')||\theta(\cdot-x')| dx'ds\Big\|_{L^\infty}\\
&\leq \|\p_x^2f\|_{L^2}\int_0^1\frac{sds}{\sqrt{1-s}}\|\p_x^2f\|_{L^2}\|\theta\|_{L^2}\leq \cF(\|f\|_{W^{1,\infty}})\|\p_x^2f\|^2_{L^2},
\end{split}
\end{align*}
where we applied Cauchy-Schwarz's inequality for the integral in $x'$.

Using the inequality
\[
|\delta_{x'}f(x)-x'\p_xf(x)|\le |x'|^\tdm\| \p_xf\|_{C^\mez}\le C|x'|^\tdm\| \p^2_xf\|_{L^2},
\]
we can bound $G_{2,3}$ as
\begin{align*}
\begin{split}
\|G_{2,3}\|_{L^2}\leq \cF(\|f\|_{W^{1,\infty}})\|\p_x^2f\|^2_{L^2}\| \int |\tt(\cdot-x')|dx'\|_{L^2}\le  \cF(\|f\|_{W^{1,\infty}})\|\p_x^2f\|^2_{L^2}.
\end{split}
\end{align*}
Finally,  $G_{2,4}$ obeys the same bound as $G_{1, 5}$:
$$
\|G_{2,4}\|_{L^2}\leq  \|\p_x^2f\|_{L^\infty}\|\theta\|_{L^2}\leq \cF(\|f\|_{W^{1,\infty}})\|\p_x^2f\|_{L^2}^{\frac12}\|\p_x^3f\|_{L^2}^{\frac12}.
$$
Gathering all the above  estimates leads to
\[
\| G_2\|_{L^2}\le \cF(\|f\|_{W^{1,\infty}})\big(\|\p_x^2f\|_{L^2}^{\frac12}\|\p_x^3f\|_{L^2}^{\frac12}
+\|\p_x^2f\|^2_{L^2}+\|\p_x^2f\|_{L^2} \big)
\]
and thus, for any $\nu\in (0, 1)$, we have
\bq\label{G2}
\|G_2\|_{L^2}\leq 2^{-10}\nu\|\p_x^3f\|_{L^{2}}+\frac{1}{\nu}\cF(\|f\|_{W^{1,\infty}})(\|\p_x^2f\|_{L^2}+\|\p_x^2f\|_{L^2}^2).
\eq
{\bf Control of $G_3$}. We split $G_{3}=\sum_{j=1}^7G_{3,j}$
where
\begin{align*}
G_{3,1}(x)=&\int\frac{(\sinh(\delta_{x'}f(x))-\delta_{x'}f(x))\p_x f(x)\sinh(\delta_{x'}f(x))\delta_{x'}(\p_xf)(x)}{(\cosh(\delta_{x'}f(x))-\cos x')^2}\theta(x-x')dx'\\
&+\int\frac{\delta_{x'}f(x)\p_x f(x)(\sinh(\delta_{x'}f(x))-\delta_{x'}f(x))\delta_{x'}(\p_xf)(x)}{(\cosh(\delta_{x'}f(x))-\cos x')^2}\theta(x-x')dx',
\end{align*}
\begin{equation*}
G_{3,2}(x)=\int_{|x'|>\delta}\frac{(\sin x'+\delta_{x'}f(x)\p_x f(x))\delta_{x'}f(x)\delta_{x'}(\p_xf)(x)}{(\cosh(\delta_{x'}f(x))-\cos x')^2}\theta(x-x')dx',
\end{equation*}
\begin{align*}
G_{3,3}(x)=\int_{|x'|<\delta}\frac{(\sin x'+\delta_{x'}f(x)\p_x f(x))(\delta_{x'}f(x)-\p_xf(x)x')\delta_{x'}(\p_xf)(x)}{(\cosh(\delta_{x'}f(x))-\cos x')^2}\theta(x-x')dx',
\end{align*}
\begin{equation*}
G_{3,4}(x)=\p_xf(x)\int_{|x'|<\delta}\frac{(\delta_{x'}f(x)-\p_xf(x)x')x'\delta_{x'}(\p_xf)(x)}{(\cosh(\delta_{x'}f(x))-\cos x')^2}\theta(x-x')dx',
\end{equation*}
\begin{equation*}
G_{3,5}(x)=\p_xf(x)\int_{|x'|<\delta}\frac{(\sin x'+(\p_x f(x))^2x')x'(\delta_{x'}(\p_xf)(x)-\p_x^2f(x)x')}{(\cosh(\delta_{x'}f(x))-\cos x')^2}\theta(x-x')dx',
\end{equation*}
\begin{align*}
G_{3,6}(x)=&\p_xf(x)\p_x^2f(x)\int_{|x'|<\delta}(\sin x'+(\p_x f(x))^2x')(x')^2A(x,x')\theta(x-x')dx'\\
\mbox{with}\quad&A(x,x')=\Big(\frac{1}{(\cosh(\delta_{x'}f(x))-\cos x')^2}-\frac{4}{(1+(\p_xf(x))^2)^2(x')^4}\Big),
\end{align*}
and
\begin{align*}
G_{3,7}(x)=&\frac{4\p_xf(x)\p_x^2f(x)}{(1+(\p_xf(x))^2)^2}\int_{|x'|<\delta}\frac{\sin x'+(\p_x f(x))^2x'}{(x')^2}\theta(x-x')dx'.
\end{align*}
The integral kernel in $G_{3,1}$ is  smooth enough so that
$$
\|G_{3,1}\|_{L^2}\leq C\|\p_xf\|_{L^\infty}^2\|\theta\|_{L^2}\leq \cF(\|f\|_{W^{1,\infty}})\| \p_xf\|_{L^2}.
$$
By brutal force we have
$$
\|G_{3,2}\|_{L^2}\leq \cF(\|f\|_{W^{1,\infty}})\| \p_xf\|_{C^\mez}\Big\|\int_{|x'|>\delta}\frac{|\theta(\cdot-x')|}{|x'|^\tdm}dx'\Big\|_{L^2}\leq \cF(\|f\|_{W^{1,\infty}})\frac{\| \p^2_xf\|_{L^2}}{\delta^\mez}.
$$
For the next two terms, Gagliardo-Nirenberg's inequality gives
\begin{align*}
\begin{split}
\|G_{3,3}\|_{L^2}+\|G_{3,4}\|_{L^2}\leq& \cF(\|f\|_{W^{1,\infty}})
\|\p_x^2f\|_{L^{\infty}}^2\Big\|\int_{|x'|<\delta}|\theta(\cdot-x')|dx'\Big\|_{L^2}\\
\leq&
\cF(\|f\|_{W^{1,\infty}})\|\p_x^2f\|_{L^2}\|\p_x^3f\|_{L^2}\delta.
\end{split}
\end{align*}
The term $G_{3,5}$ is bounded by
$$
\|G_{3,5}\|_{L^2}\leq \cF(\|f\|_{W^{1,\infty}})\|\p_x^2f\|_{\dot{C}^\frac12}\Big\|\int_{|x'|<\delta}\frac{|\theta(\cdot-x')|}{|x'|^\frac12}dx'\Big\|_{L^2}\leq \cF(\|f\|_{W^{1,\infty}})\|\p_x^3f\|_{L^2}\delta^{\frac12}.
$$
As for $G_{1,4}$, we have
\begin{align*}
\begin{split}
\|G_{3,6}\|_{L^2}\leq& \cF(\|f\|_{W^{1,\infty}})\|\p_x^2f\|_{L^\infty}^2\Big\|\int_{|x'|<\delta}|\theta(\cdot-x')|dx'\Big\|_{L^2}\\
\leq& \cF(\|f\|_{W^{1,\infty}})\|\p_x^2f\|_{L^2}\|\p_x^3f\|_{L^2}\delta.
\end{split}
\end{align*}
Finally, the kernels in $G_{3,7}$ are of Calder\'on-Zygmund type in such a way that
$$
\|G_{3,7}\|_{L^2}\leq \cF(\|f\|_{W^{1,\infty}})\|\p_x^2f\|_{L^2}^{\frac12}\|\p_x^3f\|_{L^2}^{\frac12}.
$$
Gathering the above estimates we find that
$$
\|G_3\|_{L^2}\leq \cF(\|f\|_{W^{1,\infty}})\big(\|\p_x^2f\|_{L^2}^{\frac12}\|\p_x^3f\|_{L^2}^{\frac12}+\|\p_x^2f\|_{L^2}\|\p_x^3f\|_{L^2}\delta+\|\p_x^3f\|_{L^2}\delta^{\frac12}+\| \p^2_xf\|_{L^2}\delta^{-\mez}\big).
$$
For any $\nu\in (0, 1)$, using Young's inequality for the first term and choosing
\[
\delta^\mez=\frac{\nu}{2^{11}\cF(\|f\|_{W^{1,\infty}})(1+\|\p_x^2f\|_{L^2})},
\]
we obtain
\bq\label{G3}
\|G_3\|_{L^2}\leq 2^{-10}\nu\|\p_x^3f\|_{L^{2}}+\frac{1}{\nu}\cF(\|f\|_{W^{1,\infty}})\|\p_x^2f\|_{L^2}.
\eq
{\bf Control of $G_4$}.  We first note that
$$G_4(x)=4\pi\nabla (S[f](\p_x\theta))(x,f(x))\cdot(1,\p_x f(x)).
$$
In view of the bound \eqref{bound:SLP} for tangential derivative of $S[f]$, we have
\begin{equation*}
\|G_{4}\|_{L^2}\leq \cF(\|f\|_{W^{1,\infty}})\|(\frac{1}{2}I-K^*)(\p_x\theta)\|_{L^2}.
\end{equation*}
To proceed, we differentiate the second equation in \eqref{Muskat:rGE} with respect to $x$ to have
\begin{equation}\label{pxtheta}
(\frac{1}{2}I-K^*)(\p_x\theta)(x)=-\kappa\p_x^2f(x)+G_{4,1}+G_{4,2}
\end{equation}
where
\begin{equation*}
G_{4,1}(x)=\frac1{4\pi}\int\frac{\cosh(\delta_{x'}f(x))\delta_{x'}(\p_xf)(x)-\sin x'\p_x^2 f(x)}{\cosh(\delta_{x'}f(x))-\cos x'}\theta(x-x')dx',
\end{equation*}
and
\begin{equation}\label{G42}
G_{4,2}(x)=\frac{-1}{4\pi}\int\frac{(\sinh(\delta_{x'}f(x))-\sin x'\p_x f(x))\sinh(\delta_{x'}f(x))\delta_{x'}(\p_xf)(x)}{(\cosh(\delta_{x'}f(x))-\cos x')^2}\theta(x-x')dx'.
\end{equation}
Therefore,
\begin{equation}\label{G41G42}
\|G_{4}\|_{L^2}\leq \cF(\|f\|_{W^{1,\infty}})(\|\p_x^2f\|_{L^2}+\|G_{4,1}\|_{L^2}+\|G_{4,2}\|_{L^2})
\end{equation}
and it remains to bound $G_{4,1}$ and $G_{4,2}$. A further splitting gives $G_{4,1}=G_{4,1}^1+G_{4,1}^2+G_{4,1}^3+G_{4,1}^4$ where
$$
G_{4,1}^1(x)=\frac1{4\pi}\int\frac{(\cosh(\delta_{x'}f(x))-\cos x'+\cos x'-1)\delta_{x'}(\p_xf)(x)}{\cosh(\delta_{x'}f(x))-\cos x'}\theta(x-x')dx',
$$
$$
G_{4,1}^2(x)=\frac{\p_x^2 f(x)}{4\pi}\int\frac{x'-\sin x'}{\cosh(\delta_{x'}f(x))-\cos x'}\theta(x-x')dx',
$$
\begin{equation*}
G_{4,1}^3(x)=\frac1{4\pi}\int_{|x'|>\delta}\frac{\delta_{x'}(\p_xf)(x)- x'\p_x^2 f(x)}{\cosh(\delta_{x'}f(x))-\cos x'}\theta(x-x')dx',
\end{equation*}
and
\begin{equation*}
G_{4,1}^4(x)=\frac1{4\pi}\int_{|x'|<\delta}\frac{\delta_{x'}(\p_xf)(x)- x'\p_x^2 f(x)}{\cosh(\delta_{x'}f(x))-\cos x'}\theta(x-x')dx'.
\end{equation*}
Since the kernel integral in $G_{4,1}^1$ is bounded by $C\| \p_xf\|_{L^\infty}$, we have
\begin{equation*}
\|G_{4,1}^1\|_{L^2}\leq C\| \p_xf\|_{L^\infty}\| \tt\|_{L^2}\le\cF(\|f\|_{W^{1,\infty}})\| \p_xf\|_{L^2}.
\end{equation*}
Similarly, the kernel integral in $G_{4, 2}^2$ is bounded by an absolute constant, yielding
\begin{equation*}
\|G_{4,1}^2\|_{L^2}\leq  C\|\p_x^2f\|_{L^2}\|\theta\|_{L^2}\leq   \cF(\|f\|_{W^{1,\infty}})\|\p_x^2f\|_{L^2}.
\end{equation*}
Using convolution properties, we bound
\begin{align*}
\begin{split}
\|G_{4,1}^3\|_{L^2}&\leq C\|\p_xf\|_{\dot{C}^\frac12}\Big\|\int_{|x'|>\delta}\frac{|\theta(\cdot-x')|}{|x'|^{\tdm}}dx'\Big\|_{L^2}
+C\|\p_x^2f\|_{L^2}\Big\|\int_{|x'|>\delta}\frac{|\theta(\cdot-x')|}{|x'|}dx'\Big\|_{L^\infty}  \\
&\leq C\frac{\|\p_xf\|_{\dot{C}^\frac12}\|\theta\|_{L^2}}{\delta^{\frac12}}+C\frac{\|\p_x^2f\|_{L^2}\|\theta\|_{L^2}}{\delta^{\frac12}}\\
&\leq \cF(\|f\|_{W^{1,\infty}})\frac{\|\p_x^2f\|_{L^2}}{\delta^{\frac12}}.
\end{split}
\end{align*}
 Similarly, $G_{4,1}^4$ can be controlled as
 \begin{align*}
\begin{split}
\|G_{4,1}^4\|_{L^2}\leq& \|\p_x^2f\|_{\dot{C}^\frac12}\Big\|\int_{|x'|<\delta}\frac{|\theta(\cdot-x')|}{|x'|^{\mez}}dx'\Big\|_{L^2}
\leq \cF(\|f\|_{W^{1,\infty}})\|\p_x^3f\|_{L^2}\delta^{\frac12}.
\end{split}
\end{align*}
We have proved that
\[
\cF(\|f\|_{W^{1,\infty}})\|G_{4,1}\|_{L^2}\leq  \cF(\|f\|_{W^{1,\infty}})\|\p_x^3f\|_{L^2}\delta^{\frac12}+\cF(\|f\|_{W^{1,\infty}})\|\p_x^2f\|_{L^2}\big(\delta^{-\frac12}+1\big).
\]
For any $\nu\in(0, 1)$, choosing
\[
\delta^\mez=\frac{\nu}{2^{10}\cF(\|f\|_{W^{1,\infty}})(1+\|\p_x^2f\|_{L^2})},
\]
we obtain
\bq\label{G41}
\cF(\|f\|_{W^{1,\infty}})\|G_{4,1}\|_{L^2}\leq  2^{-10}\nu\|\p_x^3f\|_{L^2}+\frac{1}{\nu}\cF(\|f\|_{W^{1,\infty}})\big(\|\p_x^2f\|_{L^2}+\|\p_x^2f\|_{L^2}^2\big).
\eq
As for $G_{4, 2}$,  we decompose  $G_{4,2}=\sum_{j=1}^4G_{4,2}^j$ where
\begin{equation*}
G_{4,2}^1(x)=\frac{-1}{4\pi}\int\frac{(\sinh(\delta_{x'}f(x))-\delta_{x'}f(x))\sinh(\delta_{x'}f(x))\delta_{x'}(\p_xf)(x)}{(\cosh(\delta_{x'}f(x))-\cos x')^2}\theta(x-x')dx',
\end{equation*}
\begin{equation*}
G_{4,2}^2(x)=\frac{-\p_x f(x)}{4\pi}\int\frac{(x'-\sin x')\sinh(\delta_{x'}f(x))\delta_{x'}(\p_xf)(x)}{(\cosh(\delta_{x'}f(x))-\cos x')^2}\theta(x-x')dx',
\end{equation*}
\begin{equation*}
G_{4,2}^3(x)=\frac{-1}{4\pi}\int_{|x'|>\delta}\frac{(\delta_{x'}f(x)-x'\p_x f(x))\sinh(\delta_{x'}f(x))\delta_{x'}(\p_xf)(x)}{(\cosh(\delta_{x'}f(x))-\cos x')^2}\theta(x-x')dx',
\end{equation*}
and
\begin{equation*}
G_{4,2}^4(x)=\frac{-1}{4\pi}\int_{|x'|<\delta}\frac{(\delta_{x'}f(x)-x'\p_x f(x))\sinh(\delta_{x'}f(x))\delta_{x'}(\p_xf)(x)}{(\cosh(\delta_{x'}f(x))-\cos x')^2}\theta(x-x')dx'.
\end{equation*}
Since the integral kernel in $G_{4,2}^1$ is bounded by $ \cF(\|f\|_{W^{1,\infty}})$, it follows that
\begin{equation*}\label{G421and2}
\|G_{4,2}^1\|_{L^2}+\|G_{4,2}^2\|_{L^2}\leq     \cF(\|f\|_{W^{1,\infty}})\| \tt\|_{L^2}\le  \cF(\|f\|_{W^{1,\infty}})\| \p_xf\|_{L^2}.
\end{equation*}
Next we estimate $G_{4, 2}^3$ and $G_{4, 2}^4$ as
\begin{align*}
\begin{split}\label{G423}
\|G_{4,2}^3\|_{L^2}\leq& \cF(\|f\|_{W^{1,\infty}}) \Big\|\int_{|x'|>\delta}\frac{|\theta(\cdot-x')|}{|x'|^{2}}dx'\Big\|_{L^2} \leq \cF(\|f\|_{W^{1,\infty}})\frac{\|\p_xf\|_{L^2}}{\delta}
\end{split}
\end{align*}
and
\begin{align*}
\begin{split}
\|G_{4,2}^4\|_{L^2}\leq& \cF(\|f\|_{W^{1,\infty}})\|\p_x^2f\|_{L^{\infty}}^2 \Big\|\int_{|x'|<\delta}|\theta(\cdot-x')|dx'\Big\|_{L^2} \leq \cF(\|f\|_{W^{1,\infty}})\|\p_x^2f\|_{L^2}\|\p_x^3f\|_{L^{2}}\delta.
\end{split}
\end{align*}
We thus obtain
\[
\cF(\|f\|_{W^{1,\infty}})\|G_{4,2}\|_{L^2}\le \cF(\|f\|_{W^{1,\infty}})\|\p_x^2f\|_{L^2}\|\p_x^3f\|_{L^{2}}\delta+\cF(\|f\|_{W^{1,\infty}})\|\p_xf\|_{L^2}(\delta^{-1}+1).
\]
For any $\nu\in (0, 1)$, choosing
\[
\delta=\frac{\nu}{2^{10}\cF(\|f\|_{W^{1,\infty}})(1+\|\p_x^2f\|_{L^2})}
\]
gives
\bq\label{G42b}
\cF(\|f\|_{W^{1,\infty}})\|G_{4,2}\|_{L^2}\le 2^{-10}\nu\|\p_x^3f\|_{L^{2}}+\frac{1}{\nu}\cF(\|f\|_{W^{1,\infty}})\|\p_x^2f\|_{L^2}.
\eq
Then plugging \eqref{G41} and \eqref{G42b} into \eqref{G41G42} we obtain
\bq\label{G4}
\|G_{4}\|_{L^2}\le 2^{-9}\nu\|\p_x^3f\|_{L^{2}}+\frac{1}{\nu}\cF(\|f\|_{W^{1,\infty}})\|\p_x^2f\|_{L^2}.
\eq
Finally, combining \eqref{G1}, \eqref{G2}, \eqref{G3}  and \eqref{G4} we conclude that
\[
\|\p_x[G(f)f]\|_{L^2}\leq \nu\|\p_x^3f\|_{L^{2}}+\frac{1}{\nu}\cF(\|f\|_{W^{1,\infty}})(\|\p_x^2f\|_{L^2}+\|\p_x^2f\|_{L^2}^2),
\]
which finishes the proof.
\end{proof}
\section{Viscosity solutions and comparison principle}\label{section:viscosity}
By virtue of the comparison principle for the Dirichlet-Neumann operator given by Proposition \ref{prop:comparisonDN} and the fact that $G(f+C)(f+C)=G(f)f$ for any constant $C$, we have the following natural definition of viscosity solutions for the Muskat problem \eqref{Muskat:DN}.
\begin{defi}[{\bf Viscosity solutions}]\label{def:viscosity}
 A function $f: \T\times [0, T]$ is called a viscosity subsolution (resp. supersolution) of \eqref{Muskat:DN} on $(0, T)$  provided that

(i) $f$   is upper semicontinuous (resp. lower semicontinuous) on $\T\times [0, T]$, and \\
(ii) for every $\psi:\T\times (0, T)\to \Rr$ with $\p_t\psi\in  C(\T\times (0, T))$ and $\psi\in  C((0, T); C^{1, 1}(\T))$, if $f-\psi$ attains a global maximum (resp. minimum) over $ \T\times [t_0-r, t_0]$ at $(x_0, t_0)\in \T\times (0, T)$ for some $r>0$, then
\bq\label{ineq:defviscosity}
\p_t \psi(x_0, t_0)\le -\ka \big(G(\psi)\psi\big)(x_0, t_0)\quad(\text{resp. }\ge).
\eq
A  viscosity solution is both a viscosity subsolution and viscosity supersolution.
\end{defi}
The next proposition shows that if a viscosity solution is regular ($C^{1,1}$) at a point $(x_0, t_0)$ then it satisfies the equation classically at the same point.
\begin{prop}[{\bf Consistency}]\label{prop:roughtest}
Let $f$ be a viscosity subsolution of \eqref{Muskat:DN} on $(0, T)$. Assume that $f\in W^{1, \infty}(\T\times (0, T))$ and $f$ is $C^{1, 1}$ at $(x_0, t_0)\in \T\times (0, T)$. 
 Then $G(f)f$ is classically well-defined   at $(x_0, t_0)$ and
\bq\label{sub:roughtest}
\p_t f(x_0, t_0)\le -\ka \big(G(f)f\big)(x_0, t_0).
\eq
The corresponding statement  for viscosity supersolutions holds true.
\end{prop}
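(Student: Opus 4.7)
The plan is to derive \eqref{sub:roughtest} from the viscosity subsolution inequality \eqref{ineq:defviscosity} by testing against a carefully chosen one-parameter family $\{\psi_\varepsilon\}_{\varepsilon>0}$ touching $f$ from above at $(x_0,t_0)$, and then passing to the limit $\varepsilon\to 0^+$ using the pointwise $C^{1,\alpha}$ regularity of Theorem~\ref{thm1}. First observe that $(G(f(t_0))f(t_0))(x_0)$ is classically well-defined: since $f$ is $C^{1,1}$ at $(x_0,t_0)$, the restriction $f(\cdot,t_0)\in W^{1,\infty}(\T)$ is $C^{1,1}$ at $x_0$, so Corollary~\ref{coro:pointwiseelliptic} shows that the harmonic extension $\phi$ of $f(\cdot,t_0)$ is $C^{1,\alpha}$ at $(x_0,f(x_0,t_0))$ and the non-tangential normal derivative $\p_N\phi$ exists classically.

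Set $a=\p_x f(x_0,t_0)$, $b=\p_t f(x_0,t_0)$, and fix $M_0>0$ such that
\[
f(x,t)\le f(x_0,t_0)+a(x-x_0)+b(t-t_0)+M_0\bigl((x-x_0)^2+(t-t_0)^2\bigr)
\]
on a cylinder $B_{r_0}(x_0)\times[t_0-r_0,t_0]$. For each $\varepsilon\in(0,1)$ I build $\psi_\varepsilon$ satisfying (i) $\psi_\varepsilon\in C((t_0-r,t_0);C^{1,1}(\T))$ with $\p_t\psi_\varepsilon\in C$; (ii) $\psi_\varepsilon\ge f$ on $\T\times[t_0-r,t_0]$ with equality at $(x_0,t_0)$; (iii) $\p_x\psi_\varepsilon(x_0,t_0)=a$ and $\p_t\psi_\varepsilon(x_0,t_0)=b$; (iv) near $x_0$ at time $t_0$, the graph $\{y=\psi_\varepsilon(x,t_0)\}$ coincides with an arc of the osculating circle of radius $R_\varepsilon=1/(2M_0+2\varepsilon)$, so that $\Omega_{\psi_\varepsilon(t_0)}$ admits an interior tangent disk $D_\varepsilon$ at $(x_0,f(x_0,t_0))$; (v) away from $x_0$, $\psi_\varepsilon$ is extended by a $C^{1,1}$ cutoff combined with a large constant to secure (ii) globally. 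By the choice of radius $R_\varepsilon$ the same disk $D_\varepsilon$ is also contained in $\Omega_{f(t_0)}$, tangent at the same point. Applying \eqref{ineq:defviscosity} to $\psi_\varepsilon$,
\[
\p_t f(x_0,t_0)=\p_t\psi_\varepsilon(x_0,t_0)\le -\ka\,(G(\psi_\varepsilon(t_0))\psi_\varepsilon(t_0))(x_0),
\]
where the right-hand side is classically defined by Corollary~\ref{coro:pointwiseelliptic}.

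The main obstacle is to identify the limit of $(G(\psi_\varepsilon(t_0))\psi_\varepsilon(t_0))(x_0)$ as $\varepsilon\to 0^+$ with $(G(f(t_0))f(t_0))(x_0)$. Proposition~\ref{prop:comparisonDN} applied to $\psi_\varepsilon(\cdot,t_0)\ge f(\cdot,t_0)$ with equality at $x_0$ yields only the one-sided bound $(G(\psi_\varepsilon(t_0))\psi_\varepsilon(t_0))(x_0)\le (G(f(t_0))f(t_0))(x_0)$, which after multiplication by $-\ka$ has the \emph{wrong} sign for the conclusion. To produce the complementary asymptotic lower bound I would apply Theorem~\ref{thm1} to both harmonic functions $p=\phi-y$ on $\Omega_{f(t_0)}$ and $p_\varepsilon=\phi_\varepsilon-y$ on $\Omega_{\psi_\varepsilon(t_0)}$; both are nonnegative, vanish at the tangent point, and are $C^{1,\alpha}$ there with quantitative estimates uniform in $\varepsilon$. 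Restricting $p$ and $p_\varepsilon$ to the common interior disk $D_\varepsilon$, they are harmonic there, and the explicit Poisson-kernel integral formula for the Dirichlet--Neumann operator on the disk (Proposition~\ref{prop:disk}) represents $\p_N p$ and $\p_N p_\varepsilon$ at the tangent point as boundary integrals over $\p D_\varepsilon$. The uniform $C^{1,\alpha}$ control, together with the fact that $D_\varepsilon$ converges to the osculating disk of $\Omega_{f(t_0)}$ at $(x_0,f(x_0,t_0))$, forces these two integrals to share a common limit, yielding the equality
\[
\lim_{\varepsilon\to 0^+}\,(G(\psi_\varepsilon(t_0))\psi_\varepsilon(t_0))(x_0)=(G(f(t_0))f(t_0))(x_0).
\]
Substituting into the viscosity inequality then gives \eqref{sub:roughtest}. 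The supersolution case follows by the symmetric argument, using test functions touching from below and exterior tangent disks.
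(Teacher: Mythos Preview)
Your outline correctly identifies all the main ingredients the paper uses --- the $C^{1,\alpha}$ pointwise regularity from Corollary~\ref{coro:pointwiseelliptic}, the common interior tangent disk, and the explicit formula of Proposition~\ref{prop:disk} --- and you also correctly note that the monotonicity from Proposition~\ref{prop:comparisonDN} points the wrong way, so an actual \emph{convergence} argument is needed for $(G(\psi_\varepsilon)\psi_\varepsilon)(x_0)\to (G(f)f)(x_0)$. The gap is precisely in that convergence step.

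With your construction, $\psi_\varepsilon(\cdot,t_0)$ equals an osculating arc near $x_0$ and ``a $C^{1,1}$ cutoff combined with a large constant'' away from $x_0$. In particular $\psi_\varepsilon(\cdot,t_0)$ does \emph{not} converge to $f(\cdot,t_0)$ as $\varepsilon\to 0$; it converges to a fixed auxiliary profile. Consequently there is no reason for the harmonic extensions $\phi_\varepsilon$ to converge to $\phi$ anywhere in $\Omega_{f(t_0)}$, and hence the traces $g_\varepsilon=\phi_\varepsilon|_{\partial D_\varepsilon}$ have no reason to approach $g=\phi|_{\partial D_\varepsilon}$. The uniform $C^{1,\alpha}$ bound at the tangent point only controls the \emph{size} of the second differences appearing in the disk formula; it does not force $g_\varepsilon$ and $g$ to be close on $\partial D_\varepsilon$ away from the tangent point, which is exactly where the bulk of the integral lives. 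So the assertion that ``these two integrals share a common limit'' is unjustified: one of them is independent of $\varepsilon$ (it always equals $\partial_N p$ at the tangent point), and you have supplied no mechanism making the other converge to it.

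What the paper does differently is to build test functions $\psi_r$ that, in addition to lying above $f$ and below the tangent parabola near $X_0$, \emph{converge uniformly to $f$} and stay uniformly bounded in $W^{1,\infty}$ (this is the construction in Appendix~\ref{appendix:psir}, essentially a smoothed $\min$ of a mollification of $f$ with the parabola). That uniform convergence is then upgraded, via the variational characterization of the harmonic extension and a weak--compactness argument, to pointwise convergence $\phi_r\to\phi$ throughout $\Omega_f$. Only after this is the disk formula invoked: with $g_r\to g$ pointwise on $\partial B$ and the uniform $C^{1,\alpha}$ bound \eqref{uni:gr} providing a dominating function, dominated convergence gives $(G(\psi_r)\psi_r)(x_0)\to (G(f)f)(x_0)$. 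Your argument can be repaired by replacing the ``large constant'' extension of $\psi_\varepsilon$ with an extension that genuinely converges to $f$; but then you must supply the $\phi_\varepsilon\to\phi$ step, which is the substantive analytic content you are currently skipping.
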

\begin{proof}
According to Proposition \ref{prop:comparisonDN}, $G(f)f$ is classically well-defined at $(x_0, t_0)$. We shall write $X=(x, t)$ and $X_0=(x_0, t_0)$. Since $f$ is $C^{1,1}$ at $X_0$, for some $r_0\in (0, \frac{1}{10}\min\{\pi, t_0, T-t_0\})$ and $C>0$, we have
\bq\label{parabola:psibar}
f(X)\le f(X_0)+\na_Xf(X_0)\cdot (X-X_0)+\frac{C}{2}|X-X_0|^2:=\overline{\psi}(X),\quad |X-X_0|<r_0.
\eq
 Let $\psi$ be the tangent parabola with double opening:
 \bq\label{parabola:psi}
 \psi(X)=f(X_0)+\na_Xf(X_0)\cdot (X-X_0)+C|X-X_0|^2.
 \eq
We can find a family of functions $\psi_r \in C^\infty(\T\times \Rr)$, $r\in (0, r_0)$ satisfying
 \bq\label{properties:psir}
\begin{aligned}
&\psi_r\le \psi\quad\text{in } B_{r_0}(X_0),\\
& \psi_r\ge f\quad\text{on } \T\times [0, T],\\
&\forall \delta\in (0, r_0^2),\quad\psi_r\to f\quad\text{in}\,\, C(\T\times [\delta, T-\delta]),\\
&\forall \delta\in (0, r_0^2),\quad \sup_{r\in (0, \sqrt{\delta})}\| \psi_r\|_{W^{1, \infty}(\T\times [\delta, T-\delta])}<\infty.
\end{aligned}
\eq
The construction of $\psi_r$ is postponed to Appendix \ref{appendix:psir}. The properties in \eqref{properties:psir} imply that $\psi_r(t_0)$'s are uniformly $C^{1, 1}$ at $x_0$, i.e. there exists
  $M_0>0$ independent of $r\in (0, r_0)$ such that
 \bq\label{psir:uniC11}
 |\psi_r(x_0+x, t_0)+\psi_r(x_0-x, t_0)-2\psi_r(x_0, t_0)|\le M|x|^2\quad\forall |x|<r_0.
 \eq
 Since  each $\psi_r$ is a valid test function for the viscosity subsolution $f$, we have
\bq\label{sub:testpsir}
\p_t \psi_r(X_0)\le -\ka \big(G(\psi_r)\psi_r\big)(X_0).
\eq
At the maximum point $X_0$ of $f-\psi_r$, we have  $\p_t f=\p_t \psi_r$. Thus \eqref{sub:roughtest} will be a consequence of \eqref{sub:testpsir} and
\bq\label{conver:DNpsir0}
\lim_{r\to 0} \big(G(\psi_r)\psi_r\big)(X_0)=\big(G(f)f)(X_0).
\eq
We shall skip the time variable in the remainder of the proof because $t=t_0$ is fixed in \eqref{conver:DNpsir0}. The proof of  \eqref{conver:DNpsir0} proceeds in two steps.

{\bf Step 1.} Let $\phi_r$ (reps. $\phi$) be the harmonic extension of $\psi_r$ to $\Omega_r\equiv \Omega_{\psi_r}$ (resp. $\Omega_f$).
We claim that
\bq\label{conv:phir}
\lim_{r\to 0} \phi_r(x, y)=\phi(x, y)\quad\forall (x, y)\in \Omega_f,
\eq
where we note that $\Omega_f\subset \Omega_r$ for all $r$.  From \eqref{inf:phij}, \eqref{sup:phij} and the uniform boundedness of $\| \psi_r\|_{W^{1, \infty}(\T)}$, we have
\bq\label{uni:phir}
\| \phi_r\|_{L^\infty(\Omega_r)}\le \| \psi_r\|_{L^\infty(\T)}\le M.
\eq
Fix a sequence of bounded subsets $\Omega^j\Subset \Omega_f$ satisfying $\Omega^j\subset \Omega^{j+1}$ and $\cup_{j\ge 1}\Omega^j=\Omega_f$. Let $r_n$ be an arbitrary sequence of positive numbers converging to $0$. We relabel $\phi_n=\phi_{r_n}$ and $\Omega_n\equiv \Omega_{r_n}$. The uniform bound \eqref{uni:phir} implies that every subsequence of $\phi_n$ has a subsequence converging weakly-* in $L^\infty(\Omega_f)$. For notational simplicity, we write $\phi_n\to \phi_\infty$ weakly-* in $L^\infty(\Omega_f)$. We have the gradient estimate for the harmonic functions $\phi_j$
\[
\| \nabla_{x, y}\phi_n\|_{L^\infty(\Omega^j)}\le C_j\| \phi_n\|_{L^\infty(\Omega_f)}\le C_jM.
\]
By virtue of the Arzel\`a-Ascoli theorem, for every $j$ there exists a subsequence of $\phi_n$ converging to $\phi_\infty$ in $C(\overline{\Omega^j})$. Using the Cantor diagonal argument, we can find a subsequence $\phi_{n_k}\to \phi_\infty$ in $C(\overline{\Omega^j})$ for all $j$. Thus \eqref{conv:phir} will follow once we can prove that
\bq\label{phiinfty}
\phi_\infty=\phi.
\eq
 This can be achieved by using  the variational characterization of $\phi_{n_k}$ and $\phi$. Indeed, we recall from \eqref{def:vari} and \eqref{form:vari} that $\phi_n=u_n+\underline{\psi_n}$, where $\underline{\psi_n}\in \dot H^1(\Omega_n)$ with $\underline{\psi_n}(x, \psi_n(x))=\psi_n(x)$ (in the trace sense)
and $u_n\in \dot H^1_0(\Omega_n)$ satisfies
\bq\label{variform:phin}
\int_{\Omega_n}\na_{x, y}u_{n}\cdot \nabla_{x, y}\varphi dxdy=-\int_{\Omega_n}\na_{x, y}\underline{\psi_n}\cdot \nabla_{x, y}\varphi dxdy\quad\forall \varphi\in \dot H^1_0(\Omega_n).
\eq
In particular, the uniform bounds
\begin{align*}
&\| u_n\|_{\dot H^1(\Omega_n)}\le \| \underline{\psi_n}\|_{\dot H^1(\Omega_n)}\le C(1+\| \psi_n\|_{\Lip(\T^d)})\| \psi_n\|_{\dot H^\mez(\T^d)}\le \wt C,\\
&\| \phi_n\|_{\dot H^1(\Omega_n)}\le 2\| \underline{\psi_n}\|_{\dot H^1(\Omega_n)}\le 2\wt C
\end{align*}
hold. Since $\Omega_f\subset \Omega_n$, upon passing to a subsequence of $n_k$, we have
\bq\label{weakconv:phij}
u_{n_k}\wc u_\infty\quad\text{in } \dot H^1(\Omega_f),\quad \phi_{n_k}\wc \phi_\infty\quad\text{in } \dot H^1(\Omega_f).
\eq
As for the convergence of $\underline{\psi_n}$, we recall from Theorem \ref{theo:lifting} that we can choose $\underline{\psi_n}(x, y)=\psi_{\sharp}^n(x, y-\psi_n(x))$, where $\psi_{\sharp}^n(x, z)=e^{z|D_x|}\psi_n(x)$ for $(x, z)\in \T\times \Rr_-$. Since $\psi_n\to f$ in $C^\alpha(\T)$ for all $\alpha\in (0, 1)$,   $\psi_{\sharp}^n \to f_{\sharp}:=e^{z|D_x|}f(x)$ in $L^\infty(\T\times \Rr_-)$. Consequently, $\underline{\psi_n}(x, y)\to \underline{f}(x, y):=f_\sharp(x, y-f(x))$ in $L^\infty(\Omega_f)$. Then for all $\tt\in C^\infty_c(\overline{\Omega_f})$,  using integration by parts and the dominated convergence theorem gives
\begin{align*}
\int_{\Omega_f}\na_{x, y}\underline{\psi_n}\cdot\na_{x, y}\tt dxdy&=\int_{\T}\psi_n(x)\p_N\tt(x, f(x))dx-\int_{\Omega_f}\underline{\psi_n}\Delta_{x, y}\tt dxdy\\
&\to \int_{\T}f(x)\p_N\tt(x, f(x))dx-\int_{\Omega_f}\underline{f}\Delta_{x, y}\tt dxdy=\int_{\Omega_f}\na_{x, y}\underline{f}\cdot\na_{x, y}\tt dxdy,
\end{align*}
where $N=(-\p_x f, 1)$. As $ C^\infty_c(\overline{\Omega_f})$ is dense in $\dot H^1(\Omega_f)$, it follows that
\bq\label{wc:upsin}
\underline{\psi_n}\wc \underline{f}\quad \dot H^1(\Omega_f).
\eq
Recalling that $\phi_{n_k}=u_{n_k}+\underline{\psi_{n_k}}$, we deduce from  \eqref{weakconv:phij} and \eqref{wc:upsin} that
\bq\label{phiinfty:2}
\phi_\infty=u_\infty+\underline{f}.
\eq
On the other hand, since $\dot H^1_0(\Omega_f)\subset \dot H^1_0(\Omega_n)$, \eqref{variform:phin} implies
\[
\int_{\Omega_f}\na_{x, y}u_{n}\cdot \nabla_{x, y}\varphi dxdy=-\int_{\Omega_f}\na_{x, y}\underline{\psi_n}\cdot \nabla_{x, y}\varphi dxdy\quad\forall \varphi \in \dot H^1_0(\Omega_f).
\]
By virtue of \eqref{weakconv:phij} and \eqref{wc:upsin}, letting $n=n_k\to \infty$ in the preceding equality we obtain
\[
\int_{\Omega_f}\na_{x, y}u_\infty\cdot \nabla_{x, y}\varphi dxdy=-\int_{\Omega_f}\na_{x, y}\underline{f}\cdot \nabla_{x, y}\varphi dxdy\quad\forall \varphi \in \dot H^1_0(\Omega_f).
\]
This together with \eqref{phiinfty:2} proves that $\phi_\infty$ is the harmonic extension of $f$ to $\Omega_f$ in the variational sense. By the uniqueness of  variational solutions we conclude that $\phi_\infty=\phi$, proving \eqref{phiinfty}.

{\bf Step 2.} Recall that $f\in W^{1, \infty}(\T)$ is $C^{1, 1}$ at $x_0$. In addition, $\psi_r\in C^\infty(\T)$ is tangent to $f$ from above at $x_0$, so that $N_{r}(x_0)=N(x_0)$ and there exists an interior disk $B$ tangent to  $\Omega_f$ at $z_0=(x_0, f(x_0))$. Clearly, $B$ is also tangent to all $\Omega_r$. Assume without loss of generality that $B=B_1(0)$.  In addition, in view of the uniform $C^{1, 1}$ property \eqref{psir:uniC11}, Corollary \ref{coro:pointwiseelliptic} implies that $\phi_r$'s are uniformly $C^{1, \alpha}$ at $(x_0, f(x_0))$. In particular, there exists  $M>0$ such that for all $r <r_0$, the trace $g_r=\phi_r\vert_B$ satisfies
\bq\label{uni:gr}
|g_r(x_0+x)+g_r(x_0-x)-2g_r(x_0)|\le M|x|^{1+\alpha}\quad\forall |x|<r_0.
\eq
Let $g=\phi\vert_B$. Proposition \ref{prop:disk} then gives
\begin{align*}
 &\big(G(\psi_r)\psi_r\big)(x_0)=-\frac{|N(x_0)|}{8\pi}\int_{-\pi}^{\pi}\frac{g_r(x_0+x)+g_r(x_0-x)-2g_r(x_0)}{\sin^2(\frac{x}{2})}dx,\\
 &\big(G(\psi)\psi\big)(x_0)=-\frac{|N(x_0)|}{8\pi}\int_{-\pi}^{\pi}\frac{g(x_0+x)+g(x_0-x)-2g(x_0)}{\sin^2(\frac{x}{2})}dx.
\end{align*}
The convergence \eqref{conv:phir} implies that $g_r(x)\to g(x)$ for all $x\in \overline{B}$. The uniform bounds \eqref{uni:phir} and \eqref{uni:gr} then allow us to apply the dominated convergence theorem to conclude the claim \eqref{conver:DNpsir0}.
\end{proof}
Using Proposition \ref{prop:roughtest}, we prove that viscosity solutions obey the comparison principle. An immediate consequence is the uniqueness of viscosity solutions.
\begin{theo}[{\bf Comparison principle}]\label{theo:comparison:viscosity}
Assume that $f,~g:\T\times [0, T]\to \Rr$ are respectively a bounded viscosity subsolution  and supersolution of \eqref{Muskat:DN} on $(0, T)$. If $f(x, 0)\le g(x, 0)$ for all $x\in \T$, then $f(x, t)\le g(x, t)$ for all $(x, t)\in \T\times [0, T]$.
\end{theo}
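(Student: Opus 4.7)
The plan is to argue by contradiction using the standard sup/inf-convolution technique, which reduces the comparison principle for merely semicontinuous viscosity solutions to the consistency result (Proposition \ref{prop:roughtest}) combined with the pointwise comparison for the Dirichlet-Neumann operator (Proposition \ref{prop:comparisonDN}). Suppose $\sup_{\T\times[0,T]}(f-g)>0$. Since $f(\cdot,0)\le g(\cdot,0)$, adding a penalty $-\eta t-\alpha/(T-t)$ with $\eta,\alpha>0$ small ensures that $\sup_{\T\times(0,T)}\bigl(f-g-\eta t-\alpha/(T-t)\bigr)>0$ is attained at an interior point. For $\delta>0$ small, introduce the sup/inf-convolutions
\[
f^\delta(x,t)=\sup_{(y,s)\in\T\times[0,T]}\!\Bigl\{f(y,s)-\tfrac{1}{\delta}\bigl(|x-y|^2+(t-s)^2\bigr)\Bigr\},\quad g_\delta(x,t)=\inf_{(y,s)\in\T\times[0,T]}\!\Bigl\{g(y,s)+\tfrac{1}{\delta}\bigl(|x-y|^2+(t-s)^2\bigr)\Bigr\}.
\]
By Dini's theorem $f^\delta\downarrow f$ and $g_\delta\uparrow g$ uniformly, so for $\delta$ small the corresponding penalized difference still attains a positive maximum at some interior $(x_\delta,t_\delta)$. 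Moreover $f^\delta$ is semiconvex and $g_\delta$ is semiconcave, both with opening $O(1/\delta)$.

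Next I would verify that $f^\delta$ remains a viscosity subsolution on $\T\times(C\sqrt\delta,T-C\sqrt\delta)$ and $g_\delta$ a supersolution. This uses the translation invariance of the Muskat equation in $(x,t)$ together with the constant-shift invariance $G(h+C)(h+C)=G(h)h$: if $\psi$ touches $f^\delta$ from above at $(x_0,t_0)$ and $(y_0,s_0)$ realizes the sup in $f^\delta(x_0,t_0)$, then the shifted test function $\tilde\psi(x',t'):=\psi(x'-y_0+x_0,t'-s_0+t_0)+f(y_0,s_0)-\psi(x_0,t_0)$ touches $f$ from above at $(y_0,s_0)$, and the invariances yield $G(\tilde\psi)\tilde\psi(y_0,s_0)=G(\psi)\psi(x_0,t_0)$ and $\p_t\tilde\psi(y_0,s_0)=\p_t\psi(x_0,t_0)$. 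The restriction on $t$ guarantees $s_0\in(0,T)$, so that $f$ is a subsolution at $(y_0,s_0)$. At the maximum point $(x_\delta,t_\delta)$, semiconcavity of $g_\delta$ provides a paraboloid touching $f^\delta$ from above, while global semiconvexity of $f^\delta$ provides one from below; hence $f^\delta$ is two-sided pointwise $C^{1,1}$ at $(x_\delta,t_\delta)$, and symmetrically for $g_\delta$. Both functions are bounded in $W^{1,\infty}$ as semiconvex/semiconcave bounded functions.

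With two-sided $C^{1,1}$ regularity in hand, Proposition \ref{prop:roughtest} gives
\[
\p_tf^\delta(x_\delta,t_\delta)\le -\ka\,G(f^\delta)f^\delta(x_\delta,t_\delta),\qquad \p_tg_\delta(x_\delta,t_\delta)\ge -\ka\,G(g_\delta)g_\delta(x_\delta,t_\delta).
\]
The first-order maximum conditions give $\p_xf^\delta=\p_xg_\delta$ at $(x_\delta,t_\delta)$ and $\p_tf^\delta-\p_tg_\delta=\eta+\alpha(T-t_\delta)^{-2}>0$. Setting $C:=(f^\delta-g_\delta)(x_\delta,t_\delta)$, the frozen-time inequality $f^\delta(\cdot,t_\delta)-C\le g_\delta(\cdot,t_\delta)$ with equality and equal slope at $x_\delta$ allows us to invoke Proposition \ref{prop:comparisonDN} at $x_\delta$; combined with $G(h+C)(h+C)=G(h)h$ this yields $G(f^\delta)f^\delta(x_\delta,t_\delta)\ge G(g_\delta)g_\delta(x_\delta,t_\delta)$. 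Subtracting the two viscosity inequalities produces
\[
0<\eta+\alpha(T-t_\delta)^{-2}=\p_tf^\delta-\p_tg_\delta\le -\ka\bigl[G(f^\delta)f^\delta-G(g_\delta)g_\delta\bigr](x_\delta,t_\delta)\le 0,
\]
a contradiction.

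The main obstacle is establishing that the sup-convolution preserves the subsolution property despite the nonlocal nature of $G(f)f$. Commuting the sup operation with a nonlocal operator is typically delicate, but here it is rescued precisely by the pair of invariances of $G$ (translation in $x$ and shift by constants), which together make the Muskat operator behave like a local operator under the shift by $(y_0-x_0,s_0-t_0)$ used above. A secondary technicality is ensuring the supremum in the definition of $f^\delta(x,t)$ is realized at an interior time, which dictates the range of admissible $t$ and thus the need for the penalty $\alpha/(T-t)$ to keep $(x_\delta,t_\delta)$ uniformly away from $\{t=T\}$. The remaining ingredients—semiconvex regularity, application of Propositions \ref{prop:roughtest} and \ref{prop:comparisonDN}—are then routine.
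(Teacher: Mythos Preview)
Your approach is essentially the same as the paper's: regularize via sup/inf-convolutions, exploit the translation and constant-shift invariance of the Muskat operator to show that $f^\delta$ and $g_\delta$ remain viscosity sub/supersolutions, then at a penalized maximum use semiconvexity/semiconcavity to obtain two-sided $C^{1,1}$ regularity and invoke Propositions \ref{prop:roughtest} and \ref{prop:comparisonDN} for a contradiction. The paper uses only the linear penalty $-\eta t$ where you also add $-\alpha/(T-t)$; both variants work.

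There is, however, one genuine error: your appeal to Dini's theorem is invalid. Dini requires the limit function to be continuous, but $f$ is merely upper semicontinuous and $g$ merely lower semicontinuous, so $f^\delta\downarrow f$ and $g_\delta\uparrow g$ hold only pointwise, not uniformly. This matters exactly where you invoke it, namely to force the maximum of the penalized difference to occur at some $t_\delta>0$. (That the maximum is still positive is automatic from $f^\delta\ge f$, $g_\delta\le g$; the issue is excluding $t_\delta=0$.) The paper handles this via the half-relaxed limits
\[
\limsup_{\substack{\delta\to 0\\(y,s)\to(x,0)}}f^\delta(y,s)=f(x,0),\qquad
\liminf_{\substack{\delta\to 0\\(y,s)\to(x,0)}}g_\delta(y,s)=g(x,0),
\]
which, together with compactness of $\T$, yield $\sup_{x}\bigl(f^{\delta}(x,0)-g_{\delta}(x,0)\bigr)<\varepsilon_0/2$ for all sufficiently small $\delta$, so the maximum must be attained at positive time. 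Replacing your Dini step with this half-relaxed-limit argument closes the gap; the remainder of your proof is correct and coincides with the paper's.
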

\begin{proof}
For  classical sub and supersolutions, one can follow the proof of Proposition \ref{comparison:Muskatr}. Here the source difficulty comes from the low regularity of $f$ and $g$. We employ the regularization  using sup- and inf-convolutions. For small $\delta>0$, let $f^\delta$ and $g_\eps$ be respectively the  sup-convolution and inf-convolution of $f$ and  $g$:
\begin{align*}
&f^\delta(x, t)=\sup_{(y, s)\in \T\times [0, T]} f(y, t)-\frac{1}{2\delta}\big(|x-y|^2+|t-s|^2\big),\\
&g_\delta(x, t)=\inf_{(y, s)\in \T\times [0, T]} g(y, t)+\frac{1}{2\delta}\big(|x-y|^2+|t-s|^2\big)
\end{align*}
for $(x, t)\in \T\times [0, T]$. Since $f$ (reps. $g$) is upper (resp. lower) semicontinuous, the  supremum (reps. infimum) in the definition of $f^\delta$ (reps. $g_\delta$)  is in fact a maximum (reps. minimum).  Clearly, $f^\delta$ and $g_\delta$ are $2\pi$-periodic in $x$. We record the following standard properties of sup- and inf-convolution  (see e.g. \cite{CrandallIshiiLion}).
\begin{itemize}
\item[(i)] $f^\delta,\, g_\delta\in \Lip(\T\times [0, T])$.
\item[(ii)] $f^\delta$ (reps. $g_\delta)$ is semiconvex  (reps. semiconcave) in the senses that each point has a tangent paraboloid from below (reps. above) with opening $\delta^{-1}$.
\item[(iii)] The half-relaxed limits
\bq\label{Gamma:conv}
\limsup_{\substack{\delta\to 0\\ (y, s)\to (x, t)}}f^\delta(y, s)=f(x, t),\quad
\liminf_{\substack{\delta\to 0\\(y, s)\to (x, t)}}g_\delta(y, s)=g(x, t)
\eq
hold for all $(x, t)\in \T\times [0, T]$.
\end{itemize}
The boundedness of $f$ and $g$ is only used in (i).

Next we prove that each $f^\delta$ (reps. $g_\delta$) is a viscosity subsolution (reps. supersolution) of \eqref{Muskat:DN}. This follows from the translation invariance of \eqref{Muskat:DN}. Indeed, let $\psi$ be a test function as in Definition \ref{def:viscosity} such that $f^\delta-\psi$ has a global maximum over $\T\times [t_0-r, t_0]$ at  $(x_0, t_0)$ for some $r>0$. Let $(y_0, s_0)\in \T\times [0, T]$ be the point where the maximum in the definition of $f^\delta(x_0, t_0)$ is attained.  We have that
\begin{align*}
&f^\delta(x_0, t_0)= f(y_0, s_0)-\frac{1}{2\delta}\big(|x_0-y_0|^2+|t_0-s_0|^2\big),\\
& f^\delta(x, t)\ge f(x-x_0+y_0, t-t_0+s_0)-\frac{1}{2\delta}\big(|x_0-y_0|^2+|t_0-s_0|^2\big).
\end{align*}
Therefore, the function
\[
\wt \psi(x, t)=\psi(x+x_0-y_0, t+t_0-s_0)+\frac{1}{2\delta}\big(|x_0-y_0|^2+|t_0-s_0|^2\big)
\]
is a valid test function for which  $f-\wt\psi$ has a global maximum over $\T\times [s_0-r, s_0]$ at $(y_0, s_0)$. Therefore,  $\p_t \wt\psi(y_0, s_0)\le -\ka \big(G(\wt\psi)\wt\psi\big)(y_0, s_0)$, and hence $\p_t \psi(x_0, t_0)\le -\ka \big(G(\psi)\psi\big)(x_0, t_0)$. Thus each $f^\delta$ is viscosity subsolution of \eqref{Muskat:DN}. By an analogous argument, each $g_\delta$ is viscosity supersolution of \eqref{Muskat:DN}.

We claim  that for very $\eps>0$, there exists  $\delta(\eps)>0$ such that for all $\delta\le \delta(\eps)$,
\bq\label{comparision:reg}
f^\delta(x, t)\le g_\delta(x, t)+\eps\quad\forall (x, t)\in \T\times [0, T].
\eq
Taking this for granted, the half-relaxed limits in \eqref{Gamma:conv} yield $f(x, t)\le g(x, t)$ for all $(x, t)\in \T\times [0, T]$, proving the comparison principle.  To prove \eqref{comparision:reg}, we assume by contradiction that for some $\eps_0>0$, there exists a sequence $\delta_n\to 0$ such that for all $n$,
 \[
M_n:=\max_{\T\times [0, T]}(f^{\delta_n}-g_{\delta_n})=(f^{\delta_n}-g_{\delta_n})(x_n, t_n)>\eps_0.
\]
Because
\[
\lim_{n\to \infty}(f^{\delta_n}(x, 0)-g_{\delta_n}(x, 0))\le f(x, 0)-g(x, 0)\le 0,
\]
we have $f^{\delta_n}(x, 0)-g_{\delta_n}(x, 0)<\frac{\eps_0}{2}$  for all  $n\ge n_0$. This implies in particular that $t_n>0$ for all $n\ge n_0$.  Choose $\eta>0$ sufficiently small so that
 \[
 (f^{\delta_{n_0}}-g_{\delta_{n_0}}-\eta t)(x_{n_0}, t_{n_0})=M_{n_0}-\eta t_{n_0}>\frac{2\eps_0}{3},
 \]
 and hence
\[
M_*:=\max_{\T\times [0, T]}(f^{\delta_{n_0}}-g_{\delta_{n_0}}-\eta t)>\frac{2\eps_0}{3}.
\]
 Moreover, $M_*$  is attained at some point $(x_*, t_*)$ with $t_*>0$ since $\max_{\T}(f^\delta-g_\delta-\eta t)\vert_{t=0}\le \frac{\eps_0}{2}$. Consequently,
 \[
 \max_{\T\times [0, T]}(f^{\delta_{n_0}}-g_{\delta_{n_0}}-\eta t-M_*)=0
 \]
 and is attained at $(x_*, t_*)$. In what follows we shall write $\delta_{n_0}=\delta$ to alleviate the notation. Then the smooth function $\eta t+M_*$ touches $f^\delta-g_\delta$ from above at $(x_*, t_*)$, so that $f^\delta-g_\delta$ has a tangent paraboloid from above at $(x_*, t_*)$. On the other hand, both $f^\delta$ and $-g_\delta$ have a tangent paraboloid from below at $(x_*, t_*)$, hence they are $C^{1, 1}$ at $(x_*, t_*)$. Then we can apply Proposition \ref{prop:roughtest}  to have that $f^\delta$ and $g_\delta$ are classical sub and supersolutions  at $(x_*, t_*)$:
 \[
 \p_t f^\delta (x_*, t_*)\le -\ka \big(G(f^\delta)f^\delta\big)(x_*, t_*),\quad \p_t g_\delta (x_*, t_*)\ge -\ka \big(G(g_\delta)g_\delta\big)(x_*, t_*).
 \]
 By virtue of Proposition \ref{prop:comparisonDN},
 \[
 G(f^\delta)f^\delta(x_*, t_*)\ge G(g_\delta+\eta t+M_*)(g_\delta+\eta t+M_*)(x_*, t_*).
 \]
In addition, since $t_*>0$ we have
\[
\p_t(g_\delta+\eta t+M_*)(x_*, t_*)\le \p_t f^\delta(x_*, t_*).
\]
 It follows that
\begin{align*}
\p_t(g_\delta+\eta t+M_*)\le \p_t f^\delta&\le  -\ka G(f^\delta)f^\delta\\
&\le  -\ka G(g_\delta+\eta t+M_*)(g_\delta+\eta t+M_*)\\
&= -\ka G(g_\delta)g_\delta\le \p_t g_\delta
\end{align*}
at $(x_*, t_*)$. This leads to the contradiction $\eta\le 0$.
 \end{proof}
By virtue of Theorem \ref{theo:comparison:viscosity}, we have
 \begin{coro}
Proposition \ref{prop:maxslop} is valid for viscosity solutions.
 \end{coro}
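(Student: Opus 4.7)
The strategy is to replicate the argument of Proposition \ref{prop:maxslop} verbatim, but replacing the comparison principle for smooth solutions (Proposition \ref{comparison:Muskatr}) with its viscosity counterpart (Theorem \ref{theo:comparison:viscosity}). The only additional ingredient required is the observation that the equation \eqref{Muskat:DN} is invariant under (i) horizontal translations $x\mapsto x+y$, (ii) time translations $t\mapsto t+\tau$, and (iii) vertical shifts $f\mapsto f+C$ for constants $C\in\Rr$. Invariances (i)--(ii) are manifest from the form of the equation, while (iii) follows from the identity $G(f+C)(f+C)=G(f)f$ used in the paragraph preceding Definition \ref{def:viscosity}.

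The first step is to promote these invariances to viscosity (sub/super)solutions. Precisely, I claim: if $f$ is a viscosity subsolution on $(0,T)$, then for every fixed $y\in\T$ and every $C\ge 0$ the function $h(x,t):=f(x+y,t)+C$ is also a viscosity subsolution on $(0,T)$. This is a direct unwinding of Definition \ref{def:viscosity}: if $h-\psi$ attains a global maximum over $\T\times[t_0-r,t_0]$ at $(x_0,t_0)$, then $f-\wt\psi$ attains the corresponding maximum at $(x_0+y,t_0)$, where $\wt\psi(x,t):=\psi(x-y,t)-C$. Applying the subsolution property of $f$ to the test function $\wt\psi$ and using $\p_t\wt\psi(x_0+y,t_0)=\p_t\psi(x_0,t_0)$ together with $G(\wt\psi)\wt\psi(x_0+y,t_0)=G(\psi)\psi(x_0,t_0)$ (by invariances (i) and (iii) applied to $\psi$), we obtain the desired inequality \eqref{ineq:defviscosity} for $h$. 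The analogous statement holds for supersolutions.

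With this symmetry in hand, the modulus of continuity estimate \eqref{max:moduli} follows exactly as in Corollary \ref{prop:maxslop}. Given a modulus $\gamma$ for $f(\cdot,0)$ we have $f(x+y,0)+\gamma(|y|)\ge f(x,0)$ for all $x,y\in\T$. Since $f$ is a viscosity solution and the left-hand side is a viscosity solution by the symmetry above (taking $C=\gamma(|y|)$), Theorem \ref{theo:comparison:viscosity} yields $f(x+y,t)+\gamma(|y|)\ge f(x,t)$ on $\T\times[0,T]$. Exchanging $y\leftrightarrow -y$ gives the two-sided bound, and the special case $\gamma(z)=z\|f(\cdot,0)\|_{\Lip}$ yields the Lipschitz bound \eqref{max:slope}. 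The $L^\infty$ stability \eqref{max:amp} is proved identically: for two viscosity solutions $f_1,f_2$, the function $f_2(x,t)+\|f_1(\cdot,0)-f_2(\cdot,0)\|_{L^\infty}$ is a viscosity supersolution dominating $f_1(x,0)$ at $t=0$, so Theorem \ref{theo:comparison:viscosity} and a role-reversal conclude.

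The only nontrivial point is the invariance claim for viscosity solutions in the second paragraph, but given that Definition \ref{def:viscosity} uses $C^{1,1}$ test functions and the relation $G(f+C)(f+C)=G(f)f$ is pointwise for smooth $f$, there is no genuine obstacle; the argument is essentially a relabelling of test functions. All remaining steps are textbook consequences of the comparison principle.
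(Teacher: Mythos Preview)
Your proposal is correct and follows exactly the approach the paper intends: the paper states the corollary immediately after Theorem \ref{theo:comparison:viscosity} with no written proof beyond the phrase ``By virtue of Theorem \ref{theo:comparison:viscosity},'' leaving implicit precisely the translation/shift invariance of viscosity sub- and supersolutions that you have carefully spelled out. Your argument is a faithful elaboration of what the paper takes for granted.
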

\section{Proof of Theorem \ref{theo:main}}
 Let $f_0$ be an arbitrary initial surface in $W^{1, \infty}(\T)$. We construct a global solution to the contour dynamics formulation \eqref{reform:f}-\eqref{reform:tt} by regularizing initial data and the Muskat problem. Precisely, for $\eps\in (0, 1)$, let $f_0^\eps=\Gamma_\eps*f_0(x)$ be the mollification of $f_0$ where $\Gamma_\eps$ is an approximation of the identity. Since $f_0^\eps\in H^s(\T)$ for any $s>0$,   Proposition \ref{GlobalEpsilonProblem} yields the unique global solution $f^\eps$ to the regularized Muskat equation \eqref{Muskat:r} with initial data $f_0^\eps$. Moreover,  $f^\eps$ is smooth and in view of \eqref{max:C1},
 \bq\label{ub:f}
 \begin{aligned}
& \| f^\eps(t)\|_{L^\infty(\T)}\le \| f^\eps_0\|_{L^\infty(\T)}\le \| f_0\|_{L^\infty(\T)},\\
& \| f^\eps(t)\|_{\Lip(\T)}\le \| f^\eps_0\|_{\Lip(\T)}\le \| f_0\|_{\Lip(\T)}.
 \end{aligned}
 \eq
Setting $\tt^\eps=(\mez I-K^*[f^\eps])(-\ka f^\eps)$, we deduce from Proposition \ref{prop:potential} and \eqref{ub:f} that
\bq\label{ub:tt}
\| \tt^\eps(t)\|_{L^2(\T)}\le C(1+\| f(t)\|_{\Lip(\T)})^\frac{5}{2}\| \p_xf(t)\|_{L^2(\T)}\le C(1+\|f_0\|_{\Lip(\T)})^\frac{7}{2}\quad\forall t\ge 0.
\eq
Let $\eps_n\to 0$ and relabel $f_n=f^{\eps_n}$ and $\tt_n=\tt^{\eps_n}$. From the uniform bounds \eqref{ub:f} and \eqref{ub:tt}, we obtain the weak* convergences (upon extracting subsequences)
\bq\label{weak*}
f_n \overset{\ast}{\rightharpoonup} f~\text{in}~L^\infty([0, \infty); W^{1, \infty}(\T)),\quad \tt_n \overset{\ast}{\rightharpoonup} \tt~\text{in}~L^\infty([0, \infty); L^2(\T)).
\eq
In particular, \eqref{ub:f} implies that
\bq
 \| f(t)\|_{L^\infty(\T)}\le  \| f_0\|_{L^\infty(\T)},\quad  \| f(t)\|_{\Lip(\T)}\le  \| f_0\|_{\Lip(\T)}.
\eq
We now prove that $f$ and $\tt$ satisfy \eqref{reform:f}-\eqref{reform:tt} for all $t>0$. Fix an arbitrary time $T>0$.

{\bf Step 1: Strong convergence in $C( \T\times [0, T])$}. Combining the $L^2$ estimate \eqref{velocitybound} for $G(f)g$ and the uniform Lipschitz bound \eqref{ub:f}, we have
\[
\| G(f_n)f_n\|_{L^\infty([0, T]; L^2(\T))}\le C(1+\| f_0\|_{\Lip(\T)})^2\| f_0\|_{\Lip(\T)}
\]
and
\[
 \| \p^2_xf_n\|_{L^\infty([0, T]; H^{-1})}\le  \| \p_xf_n\|_{L^\infty([0, T]; L^2)}\le C \| f_n\|_{L^\infty([0, T]; \Lip(\T))}\le C\| f_0\|_{\Lip(\T)}.
 \]
 It then follows from equation \eqref{Muskat:r} that $\p_t f^\eps$ is uniformly bounded in $L^\infty([0, T]; H^{-1}(\T))$. We have the continuous embedding $W^{1, \infty}(\T)\subset C(\T)\subset H^{-1}(\T)$ where the first one is compact by the Azel\`a-Ascoli theorem. Thus the Aubin-Lions lemma \cite{Lions1969} implies that
 \bq\label{strong:cv}
 f_n\to f\quad\text{in}~C(\T\times [0, T]).
 \eq
 In particular, $\lim_{t\to 0^+}f(\cdot ,t)=f_0(\cdot)$.

 {\bf Step 2: $f$ is the unique viscosity solution}. It suffices to prove that $f$ is a viscosity subsolution since its uniqueness then follows at once from Theorem \ref{theo:comparison:viscosity}. Indeed, assume that for  $\psi:\T\times (0, T)\to \Rr$ with $\p_t\psi\in  C(\T\times (0, T))$ and $\psi\in  C((0, T); C^{1, 1}(\T))$, $f-\psi$ attains a global maximum over $ \T\times [t_0-r, t_0]$ at $(x_0, t_0)\in \T\times (0, T)$ for some $r>0$. Setting $\overline{\psi}(x, t)=\psi(x, t)+|t-t_0|^2$ we have that $f-\overline\psi$ attains a strict global maximum over $\T\times [t_0-r, t_0]$ at $(x_0, t_0)$.  By the uniform convergence \eqref{strong:cv}, there exists for each sufficiently large $n$ a point $(x_n ,t_n)\in \T\times [t_0-r, t_0]$ such that $f_n-\overline\psi$ attains a  global maximum at $(x_n, t_n)$ and $(x_n, t_n)\to (x_0, t_0)$. Set
 \[
 \wt \psi_n=\overline\psi+M_n,\quad M_n:=\max_{\T\times [t_0-r, t_0]}(f_n-\overline\psi)
 \]
  so that $f_n-\wt\psi_n$ attains a zero  global maximum over $\T\times [t_0-r, t_0]$ at $(x_n, t_n)$. It follows that  \bq\label{proofviscosity:1}
   \p_t\wt\psi_n(x_n, t_n)\le \p_tf_n(x_n, t_n),\quad   \p_x\wt\psi_n(x_n, t_n)= \p_xf_n(x_n, t_n).
\eq
The comparison principle in Proposition \ref{prop:comparisonDN}  gives
  \bq\label{proofviscosity:2}
  \big(G(f_n)f_n\big)(x_n, t_n)\ge \big(G(\wt \psi_n)\wt\psi_n\big)(x_n, t_n).
  \eq
  For any function $g\in C^{1, 1}(\T)$, we denote the generalized second order derivative of $g$ at $x$ by
  \[
 \p^{2, *}g(x)=\{a: \exists  x_j\to x~\text{with}~g~\text{twice differentiable at}~x_j~\text{and}~\p_x^2 g(x_j)\to a\}.
  \]
Since $g\in C^{1, 1}(\T)$, $\p^2g(x)$ exists for almost every $x\in \T$ and $|\p^2g(x)|\le \| \p g\|_{\Lip(\T)}$. Consequently,  $ \p^{2, *}g(x)$ is nonempty for all $x\in \T$ and $|a|\le \| \p g\|_{\text{Lip}}$ for all $a\in  \p^{2, *}g(x)$. If in addition $g$ is twice differentiable near $x$ and $\p^2g$ is continuous at $x$  then $\p^{2, *}g(x)=\{\p^2g(x)\}$. According to  the second order optimality condition proved in \cite{OptimalityCondition}, if $g\in C^{1, 1}(\T)$ has a local maximum at $x$ then there exists $a\in  \p^{2, *}g(x)$ such that $a\le 0$. Applying this with $g(x)=f_n(x, t_n)-\wt \psi_n(x, t_n)$ we find that
\[
\p_x^2f_n(x_n, t_n)\le a_n\quad\text{for some}~a_n\in \p_x^{2, *}\wt\psi_n(x_n, t_n).
\]
Combining this with \eqref{proofviscosity:1} and \eqref{proofviscosity:2} yields
  \[
  \begin{aligned}
  \p_t\wt\psi_n(x_n, t_n)&\le \p_tf_n(x_n, t_n)=-\ka \big(G(f_n)f_n\big)(x_n, t_n)+\frac{1}{n}\p_x^2f_n(x_n, t_n)\\
  &\le -\ka \big(G(\wt\psi_n)\wt\psi_n\big)(x_n, t_n)+\frac{1}{n}a_n.
  \end{aligned}
  \]
  Consequently,
  \[
  \p_t\overline{\psi}(x_n, t_n)\le -\ka \big(G(\overline\psi)\overline\psi\big)(x_n, t_n)+\frac{1}{n}a_n.
   \]
  Since
  \[
 \sup_{n\in \Nn}|a_n|\le  \sup_{n\in \Nn} \| \p_x\wt\psi_n\|_{L^\infty([t_0-r, t_0]; \text{Lip}(\T))}=\| \p_x\psi\|_{L^\infty([t_0-r, t_0]; \Lip(\T))}
  \]
  and  $\p_t\overline{\psi}\in  C(\T\times (0, T))$,  letting $n\to \infty$ in the preceding inequality yields
  \[
   \p_t\overline{\psi}(x_0, t_0)\le -\ka \lim_{n\to \infty} \big(G(\psi)\psi\big)(x_n, t_n),
   \]
   where we used the fact that $G(\overline\psi)\overline\psi=G(\psi)\psi$. We claim that
   \bq\label{lim:Gn}
  \lim_{n\to \infty} \big(G(\psi)\psi\big)(x_n, t_n)= \big(G(\psi)\psi\big)(x_0, t_0).
   \eq
   Taking this for granted and noticing that $\p_t \overline{\psi}(x_0, t_0)=\p_t \psi(x_0, t_0)$  we obtain
  \[
  \p_t\psi(x_0, t_0)\le -\ka \big(G(\psi)\psi\big)(x_0, t_0),
  \]
whence $f$ is a viscosity subsolution. Analogously, $f$ is also a viscosity supersolution. To prove the claim \eqref{lim:Gn}  we write
\begin{align*}
\big(G(\psi)\psi\big)(x_n, t_n)-\big(G(\psi)\psi\big)(x_0, t_0)&=\big(G(\psi(t_n))\psi(t_n)\big)(x_n)-\big(G(\psi(t_0))\psi(t_0)\big)(x_n)\\
&\quad+\big(G(\psi(t_0))\psi(t_0)\big)(x_n)-\big(G(\psi(t_0))\psi(t_0)\big)(x_0)\\
&=N^n_1+N^n_2.
\end{align*}
  For $\psi(t_0)\in C^{1, 1}(\T)$, $G(\psi(t_0))\psi(t_0)\in C^\alpha(\T)$ for all $\alpha\in (0, 1)$ and thus $N^n_2\to 0$. As for $N^n_1$ we apply the continuity and contraction estimates in Propositions \ref{prop:estDN} and \ref{theo:contraDN} to have
  \begin{align*}
  |N_1^n|&\le \|G(\psi(t_n))\psi(t_n)-G(\psi(t_0))\psi(t_0) \|_{C(\T)}\\
  &\le\|G(\psi(t_n))\psi(t_n)-G(\psi(t_0))\psi(t_0) \|_{H^1(\T)}\\
   & \le\|G(\psi(t_n))[\psi(t_n)-\psi(t_0)]\|_{H^1(\T)}+\| G(\psi(t_n))\psi(t_0)-G(\psi(t_0))\psi(t_0) \|_{H^1(\T)}\\
  &\le \cF\big(\| (\psi(t_n), \psi(t_0))\|_{H^2(\T)}\big)\| {\psi}(t_n)-{\psi}(t_0)\|_{H^2(\T)}\to 0.
  \end{align*}
since $\psi\in C((0, T); C^{1, 1}(\T))$. This finishes the proof of \eqref{lim:Gn}.

   We shall prove in the following steps that $f$ and $\tt$ solve \eqref{reform:f}-\eqref{reform:tt} by passing to the limit in the equivalent  formulation \eqref{Muskat:rGE} of \eqref{Muskat:r}.

{\bf Step 3: Equation \eqref{reform:tt}}.  For any $\vp\in C_c^\infty(\T\times [0, T))$, the second equation in \eqref{Muskat:rGE} together with \eqref{def:K*} gives
\begin{equation}\label{thetaepsweak}
 \begin{aligned}
\mez\int_0^T\!\!\int_\T\vp(x,t)\tt_n(x,t)dxdt-I_1^n
=-\kappa\int_0^T\!\!\int_\T\vp(x,t)\p_xf_n(x,t)dxdt,
\end{aligned}
\end{equation} 	
where
$$
I_1^n=\int_0^T\!\!\int_\T\int_{\T}\p_x\vp(x,t)\frac{1}{2\pi}\arctan\Big(\frac{\tanh(\frac{f_n(x,t)-f_n(x',t)}{2})}{\tan(\frac{x-x'}{2})}\Big)\tt_n(x',t)dx'dxdt.
$$
The first and the last integrals in \eqref{thetaepsweak} converge to their corresponding limit in view of the weak* convergences in \eqref{weak*}. We show now that
$$
I^n_1\to I_1=\int_0^T\!\!\int_\T\int_{\T}\p_x\vp(x,t)\frac{1}{2\pi}\arctan\Big(\frac{\tanh(\frac{f(x,t)-f(x',t)}{2})}{\tan(\frac{x-x'}{2})}\Big)\tt_n(x',t)dx'dxdt\quad \mbox{as}\quad n\to\infty.
$$
We split $I_1^n-I_{1}=D_1^n+D_2^n$ where
\begin{align*}
\begin{split}
D_1^n=&\int_0^T\!\!\int_\T\tt_n(x',t)\int_{\T\cap\{|x|>\delta\}}\p_x\vp(x\!+\!x',t)\frac{1}{2\pi}\arctan\Big(\frac{\tanh(\frac{f_n(x+x',t)-f_n(x',t)}{2})}{\tan(\frac{x}{2})}\Big)dxdx'dt\\
&-\int_0^T\!\!\int_\T\theta(x',t)\int_{\T\cap\{|x|>\delta\}}\p_x\vp(x\!+\!x',t)\frac{1}{2\pi}\arctan\Big(\frac{\tanh(\frac{f(x+x',t)-f(x',t)}{2})}{\tan(\frac{x}{2})}\Big)dxdx'dt
\end{split}
\end{align*}
and
\begin{align*}
\begin{split}	
	D_2^n=&\int_0^T\!\!\int_\T\tt_n(x',t)\int_{\T\cap\{|x|<\delta\}}\p_x\vp(x\!+\!x',t)\frac{1}{2\pi}\arctan\Big(\frac{\tanh(\frac{f_n(x+x',t)-f_n(x',t)}{2})}{\tan(\frac{x}{2})}\Big)dxdx'dt\\
	&-\int_0^T\!\!\int_\T\theta(x',t)\int_{\T\cap\{|x|<\delta\}}\p_x\vp(x\!+\!x',t)\frac{1}{2\pi}\arctan\Big(\frac{\tanh(\frac{f(x+x',t)-f(x',t)}{2})}{\tan(\frac{x}{2})}\Big)dxdx'dt.
\end{split}
\end{align*}
From the  uniform bound \eqref{ub:tt} and the obvious inequality $|\arctan(\cdot)|\le \frac{\pi}{2}$, it is readily seen that
$$|D_2^n|\leq \delta C\|\p_x\vp\|_{L^\infty} (1+\| f_0\|_{\Lip(\T)})^{\frac72}.$$
The strong convergence \eqref{strong:cv} of $f_n$ combined with the weak* convergence of $\theta_n$ implies that for any $\delta\in (0, 1)$,
$$
\lim_{n\to \infty}|D_1^{n}|=0.
$$
Thus by sending $n\to \infty$ and subsequently $\delta\to 0$, we conclude that $I_1^n\to I_1$. Consequently,
\begin{align*}
\mez \int_0^T\int_\T \tt(x, t) \vp(x, t) dxdt&-p.v.\frac{1}{2\pi}\int_0^T\int_\T\p_{x}\vp(x, t) \int_{\T}\arctan\Big(\frac{\tanh(\frac{f(x, t)-f(x', t)}{2})}{\tan(\frac{x-x'}{2})}\Big)\tt(x', t)dx'\\
&\qquad=-\ka \int_0^T\int_\T\vp(x, t)\p_xf(x, t)dxdt.
\end{align*}
For $f\in W^{1, \infty}(\T)$ and $\tt\in L^2(\T)$, we have  $K^*[f](\tt)\in L^2(\T)$ and hence integrating by parts in the second integral yields \eqref{reform:tt} in $L^\infty_t L^2_x$.

{\bf Step 4: Equation \eqref{reform:f}}.  We proceed analogously for the $f_n$ equation in \eqref{Muskat:rGE}. We multiply \eqref{Muskat:rGE} by $\vp\in C^\infty(\T\times [0, T))$ and use  \eqref{DN:contour1} to integrate by parts, leading to
\begin{align}
\begin{split}\label{fepsweak}
\int_0^T\!\!\!\int_\T\p_t\vp(x,t)f_n(x,t)dxdt+\int_\T\vp(x,0)(\Gamma_{\eps_n}&*f_0)(x)dx=I_2^{n}\\
&-\eps_n\int_0^T\!\!\!\int_\T\p_x\vp(x,t)\p_xf_n(x,t)dxdt,
\end{split}
\end{align}
where
$$
I_2^{n}=\frac{1}{4\pi}\int_0^T\!\!\!\int_\T
\p_x\vp(x,t)\int_\T\ln\Big(\cosh(f_n(x,t)\!-\!f_n(x',t))\!-\!\cos(x\!-\!x')\Big)\tt_n(x',t)dx'dxdt.
$$
By \eqref{weak*}, the first integral in \eqref{fepsweak} converges to the same integral with $f$ in place of $f_n$. Since $\Gamma_{\eps_n}*f_0\to f_0$ in $C(\T)$, the second integral in \eqref{fepsweak} converges to $\int_\T\vp(x,0) f_0(x)dx$. The last integral tends to zero owing to the uniform Lipschitz bound  \eqref{ub:f}. Thus it remains to analyze the nonlinear term $I_2^{n}$. Using  identity \eqref{trigidentity}, Fubini theorem and the change of variable in $x\mapsto x+x'$, we rewrite $I_2^n$ as
$$
I_2^{n}=\frac{1}{4\pi}\int_0^T\!\!\!\int_\T \tt_{n}(x',t)
\int_\T\p_x\vp(x\!+\!x',t)\ln\Big(2\sinh^2\Big(\frac{f_n(x\!+\!x',t)\!-\!f_n(x',t)}{2}\Big)\!+2\!\sin^2\big(\frac{x}2\big)\Big)dxdx'dt.
$$
Then we  split $I_2^{n}=J^{n}_1+J^{n}_2$ where
$$
J^{n}_1=\frac{1}{4\pi}\int_0^T\!\!\!\int_\T\tt_{n}(x',t)
\int_\T\p_x\vp(x\!+\!x',t)\ln\Big(1+\frac{\sinh^2\big(\frac{f_n(x+x',t)-f_n(x',t)}{2}\big)}{\sin^2(\frac{x}{2})}\Big)dxdx'dt
$$
and
\begin{align*}
J^{n}_2&=\frac{1}{4\pi}\int_0^T\!\!\!\int_\T\tt_{n}(x',t)
\int_\T\p_x\vp(x\!+\!x',t)\ln\big(2\sin^2(\frac{x}{2})\big)dxdx'dt\\
&=-\frac{1}{4\pi}\int_0^T\!\!\!\int_\T\tt_{n}(x',t)
\int_\T\vp(x\!+\!x',t)\cot (\frac{x}{2})dxdx'dt\\
&=\mez\int_0^T\!\!\!\int_\T
\tt_{n}(x',t)H(\vp)(x',t)dx'dt.
\end{align*}
Here $H$ denotes the Hilbert transform  on $\T$. The weak* convergence of $\tt_n$ in \eqref{weak*} implies the convergence for $J^n_2$. The estimate
$$
\left|\frac{\sinh\big(\frac{f_n(x+x',t)-f_n(x',t)}{2}\big)}{\sin(\frac{x}{2})}\right|\leq \frac{\pi}{2}\|f'_0\|_{L^\infty}\cosh(\|f_0\|_{L^\infty})
$$
allows to bound the $\ln$ function in $J_1^{n}$. Then the convergence for $J_1^n$ follows along the same lines as for $I_1^{n}$. Therefore, taking $n\to\infty$ in \eqref{fepsweak} yields
\begin{align*}
\begin{split}
\int_0^T\!\!\!\int_\T\p_t\vp(x,t)&f(x,t)dxdt+\int_\T\vp(x,0)f_0(x)dx=\\
&\frac{1}{4\pi}\int_0^T\!\!\!\int_\T
\p_x\vp(x,t)\int_\T\ln\Big(\cosh(f(x,t)\!-\!f(x',t))\!-\!\cos(x\!-\!x')\Big)\tt(x',t)dx'dxdt.
\end{split}
\end{align*}
Finally, the regularity of $f$ and $\theta$ together with integration by parts provide the evolution equation \eqref{reform:f}  for $f$  in the $L^\infty_tL^2_x$ sense. The proof of Theorem \ref{theo:main} is complete.
\appendix
\section{Traces for homogeneous Sobolev spaces}
Let $f\in \Lip(\T^d)$ and recall the notation $\Omega_f=\{(x, y)\in \T^d\times \Rr: y<f(x)\}$. Let $\dot H^1(\Omega_f)$ denote the homogeneous Sobolev space defined by \eqref{def:dotH1}. We prove the trace and lifting theorems for functions in $\dot H^1(\Omega_f)$.
\begin{theo}\label{theo:trace}
There exists a unique bounded linear trace operator $\Tr: \dot H^1(\Omega_f)\to \dot H^\mez(\T^d)$ such that the following hold.
\begin{itemize}
\item[(1)] $\Tr(u)(x)=u(x, f(x))$ for all $u\in \dot H^1(\Omega_f)\cap C(\overline{\Omega_f})$.
\item[(2)] For some constant $C=C(d)$,
\bq\label{trace:ineq}
\| \Tr(u)\|_{\dot H^\mez(\T^d)}\le C(1+\| f\|_{\Lip(\T^d)})\| u\|_{\dot H^1(\Omega_f)}.
\eq
\end{itemize}
\end{theo}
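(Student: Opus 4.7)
The plan is to flatten $\Omega_f$ via the bi-Lipschitz shear $\Phi:\T^d\times\Rr_-\to \Omega_f$, $\Phi(x,z)=(x,z+f(x))$, and thereby reduce the trace theorem to the standard one on the flat half-space. Since $f\in \Lip(\T^d)$, the map $\Phi$ has Jacobian $\det D\Phi\equiv 1$, and its inverse $\Phi^{-1}(x,y)=(x,y-f(x))$ is Lipschitz with constant controlled by $\|f\|_{\Lip(\T^d)}$.

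The first step is to establish the trace inequality on the flat half-space: for every $v\in \dot H^1(\T^d\times \Rr_-)$,
\bq\label{plan:flat}
\|v(\cdot,0)\|_{\dot H^\mez(\T^d)}^2 \le \|\na_{x,z} v\|_{L^2(\T^d\times\Rr_-)}^2.
\eq
Expanding $v(x,z)=\sum_{k\in\Zz^d}\hat v(k,z)e^{ik\cdot x}$, the finiteness of $\|\na v\|_{L^2}$ implies $\sum_k\int_{-\infty}^0(|k|^2|\hat v|^2+|\p_z\hat v|^2)dz<\infty$, which forces $\hat v(k,z)\to 0$ as $z\to-\infty$ for each $k\ne 0$. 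Writing
\[
|\hat v(k,0)|^2 \;=\; 2\RE\int_{-\infty}^0 \overline{\hat v}(k,z)\,\p_z\hat v(k,z)\,dz
\]
and then applying Cauchy--Schwarz and Young's inequality gives $|k|\,|\hat v(k,0)|^2 \le \int_{-\infty}^0(|k|^2|\hat v|^2 + |\p_z\hat v|^2)dz$; summing over $k\ne 0$ yields \eqref{plan:flat}.

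Next, for $u\in \dot H^1(\Omega_f)$, I set $v:=u\circ\Phi$, which lies in $\dot H^1(\T^d\times\Rr_-)$ modulo constants. The chain rule for Sobolev functions under bi-Lipschitz maps gives $\p_{x_i} v=(\p_{x_i}u)\circ\Phi+\p_{x_i} f\cdot (\p_y u)\circ\Phi$ and $\p_z v=(\p_y u)\circ\Phi$; using $\det D\Phi=1$ to transfer $L^2$ norms then delivers
\bq\label{plan:chain}
\|\na_{x,z} v\|_{L^2(\T^d\times\Rr_-)} \le C(1+\|f\|_{\Lip(\T^d)})\|\na_{x,y} u\|_{L^2(\Omega_f)}.
\eq
I would define $\Tr(u):=v(\cdot,0)\in \dot H^\mez(\T^d)$; combining \eqref{plan:flat} with \eqref{plan:chain} yields the quantitative bound \eqref{trace:ineq}. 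Condition (1) is immediate: if $u\in \dot H^1(\Omega_f)\cap C(\overline{\Omega_f})$, then $v=u\circ\Phi$ extends continuously to $z=0$ with $v(x,0)=u(x,f(x))$, so $\Tr(u)(x)=u(x,f(x))$.

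Uniqueness of the extension follows from continuity of $\Tr$ together with density: one verifies that $\dot H^1(\Omega_f)\cap C(\overline{\Omega_f})$ is dense in $\dot H^1(\Omega_f)$ by pulling back to $\T^d\times\Rr_-$ via $\Phi$, reflecting across $z=0$, and mollifying in both variables, then pushing forward by $\Phi$. I expect the main technical point to be handling the quotient structure: $\dot H^1$ is defined modulo constants, but $\dot H^\mez(\T^d)$ is likewise invariant under constant shifts (since its norm ignores the zero Fourier mode), so $\Tr$ descends to a well-defined continuous operator between the quotient spaces. A secondary subtlety is that the shear $\Phi$ distorts the inward normal by a factor depending on $\|f\|_{\Lip}$, which is precisely what produces the $(1+\|f\|_{\Lip(\T^d)})$ factor in \eqref{trace:ineq} and which is inherited from the chain rule estimate rather than from the flat trace inequality.
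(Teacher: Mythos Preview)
Your proposal is correct and follows essentially the same strategy as the paper: reduce to the flat half-space via the shear $\Phi(x,z)=(x,z+f(x))$, prove the trace inequality there by Fourier analysis, and transfer back using the chain rule bound $\|\na v\|_{L^2}\le C(1+\|f\|_{\Lip})\|\na u\|_{L^2}$. The only tactical difference is in the flat-case argument: you take the Fourier transform in $x$ only and use the fundamental theorem of calculus in $z$ (writing $|\hat v(k,0)|^2=2\RE\int_{-\infty}^0\overline{\hat v}\,\p_z\hat v\,dz$), whereas the paper extends $u$ by even reflection across $\{z=0\}$ and Fourier transforms in both variables, bounding $|\hat{\Tr(u)}(k)|^2$ via Cauchy--Schwarz against $\int_{\Rr}(|k|^2+\xi^2)^{-1}d\xi=\pi/|k|$. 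Both yield the same inequality; your version avoids the reflection step but requires the observation that $\hat v(k,\cdot)\in H^1(\Rr_-)$ vanishes at $-\infty$, while the paper's version sidesteps this by working with compactly supported $u$ from the outset via density.
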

\begin{proof}
By the density of $C^\infty_c(\overline{\Omega_f})$ in $\dot H^1(\Omega_f)$, it suffices to prove the estimate \eqref{trace:ineq} for $u\in C^\infty_c(\overline{\Omega_f})$.

{\it Step 1:} $f=0$, i.e. $\Omega_f=\Omega_0=\T^d\times \Rr_-$. We extend $u$ to $\T^d\times \Rr$ by even reflection. That is, $u(x, y)=u(x, -y)$ for $y>0$. Clearly, $\| u\|_{\dot H^1(\T^d\times \Rr)}\le 2\| u\|_{\dot H^1(\T^d\times \Rr_-)}$. We have
$\Tr(u)(x)=u(x, 0)$ and
\[
\wh{\Tr(u)}(k)=\frac{1}{2\pi}\int_{\Rr}\wh{u}(k, \xi)d\xi,\quad \wh{u}(k, \xi)=\int_{\T^d\times \Rr}u(x, y)e^{-ixk-iy\xi}dydx.
\]
By the Cauchy-Schwartz inequality, for $k\in \Zz^d\setminus\{0\}$,
\[
\left|\int_{\Rr}\wh{u}(k, \xi)d\xi\right|^2\le \int_{\Rr}(k^2+\xi^2)|\wh{u}(k, \xi)|^2d\xi\int_{\Rr}\frac{1}{k^2+\xi^2}d\xi\le \frac{\pi}{|k|}\int_{\Rr}(k^2+\xi^2)|\wh{u}(k, \xi)|^2d\xi,
\]
so
\begin{align*}
\| \Tr(u)\|^2_{\dot H^\mez(\T^d)}&= \sum_{k\in \Zz^d\setminus\{0\}}|k||\wh{\Tr(u)}(k)|^2\le\frac{1}{4\pi}  \sum_{k\in \Zz^d}\int_{\Rr}(k^2+\xi^2)|\wh{u}(k, \xi)|^2d\xi\\
&=C(d)\| \na_{x, y}u\|_{L^2(\T^d\times \Rr)}^2\le 2C(d)\| u\|_{\dot H^1(\T^d\times \Rr_-)}^2.
\end{align*}
{\it Step 2:} $f\in \Lip(\T^d)$.  Setting $v(x, z)=u(x, z+f(x))$ for $(x, z)\in \T^d\times \Rr_-$, we have $\Tr(u)(x)=v(x, 0)$ and $v\in \dot H^1(\T^d\times \Rr_-)$ with
\[
\| v\|_{\dot H^1(\T^d\times \Rr_-)}\le (1+\| f\|_{\Lip(\T^d)})\| u\|_{\dot H^1(\Omega_f)}.
\]
Using Step 1 we deduce that
\[
\| \Tr(u)\|_{\dot H^\mez(\T^d)}=\| v(\cdot, 0)\|_{\dot H^\mez(\T^d)}\le C(d)\| v\|_{H^1(\T^d\times \Rr_-)}\le C(d)(1+\| f\|_{\Lip(\T^d)})\| u\|_{\dot H^1(\Omega_f)},
\]
thereby finishing the proof.
\end{proof}
\begin{theo}\label{theo:lifting}
For every $g\in \dot H^\mez(\T^d)$, there exists $u\in \dot H^1(\Omega_f)$ such that $\Tr(u)=g$ and for some universal constant $C=C(d)$,
\bq
\| u\|_{\dot H^1(\Omega_f)}\le C(1+\| f\|_{\Lip(\T^d)})\| g\|_{\dot H^\mez(\T^d)}.
\eq
Moreover, we can choose $u(x, y)=u_\sharp(x, y-f(x))$, where $u_\sharp(x, Y)=e^{Y|D_x|}g(x)$ for $(x, Y)\in \T^d\times \Rr_-$.
\end{theo}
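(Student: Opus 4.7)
The plan is to first construct an explicit lifting in the flat half space $\T^d\times \Rr_-$ via the Poisson kernel, and then pull it back to $\Omega_f$ by the Lipschitz change of variables $Y=y-f(x)$.

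\textbf{Step 1 (flat case).} I would verify that the candidate $u_\sharp(x,Y)=e^{Y|D_x|}g(x)$, defined for $(x,Y)\in \T^d\times \Rr_-$ via the Fourier series
\[
u_\sharp(x,Y)=\sum_{k\in \Zz^d\setminus\{0\}}\wh g(k)\,e^{Y|k|}e^{ik\cdot x}
\]
(and constant zero mode, which is irrelevant modulo $\Rr$) belongs to $\dot H^1(\T^d\times \Rr_-)$ and satisfies $\Tr(u_\sharp)=g$ in the sense of Theorem \ref{theo:trace}. A direct Parseval computation gives
\[
\|\nabla_x u_\sharp\|_{L^2}^2=\|\partial_Y u_\sharp\|_{L^2}^2=\sum_{k\ne 0}|k|^2|\wh g(k)|^2\!\!\int_{-\infty}^{0}\!\!e^{2Y|k|}\,dY=\tfrac{1}{2}\sum_{k\ne 0}|k|\,|\wh g(k)|^2,
\]
so $\|u_\sharp\|_{\dot H^1(\T^d\times \Rr_-)}\le C(d)\|g\|_{\dot H^\mez(\T^d)}$. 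The trace identity $u_\sharp(\cdot,0)=g$ is then standard (truncate the Fourier series, use the continuity and density statements from Theorem \ref{theo:trace}, and pass to the limit).

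\textbf{Step 2 (pull back to $\Omega_f$).} Define $u(x,y)=u_\sharp(x,y-f(x))$ on $\Omega_f$. Since $f\in \Lip(\T^d)$, the map $\Phi(x,y)=(x,y-f(x))$ is a bi-Lipschitz diffeomorphism of $\Omega_f$ onto $\T^d\times \Rr_-$ with Jacobian identically equal to $1$. A pointwise chain-rule computation a.e.\ yields
\[
\nabla_x u=(\nabla_x u_\sharp)\circ \Phi-\big((\partial_Y u_\sharp)\circ \Phi\big)\nabla_x f,\qquad \partial_y u=(\partial_Y u_\sharp)\circ \Phi,
\]
and Young's inequality gives
\[
|\nabla_{x,y}u(x,y)|^2\le 2\big(1+|\nabla_x f(x)|^2\big)\,|\nabla_{x,Y}u_\sharp(\Phi(x,y))|^2.
\]
Integrating over $\Omega_f$ and changing variables via $\Phi$ yields
\[
\|u\|_{\dot H^1(\Omega_f)}^2\le 2(1+\|f\|_{\Lip(\T^d)}^2)\|u_\sharp\|_{\dot H^1(\T^d\times \Rr_-)}^2\le C(d)(1+\|f\|_{\Lip(\T^d)})^2\|g\|_{\dot H^\mez(\T^d)}^2,
\]
which is the desired bound.

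\textbf{Step 3 (trace identification).} By the uniqueness of the trace operator in Theorem \ref{theo:trace}, it suffices to check $\Tr(u)=g$ on a dense subclass: when $g\in C^\infty(\T^d)$, the function $u_\sharp$ extends continuously up to $\{Y=0\}$, hence $u$ extends continuously up to $\Sigma$, and $u(x,f(x))=u_\sharp(x,0)=g(x)$ in the classical sense. The general case follows by approximation and continuity of $\Tr$.

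\textbf{Main obstacle.} None of the steps is serious in isolation; the only point that requires care is the compatibility of the Fourier-defined extension $u_\sharp$ with the abstract trace operator constructed in Theorem \ref{theo:trace} (in particular, handling the quotient by $\Rr$ so that the zero Fourier mode of $g$ is irrelevant), and verifying the chain rule almost everywhere for $u_\sharp$ composed with a Lipschitz graph change of variables. Both are resolved by a standard approximation of $f$ by smooth graphs (or of $g$ by smooth functions) and passing to the limit using the quantitative bounds already established in Step 2.
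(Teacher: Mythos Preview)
Your proposal is correct and follows essentially the same approach as the paper: construct the Poisson extension $u_\sharp(x,Y)=e^{Y|D_x|}g(x)$ on the flat half-space, verify via Plancherel that $\|u_\sharp\|_{\dot H^1(\T^d\times\Rr_-)}\le C(d)\|g\|_{\dot H^{1/2}}$, and then pull back to $\Omega_f$ by the Lipschitz change of variables $Y=y-f(x)$ to pick up the factor $(1+\|f\|_{\Lip})$. Your Steps~1--3 simply make explicit (with the chain rule, Young's inequality, and the density argument for the trace) what the paper compresses into two lines.
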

\begin{proof}
Clearly, with the given function $u$ we have $u(x, f(x))=u_\sharp (x, 0)=g(x)$.
Since $\p_{x_j} u_\sharp(x, Y)=e^{Y|D_x|}\p_{x_j}g(x)$ and $\p_Y u_\sharp(x, Y)=e^{Y|D_x|}|D_x|g(x)$, using the Plancherel Theorem for the $x$ variable  one can easily prove that
\[
\sum_{j=1}^d\| \p_{x_j} u_\sharp\|^2_{L^2(\T^d\times \Rr_-)}= C_1(d)\| g\|_{\dot H^\mez(\T^d)}^2,\quad \| \p_Y u_\sharp\|_{L^2(\T^d\times \Rr_-)}=C_2(d)\| g\|_{\dot H^\mez(\T^d)}.
\]
 Consequently,
 \[
 \| u\|_{\dot H^1(\Omega_f)}\le  2(1+\| f\|_{\Lip(\T^d)})\| u_\sharp\|_{\dot H^1(\T^d\times \Rr_-)}\le C(d)(1+\| f\|_{\Lip(\T^d)})\| g\|_{\dot H^\mez(\T^d)}
 \]
 which finishes the proof.
\end{proof}
\section{The Dirichlet-Neumann operator for the disk}
\begin{prop}\label{prop:disk}
Let $g\in C(\T)$ be  $C^{1, \alpha}$ at $x\in \T$. Let $u\in C(\ol{B_1})$ be the unique solution of the Dirichlet problem
\[
\Delta u=0\quad\text{in~} B_1(0),\quad u\vert_{\p B_1(0)}=g,
\]
where  $e^{ix}\equiv (\cos x, \sin x)$.  Then the normal derivative
\[
\p_n u(e^{ix}):=\lim_{r\to 1^-} e^{ix}\cdot \na u(re^{ix})
\]
is well-defined and
\bq\label{normal:disk}
\p_n u(e^{ix})=-\frac{1}{8\pi}\int_{-\pi}^{\pi}\frac{g(x+x')+g(x-x')-2g(x)}{\sin^2(\frac{x'}{2})}dx'.
\eq
\end{prop}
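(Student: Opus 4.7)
My plan is to exploit the explicit Poisson integral representation of $u$ in the disk, differentiate radially inside the integral, and then pass to the limit $r\to 1^-$ by dominated convergence, using the local $C^{1,\alpha}$ regularity of $g$ at $x$ to control the singular kernel.

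First I would write the Poisson formula
\[
u(re^{i\theta})=\frac{1}{2\pi}\int_{-\pi}^{\pi}P_r(\theta-\phi)g(\phi)d\phi,\qquad P_r(\psi)=\frac{1-r^2}{1-2r\cos\psi+r^2},
\]
and note that $e^{ix}\cdot\na=\p_r$ along the ray $\{re^{ix}:r\in[0,1)\}$. A direct computation yields
\[
\p_rP_r(\psi)=\frac{2(1-r)^2-4(1+r^2)\sin^2(\psi/2)}{\big((1-r)^2+4r\sin^2(\psi/2)\big)^{2}},
\]
with pointwise limit $\p_rP_r(\psi)\to -1/(2\sin^2(\psi/2))$ as $r\to 1^-$ for each $\psi\ne 0$. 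Differentiating under the integral sign then gives
\[
e^{ix}\cdot\na u(re^{ix})=\frac{1}{2\pi}\int_{-\pi}^{\pi}(\p_rP_r)(x-\phi)g(\phi)d\phi.
\]

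Next I would use the normalization $\int_{-\pi}^{\pi}P_r(\psi)d\psi=2\pi$; differentiating in $r$ shows $\int_{-\pi}^{\pi}(\p_rP_r)(\psi)d\psi=0$ for every $r\in[0,1)$. Subtracting $g(x)$ from the integrand at no cost and then exploiting the evenness of $\p_rP_r$ in $\psi$ via the change of variables $\psi\to-\psi$ to symmetrize, I obtain
\[
e^{ix}\cdot\na u(re^{ix})=\frac{1}{4\pi}\int_{-\pi}^{\pi}(\p_rP_r)(x')\big[g(x+x')+g(x-x')-2g(x)\big]dx'.
\]
The symmetrization is crucial: it replaces the first-order quantity $g(\phi)-g(x)=O(|\phi-x|)$ by the second-order quantity $g(x+x')+g(x-x')-2g(x)=O(|x'|^{1+\alpha})$ near $x'=0$, which exactly compensates the $1/\sin^2(x'/2)$ blow-up of the limiting kernel.

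The main analytic step, and the only place the $C^{1,\alpha}$ hypothesis is used, is to apply dominated convergence to this identity as $r\to 1^-$. From the closed-form formula above together with the elementary lower bound $(1-r)^2+4r\sin^2(\psi/2)\ge c\sin^2(\psi/2)$ valid for $r\in[\mez,1)$, one obtains the uniform kernel estimate $|\p_rP_r(x')|\le C/\sin^2(x'/2)$ on $r\in[\mez,1)$ and $x'\in[-\pi,\pi]\setminus\{0\}$. Combined with the $C^{1,\alpha}$ assumption, which gives $|g(x+x')+g(x-x')-2g(x)|\le M|x'|^{1+\alpha}$ for $|x'|<\gamma$, and the trivial global bound $4\|g\|_{L^\infty}$ for $|x'|\ge\gamma$, the integrand is dominated by a fixed $L^1$ function. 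Passing to the limit then yields precisely \eqref{normal:disk}. I do not anticipate a serious obstacle here: the only point requiring care is verifying the uniform kernel bound and its integrability against $|x'|^{1+\alpha}$, both of which follow mechanically from the closed-form expression for $\p_rP_r$.
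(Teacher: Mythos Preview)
Your proof is correct and follows the same underlying strategy as the paper: Poisson representation, use of $\int \p_rP_r=0$ to subtract $g(x)$, symmetrization to gain the second-order difference, and passage to the limit using the $C^{1,\alpha}$ hypothesis to make the kernel integrable. The paper carries this out slightly differently, splitting $z\cdot\nabla_z K$ into two pieces $J_1$ and $J_2$ (corresponding to the two terms in your numerator $2(1-r)^2-4(1+r^2)\sin^2(\psi/2)$), showing $J_2\to 0$ by direct estimates, and then handling the remaining difference between $J_1$ and the target integral by a second hand-computed limit; your single dominated-convergence argument on the full kernel is cleaner. One minor point: the lower bound $(1-r)^2+4r\sin^2(\psi/2)\ge c\sin^2(\psi/2)$ alone is not enough to control the term $2(1-r)^2/D^2$; you also need $D\ge (1-r)^2$, so that $D^2\ge 2(1-r)^2\sin^2(\psi/2)$ and hence $2(1-r)^2/D^2\le 1/\sin^2(\psi/2)$. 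With that small addition your uniform bound $|\p_rP_r|\le C/\sin^2(\psi/2)$ is justified and the argument goes through.
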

\begin{proof}
Since $g$ is $C^{1, \alpha}$ at $x$, there exist $M>0$ and $\gamma>0$ such that
\bq\label{def:Holder}
|g(x+x')-g(x)-\p g(x)x'|\le M|x'|^{1+\alpha}\quad\forall |x'|<\gamma.
\eq
Since $g\in C(\T)$, $u$ is given by the Poisson formula
 \[
 u(z)=\int_{\p B_1(0)}K(z, z')g(z')dS(z'),\quad
 K(z, z')=\frac{1-|z|^2}{2\pi}\frac{1}{|z-z'|^2}
 \]
 for $z\in B_1(0)$.  Consequently,
 \bq\label{normal:diskb}
 z\cdot\na u(z)=\int_{\p B_1(0)}z\cdot\na_zK(z, z')g(z')dS(z'),
 \eq
 where
 \bq\label{dK}
 z\cdot\na_zK(z, z')=-\frac{|z|^2}{\pi |z-z'|^2}-(1-|z|^2)\frac{z\cdot(z-z')}{\pi |z-z'|^4}.
 \eq
When $g\equiv 1$ we have $u\equiv 1$ and thus \eqref{normal:diskb} yields
 \bq\label{intdK}
 \int_{\p B_1(0)}z\cdot\na_zK(z, z')dS(z')=0,\quad z\in B_1(0).
 \eq
Denoting $z_*=e^{ix}$ and using \eqref{intdK}, we write
  \bq\label{znabu}
  \begin{aligned}
   z\cdot\na u(z)&=\int_{\p B_1(0)}z\cdot\na_zK(z, z')[g(z')-g(z_*)]dS(z')\\
   &=\frac{|z|^2}{\pi}\int_{\p B_1(0)}\frac{1}{|z-z'|^2}[g(z_*)-g(z')]dS(z')\\
   &\qquad+\frac{1-|z|^2}{\pi}\int_{\p B_1(0)}\frac{z\cdot(z-z')}{|z-z'|^4}[g(z_*)-g(z')]dS(z')\\
   &=J_1+J_2,\quad z\in B_1(0).
   \end{aligned}
   \eq
 {\bf Step 1.} In this step we shall prove that
   \bq\label{claim:J2}
   \lim_{r\to 1^{-}}J_2=0.
   \eq
  Indeed,  we first use the equality $r^2+1-2r\cos x=(r-\cos x)^2+\sin^2 x$ to have
  \[
  \begin{aligned}
  J_2&=\frac{1-r^2}{\pi}\int_{-\pi}^{\pi}\frac{r^2-r\cos(x-x')}{(r^2+1-2r\cos(x-x'))^2}[g(x)-g(x')]dx'\\
  &= \frac{(1-r^2)r}{2\pi}\int_{-\pi}^{\pi}\frac{r-1+2\sin^2\frac{x'}{2}}{[(r-\cos x')^2+\sin^2 x']^2}[2g(x)-g(x+x')-g(x-x')]dx'.
  \end{aligned}
  \]
In the integral defining $J_2$, it suffices to consider $x'$ near  $0$  because away from $0$ the integrand is uniformly bounded as $r\to 1$.  For  sufficiently small $a\in (0, \gamma)$, let  $J_2^0$ be the contribution of $|x'|<a$. In view of \eqref{def:Holder}, we have
\begin{align*}
  |J_2^0|&\le  \frac{(1-r^2)r}{2\pi}\int_{-a}^a\frac{(1-r)M|x'|^{1+\alpha}}{[(r-\cos x')^2+\sin^2 x']^2}dx'\\
  &\quad + \frac{(1-r^2)r}{2\pi}\int_{-a}^a\frac{2(\sin^2\frac{x'}{2})M|x'|^{1+\alpha}}{[(r-\cos x')^2+\sin^2 x']^2}dx':={\rm I}+{\rm II}.
\end{align*}
  Using the inequalities
  \[
  |\sin x'|\le |x'|\quad\forall x'\quad\text{and}\quad |\sin x'|\ge c|x'|\quad\forall x'\in (-a, a)
  \]
 we estimate
  \[
  |{\rm II}|\le \frac{(1-r^2)r}{2\pi}\int_{-a}^a\frac{2(\sin^2\frac{x'}{2})M|x'|^{1+\alpha}}{\sin^4 x'}dx'\le \frac{(1-r^2)rM}{4\pi c^4}\int_{-a}^a\frac{1}{|x'|^{1-\alpha}}dx'\le \frac{(1-r^2)rMa^\alpha}{2\pi\alpha c^4}.
  \]
  Hence, ${\rm II}\to 0$ as $r\to 1^{{\color{blue}-}}$.  On the other hand,
  \bq\label{est:J2:I}
  \begin{aligned}
  |{\rm I}|&\le  \frac{(1-r^2)r}{2\pi} \int_{-a}^a\frac{(1-r)M|x'|^{1+\alpha}}{[(r-\cos x')^2+\sin^2 x']\sin^2 x'}dx'\\
  &\le   \frac{(1-r^2)r}{2\pi}\frac{M}{c^2}\int_{-a}^a\frac{1-r}{[(r-\cos x')^2+\sin^2 x']|x'|^{1-\alpha}}dx'.
  \end{aligned}
  \eq
  Writing $(r-\cos x')^2+\sin^2 x'=(1-r)^2+4r\sin^2 \frac{x'}{2}$, we deduce that
  \[
  |{\rm I}|\le\frac{(1-r^2)r}{2\pi}  \frac{M}{c^2}\int_{-a}^a\frac{1-r}{[(1-r)^2+rc^2|x'|^2]|x'|^{1-\alpha}}dx'\le C(1-r)^\alpha\frac{M(1+r)r}{2c^2(\sqrt{r}c)^\alpha}.
   \]
   This concludes the proof of \eqref{claim:J2}.

   {\bf Step 2.} Combining \eqref{znabu} and \eqref{claim:J2} yields
   \[
   \begin{aligned}
   \p_nu(e^{ix})= \lim_{r\to 1^{-}}z\cdot\na u(z)&=\lim_{r\to 1^{-}} J_1(z)= \lim_{r\to 1^{-}}\frac{1}{\pi}\int_{\p B_1(0)}\frac{1}{|z-z'|^2}[g(z)-g(z')]dS(z')\\
    &= \lim_{r\to 1^{-}}\frac{1}{2\pi}\int_{-\pi}^{\pi}\frac{2g(x)-g(x+x')-g(x-x')}{(r-\cos x')^2+\sin^2 x'}dx'.
   \end{aligned}
\]
In order to obtain \eqref{normal:disk} we compute
\[
\begin{aligned}
L:&=\int_{-\pi}^{\pi}\frac{2g(x)-g(x+x')-g(x-x')}{(r-\cos x')^2+\sin^2 x'}dx'-\frac{1}{4}\int_0^{2\pi}\frac{2g(x)-g(x+x')-g(x-x')}{\sin^2 \frac{x'}{2}}dx'\\
&=\frac{1-r}{4}\int_{-\pi}^{\pi}\frac{2g(x)-g(x+x')-g(x-x')}{[(r-\cos x')^2+\sin^2 x']\sin^2 \frac{x'}{2}}(1+r-2\cos x')dx'.
\end{aligned}
\]
 As before it suffices to consider  $|x'|<a$ for small $a$. Writing $1+r-2\cos x'=(r-1)+4\sin^2(\frac{x'}{2})$ we split
\[
\begin{aligned}
L&=(1-r)\int_{-a}^a\frac{2g(x)-g(x+x')-g(x-x')}{(r-\cos x')^2+\sin^2 x'}dx'\\
&\quad+\frac{1-r}{4}\int_{-a}^a\frac{2g(x)-g(x+x')-g(x-x')}{[(r-\cos x')^2+\sin^2 x']\sin^2 \frac{x'}{2}}(r-1)dx':=L_1+L_2.
\end{aligned}
\]
Clearly,
\[
\begin{aligned}
|L_1|&\le  (1-r)M\int_{-a}^a\frac{|x'|^{1+\alpha}}{\sin^2 x'}dx'\le (1-r)\frac{2Ma^\alpha}{c^2\alpha}.
\end{aligned}
\]
On the other hand, $L_2$ can be treated as in \eqref{est:J2:I}. The proof is complete.
\end{proof}
 \section{Construction of \texorpdfstring{$\psi_r$}{}}\label{appendix:psir}
 Let  $f\in W^{1, \infty}(\T\times (0, T))$ be $C^{1, 1}$ at $X_0=(x_0, t_0)\in \T\times (0, T)$. Recall the definitions  \eqref{parabola:psibar} and \eqref{parabola:psi} of the parabolas $\overline{\psi}$ and $\psi$ tangent to the graph $\{y=f(x, t)\}$ from above at $(x_0, f(x_0, t_0))$.   We construct a family of functions $\psi_r\in  C^\infty(\T\times \Rr)$, $r\in (0, r_0)$  satisfying the properties in \eqref{properties:psir}. Assume without loss of generality that $f(X_0)=0$ so that
 \[
 \overline{\psi}(X)=\na_Xf(X_0)(X-X_0)+\frac{C}{2}|X-X_0|^2,\quad \psi(X)=\na_Xf(X_0)(X-X_0)+C|X-X_0|^2
 \]
and  $f(X)\le \overline{\psi}(X)$ for $|X-X_0|<r_0<\frac{1}{10}\min\{\pi, t_0, T-t_0\}$. We extend $f$ to $0$ outside $(0, T)$ and set $M=\| f\|_{\Lip(\T\times (0, T))}$.

Let $\Gamma_r$ be an approximation of the identity in $\Rr^2$ as $r\to 0$ and denote $h^{(r)}=h*\Gamma_r$ for any $h:\Rr^2\to \Rr$. For any $U\subset \Rr^2$, 
$$
\| h^{(r)}-h\|_{L^\infty(U)}\le r\|h\|_{\Lip(U+B_r(0))}.
$$

For $r\in (0,  r_0)$, let $g_r=f^{(r^2)}+(M+2)r^2$, so that $g_r-f\ge 2r^2$. By increasing $C$ if necessary, we can assume $C>4(M+2)$. Then for $|X-X_0|\in [r, r_0)$, we have
\bq\label{gr:psi}\begin{aligned}
\psi(x)-g_r(x)&\ge \na_Xf(X_0)(X-X_0)+C|X-X_0|^2-\big[f(X)+Mr^2+(M+2)r^2\big]\\
&\ge \overline{\psi}(X)-f(X)+\frac{C}{2}r^2-2(M+1)r^2\ge 2r^2.
\end{aligned}
\eq
Let $F(a, b)=\min\{a, b\}$. Note that $\|F\|_{\Lip(\Rr^2)}=1$ and $F^{(\delta)}(a, b)=a$ if $a<b-\delta$. In addition, $F^{(\delta)}(a, b)\le a^{(\delta)}=a$ and likewise $F^{(\delta)}(a, b)\le b$.

Define $\psi_r=F^{(r^2)}(g_r, \psi)$ in $B_r(X_0)$ and $\psi_r=g_r$ in $\Rr^2\setminus B_r(X_0)$. In fact, \eqref{gr:psi} shows that  $\psi(X)-g_r(X)\ge r^2$ in an open neighborhood of  $\p B_r(X_0)$, so that $\psi_r=g_r$ in the same neighborhood. It follows that $\psi_r$ is a smooth function on $\Rr^2$.  We have $\psi_r\le \psi$ in $B_r(X_0)$ and in view of \eqref{gr:psi}, $\psi_r(X)\le \psi(X)$ for $|X-X_0|\in [r, r_0)$. Consequently, $\psi_r\le \psi$ in $B_{r_0}(X_0)$. In $B_r(X_0)$, when $\psi_r(X)<\psi(X)$, we must have $\psi(X)\ge g_r(X)-r^2$, whence
\begin{align*}
&\psi_r(X)\ge \min\{g_r(X), \psi(X)\}-r^2\ge g_r(X)- 2r^2\ge f(X).
\end{align*}
Therefore, $\psi_r\ge f$ in $B_r(X_0)$.

Let $\delta \in (0, r_0^2)$ be arbitrarily small. Clearly,
\[
\lim_{r\to 0} f^{(r^2)}= f\quad\text{in}\quad C(\T\times [\delta, T-\delta]),\quad \sup_{r^2\in (0, \delta)}\| f^{(r^2)}\|_{\Lip(\T\times [\delta, T-\delta])}\le M.
\]
This together with the definition of $\psi_r$ yields
\[
\lim_{r\to 0} \psi_r= f\quad\text{in}\quad C(I\times [\delta, T-\delta]),\quad \sup_{r^2\in (0, \delta)}\| \psi_r\|_{\Lip(I\times [\delta, T-\delta])}<\infty,
\]
where $I=[x_0-\pi, x_0+\pi]$ and $X_0=(x_0, t_0)$.

We have proved that $\psi_r$ satisfies all the properties in \eqref{properties:psir} but with $\T$ replaced by $I$. To finish the proof, we note that $x_0$ is the midpoint of $I$ and $\psi_r=g_r$ for $|X-X_0|>r$. Therefore, since $g_r\in C^\infty(\T\times \Rr)$ and $I$ has length $2\pi$, we can periodize $\psi_r$ as
\[
\psi_r(x+k2\pi, t)=\psi_r(x, t)\quad\forall (x, t)\in I\times \Rr,\, \forall k\in \Zz
\]
and obtain $\psi_r\in C^\infty(\T\times \Rr)$.

\vspace{.1in}
\noindent{\bf{Acknowledgment.}} 
 H. Dong was partially supported by the Simons Foundation, grant no. 709545. F. Gancedo was partially supported by the ERC
through the Starting Grant project H2020-EU.1.1.-639227 and by the grant EUR2020-112271 (Spain).
 H. Q. Nguyen was partially supported by NSF grant DMS-190777. We would like to thank Nestor Guillen for insightful discussions on viscosity solutions for nonlocal equations.

\bibliographystyle{plain}
\bibliographystyle{amsplain}
\bibliography{references}

\end{document}